\def\cal{\mathcal}
\def\AA{{\Bbb A}}
\def\NN{{\Bbb N}}
\def\ZZ{{\Bbb Z}}
\def\11{{1\kern-3.5pt 1}}
\def\mumu{{\mu\kern-4.2pt\mu}}
\def\boxtimes{\setbox0\hbox{$\Box$}\copy0\kern-\wd0\hbox{$\times$}}
\newcommand{\lotimes}{\otimes^{\bf{L}}}
\def\diag{\operatorname {diag}}
\def\End{\operatorname {E nd}}
\def\Ext{\operatorname {Ext}}
\def\GL{\operatorname {GL}}
\def\Hom{\operatorname {Hom}}
\def\Ker{\operatorname {ker}}
\def\Spec{\operatorname {Spec}}
\def\Aut{\operatorname{Aut}}
\def\fchar{\operatorname{char}}
\def\depth{\operatorname{depth}}
\def\dim{\operatorname{dim}}
\def\End{\operatorname{End}}
\def\Ext{\operatorname{Ext}}
\def\uExt{\operatorname{\underline{Ext}}}
\def\gcd{\operatorname{gcd}}
\def\GL{\operatorname{GL}}
\def\gldim{\operatorname{gldim}}
\def\grmod{\operatorname{grmod}}
\def\GrAut{\operatorname{GrAut}}
\def\GrMod{\operatorname{GrMod}}
\def\Hom{\operatorname{Hom}}
\def\uHom{\operatorname{\underline{Hom}}}
\def\Im{\operatorname{Im}}
\def\injdim{\operatorname{injdim}}
\def\Ker{\operatorname{Ker}}
\def\lcm{\operatorname{lcm}}
\def\max{\operatorname{max}}
\def\mod{\operatorname{mod}}
\def\Proj{\operatorname{Proj}}
\def\sup{\operatorname{sup}}
\def\tails{\operatorname{tails}}
\def\Tails{\operatorname{Tails}}
\def\tors{\operatorname{tors}}
\def\Tors{\operatorname{Tors}}
\def\uEnd{\operatorname{\underline{End}}}
\def\uExt{\operatorname{\underline{Ext}}}
\def\uH{\operatorname{\underline{H}}}
\def\uHom{\operatorname{\underline{Hom}}}
\def\RuHom{\operatorname{R\underline{Hom}}}
\def\uG{\operatorname{\underline{\Gamma}}}
\def\R{\operatorname{R}}
\let\oldtext\text
\def\text#1{\oldtext{\normalshape #1}}
\def\a{\alpha}
\def\b{\beta}
\def\e{\epsilon}
\def\s{\sigma}
\def\t{\tau}
\def\L{\Lambda}
\def\fm{{\frak m}}
\def\cA{{\cal A}}
\def\cB{{\cal B}}
\def\cC{{\cal C}}
\def\cD{{\cal D}}
\def\cF{{\cal F}}
\def\cK{{\cal K}}
\def\cL{{\cal L}}
\def\cM{{\cal M}}
\def\cN{{\cal N}}
\def\cO{{\cal O}}
\def\cS{{\cal S}}
\def\SL{\operatorname{SL}}
\def\<{\langle}
\def\>{\rangle}
\def\CM{\operatorname {CM}}
\def\uCM{\underline {\operatorname
{CM}}}
\def\HSL{\operatorname{HSL}}
\def\hdet{\operatorname{hdet}}
\def\rad{\operatorname{rad}}
\newtheorem{lemma}{Lemma}[section]
\newtheorem{theorem}[lemma]{Theorem}
\newtheorem{corollary}[lemma]{Corollary}
\theoremstyle{definition}
\newtheorem{example}[lemma]{Example}
\newtheorem{definition}[lemma]{\sl Definition}
\theoremstyle{remark}
\newtheorem{remark}[lemma]{Remark}
\begin{document}
\pagenumbering{arabic}

\title[Ample Group Action on AS-regular Algebras]{Ample Group Action on AS-regular Algebras and Noncommutative Graded Isolated Singularities}

\author{Izuru Mori}

\address{Department of Mathematics, Faculty of Science, Shizuoka University, 836 Ohya, Suruga-ku, Shizuoka 422-8529, JAPAN}

\email{simouri@ipc.shizuoka.ac.jp}

\author{Kenta Ueyama}

\address{
Department of Mathematics,
Graduate School of Science,
Shizuoka University,
836 Ohya, Suruga-ku,
Shizuoka 422-8529, JAPAN}
\email{skueyam@ipc.shizuoka.ac.jp} 

\address{Current address of the second author:
Department of Mathematics,
Faculty of Education,
Hirosaki University,
1 Bunkyocho, Hirosaki,
Aomori 036-8560, JAPAN}
\email{k-ueyama@cc.hirosaki-u.ac.jp}

\keywords {AS-regular algebra,
group action,
graded isolated singularity,
skew group algebra,
derived equivalence}

\thanks {{\it 2010 Mathematics Subject Classification}: 14A22, 16W22, 16S35, 18E30}

\thanks {The first author was supported by Grant-in-Aid for Scientific
Research (C) 25400037.
The second author was supported by JSPS Fellowships for Young Scientists
No.\ 23-2233.}



\begin{abstract} In this paper, we introduce a notion of ampleness of a group action $G$ on a right noetherian graded algebra $A$, and show that it is strongly related to the notion of $A^G$ to be a graded isolated singularity introduced by the second author of this paper.  Moreover, if $S$ is a noetherian AS-regular algebra and $G$ is a finite ample group acting on $S$,
then we will show that ${\mathcal D}^b(\operatorname{tails} S^G)\cong {\cal D}^b(\operatorname{mod} \nabla S*G)$ where $\nabla S$ is the Beilinson algebra of $S$.
We will also explicitly calculate a quiver $Q_{S, G}$ such that ${\mathcal D}^b(\operatorname{tails} S^G)\cong {\mathcal D}^b(\operatorname{mod} kQ_{S, G})$ when $S$ is of dimension 2. 
\end{abstract}

\maketitle


\section{Introduction}


In commutative ring theory, regular rings, Gorenstein rings, and Cohen-Macaulay rings are fundamental classes of rings that are studied.  Since these classes of rings can be characterized by homological algebra, noncommutative (graded) generalizations of these rings, namely, AS-regular algebras, AS-Gorenstein algebras, and AS-Cohen-Macaulay algebras, have been defined and intensively studied in noncommutative algebraic geometry.  On the other hand, although isolated singularities are also an important class of rings in commutative ring theory, its notion is geometric in nature, so they have not been generalized nor studied well in noncommutative algebraic geometry.  Recently, the second author of this paper introduced a notion of graded isolated singularity for (noncommutative) graded algebras, which agrees with the usual notion of isolated singularity if the algebra is commutative and generated in degree 1, and found some nice properties of such algebras in \cite {U}.  This paper continues to study (noncommutative) graded isolated singularities. 

In commutative ring theory and representation theory of algebras, the fixed subalgebra $S^G$ of a polynomial algebra $S=k[x_1, \dots, x_d]$ by a finite subgroup $G\leq \GL(d, k)$ has been intensively studied.  In particular, it is known that $S^G$ is Gorenstein if and only if $G\leq \SL(d, k)$, and $S^G$ is an isolated singularity if and only if $G$ acts freely on $\AA^d\setminus \{0\}$.   Since an AS-regular algebra $S$ is a noncommutative generalization of a polynomial algebra, it is natural to study a fixed subalgebra $S^G$ of an AS-regular algebra $S$ by a finite group $G$ acting on $S$.   In \cite {JZ}, Jorgensen and Zhang introduced the notion of homological determinant for a graded algebra automorphism of $S$, which is the same as the usual determinant if $S$ is a polynomial algebra generated in degree 1, and showed that, for a noetherian AS-regular algebra $S$, if every element of $G$ has homological determinant 1, then $S^G$ is AS-Gorenstein.  In this paper, we will introduce a notion of ampleness of $G$ for $S$, and show 
that, for a noetherian AS-regular algebra $S$ of dimension $d\geq 2$ and a finite group $G$ consisting of elements of homological determinant 1, $G$ is ample for $S$ if and only if $S^G$ is a graded isolated singularity and $S*G\cong \End_{S^G}(S)$ (Theorem \ref{thm.ampeq}).  
If $S$ is a polynomial algebra generated in degree 1, then $S * G\cong \End_{S^G}(S)$
in the above setting by \cite [Theorem 3.2]{IT}, so $G$ is ample for $S$ if and only if $S^G$ is an isolated singularity in the usual sense (Corollary \ref{cor.ampeq}).  

Let $S=k[x_1, \dots, x_d]$ be a (weighted) polynomial algebra and $G\leq \GL(d, k)$ a finite subgroup acting on $S$.  The derived category $\cD^b(\tails S^G)$ and the graded stable category $\uCM ^{\ZZ}(S^G)$ have been studied in two situations, namely, when $S$ is generated in degree 1 in \cite {IT}, and when $G=\<\diag (\xi^{\deg x_1}, \dots, \xi^{\deg x_d})\>$ is a finite cyclic group where $\xi\in k^\times$ is a primitive $r$-th root of unity so that $S^G=S^{(r)}$ is the $r$-th Veronese algebra of $S$ in \cite {U1} and \cite [Section 5]{AIR} (see \cite {Mmc} when $S$ is AS-regular).  One of the motivations of this paper is to integrate both situations to find a common theory.  As one of such common theories, we show in this paper that $\cD^b(\Proj S^G):=\cD^b(\tails S^G)\cong \cD^b(\mod \nabla S*G)$ when $S$ is a noetherian AS-regular algebra of dimension $d\geq 2$ and $G$ is a finite ample group acting on $S$ where $\nabla S$ is the Beilinson algebra of $S$ (Theorem \ref{thm.tilt}).  As a main ingredient of the proof, we also show that if $S$ is a noetherian AS-regular algebra, then $S*G$ is a noetherian AS-regular algebra over $kG$ as introduced in \cite {MM} (Corollary \ref{cor.AS}).  
If $G$ is ample, then $S^G$ is a graded isolated singularity, so the noncommutative projective scheme $\Proj S^G$ associated to $S^G$ is smooth by definition.  It follows that $\Proj S^G$ itself is the minimal resolution of $\Proj S^G$, so the above derived equivalence can be thought of as a noncommutative projective version of (algebraic) McKay correspondence (see \cite{Mmc} for details).    

In the last section of this paper, we study the case when $S$ is a noetherian AS-regular algebra of dimension 2.  If $S=k[x, y]$ is the polynomial algebra and $G\leq \SL(2, k)$, then it is known that $S^G$ is an isolated singularity (so $G$ is ample for $S$ if $\deg x=\deg y=1$).  In this paper, we show that if $S\not \cong k\<x, y\>/(xy\pm yx)$, then every finite group $G$ consisting of elements of homological determinant 1 is ample for $S$ (Theorem \ref{thm.4.5}), and, in this case, we explicitly calculate a quiver $Q_{S, G}$ such that $\cD^b(\tails S^G)\cong \cD^b(\mod kQ_{S, G})$ (Theorem \ref{lem.QSG}).  We will continue to study  $\uCM ^{\ZZ}(S^G)$ in the subsequent paper.  

\smallskip \noindent {\it Acknowledgments:} We would like to thank Hideto Asashiba and Osamu Iyama for helpful discussions during the preparation of this paper.  

\subsection{Terminologies and Notations}

Throughout this paper, we fix a field $k$.
Unless otherwise stated, a graded algebra means an $\NN$-graded algebra $A =\bigoplus_{i\in\NN} A_i$ over $k$.
The group of graded $k$-algebra automorphisms of $A$ is denoted by
$\GrAut A$. 
We denote by $\GrMod A$ the category
of graded right A-modules, and by $\grmod A$ the full subcategory consisting of
finitely generated modules. Morphisms in $\GrMod A$ are right $A$-module homomorphisms of degree zero.
Graded left $A$-modules are identified with graded $A^o$-modules where $A^o$ is the opposite graded algebra of $A$.  For
$M \in \GrMod A$ and $n \in \ZZ$, we define $M_{\geq n}$ = $\bigoplus_{i\geq n} M_i \in \GrMod A$, and $M(n) \in
\GrMod A$ by $M(n) = M$ as an ungraded right $A$-module with the new grading
$M(n)_i = M_{n+i}$.
The rule $M \mapsto  M(n)$ is a
$k$-linear autoequivalence for $\GrMod A$ and $\grmod A$, called the shift functor.
For $M,N \in \GrMod A$, we write the vector space
$\Ext^i_A(M,N) = \Ext^i_{\GrMod A}(M,N)$
and the graded vector space
$$\uExt^i_A(M,N) :=\bigoplus_{n\in\ZZ}\Ext^i_A(M,N(n)).$$
For a graded right (resp. left) $A$-module $M$,
we denote by $DM:=\uHom _k(M, k)$ the dual graded vector space
of $M$. Note that $DM$ has a graded
left (resp. right) $A$-module structure.
Let $\s\in \GrAut A$ be a graded algebra
automorphism. For a graded right $A$-module $M\in \GrMod A$, we
define a new graded right $A$-module $M_{\s}\in \GrMod A$ by
$M_{\s}=M$ as a graded vector space with the new right action
$m*a:=m\s(a)$ for $m\in M$ and $a\in A$.  If $M$ is a graded $A$-$A$
bimodule, then $M_{\s}$ is also a graded $A$-$A$ bimodule by this
new right action.

We say that $M \in \GrMod A$ is torsion if, for any $m \in M$, there exists $n \in \NN$
such that $mA_{\geq n} = 0$. We denote by $\Tors A$ the full subcategory of $\GrMod A$
consisting of torsion modules.
We define the torsion functor $\uG_{\fm} : \GrMod A \to \Tors A$ by
$$\uG_{\fm}(M)= \lim _{n \to \infty}\uHom_A(A/A_{\geq n}, M).$$
We define the local cohomology modules of $M \in \GrMod A$ by
$$
\uH^i_\fm(M)= \R^i\uG_\fm(M)
= \lim _{n \to \infty} \uExt^i_A(A/A_{\geq n}, M),
$$
and the depth of $M$ by 
$$\depth M=\inf\R\uG_{\fm}(M)=\inf\{i\in \NN\mid \uH_{\fm}^i(M)\neq 0\}.$$ 

We write $\Tails A$ for the quotient category $\GrMod(A) / \Tors(A)$.
The quotient functor is denoted by $\pi :\GrMod A \to \Tails A$. 
The objects in $\Tails A$ will be denoted by script letters, like $\cM = \pi M$.
The shift functor on $\GrMod A$ induces an autoequivalence $(n) :\cM \mapsto \cM(n)$ on
$\Tails A$ which we also call the shift functor.
For $\cM,\cN \in \ \Tails A$, we write the vector space
$\Ext^i_{\cA}(\cM,\cN) = \Ext^i_{\Tails A}(\cM,\cN)$
and the graded vector space
$$\uExt^i_{\cA}(\cM,\cN) :=\bigoplus_{n\in\ZZ}\Ext^i_{\cA}(\cM,\cN(n)).$$
When $A$ is right noetherian, 
we define $\tails A := \grmod A/\tors A$ where $\tors A = \Tors A \cap \grmod A$. 
We define the global dimension of $\tails A$ by
$$ \gldim (\tails A) = \sup\{i \mid \Ext^i_{\cA}(\cM, \cN) \neq0\; \textnormal{for some}\;\cM, \cN \in \tails A \}.$$
It is easy to see that if $A$ has finite global dimension,
then $\tails A$ has finite global dimension.

\begin{definition} (\cite [Definition 2.2]{U})
A right noetherian graded algebra $A$ is called a graded isolated singularity
if $\gldim (\tails A)< \infty$.
\end{definition}

If $A$ is a commutative graded algebra finitely generated in degree 1, then $A$ is graded isolated singularity if and only if $A$ is an isolated singularity in the usual sense.

If $A_0 = k$, then we say that $A$ is connected graded.
Let $A$ be a noetherian connected graded algebra.
Then we view $k = A/ A_{\geq 1} \in \GrMod A$
as a graded A-module.

\begin{definition} \label{def.as} 
A 
connected graded algebra $A$ is called an AS-Gorenstein (resp. AS-regular) algebra of dimension $d$ and of Gorenstein parameter $\ell$ if
\begin{itemize}
\item{} $\injdim_A A = \injdim_{A^o} A= d <\infty$ (resp. $\gldim A=d<\infty$), and
\item{} $\uExt^i_A(k ,A) \cong \uExt^i_{A^o}(k ,A) \cong
\begin{cases}
k(\ell) & \textnormal { if }\; i=d, \\
0 & \textnormal { if }\; i\neq d.
\end{cases}$
\end{itemize}
\end{definition}



If $A$ is a noetherian AS-Gorenstein algebra of dimension $d$ and of Gorenstein
parameter $\ell$, then $\uH_{\fm}^i(A)=0$ for all $i\neq d$,
and $\uH^d_\fm(A) \cong DA(\ell)$ as a graded right and left module.
We say that $M \in \grmod A$ is graded maximal Cohen-Macaulay if 
$\uH_{\fm}^i(M)=0$ for all $i\neq d$.
We denote by $\CM^{\ZZ} (A)$ the full subcategory of $\grmod A$
consisting of graded maximal Cohen-Macaulay modules.

Let $A$ be a graded algebra, $\s \in \GrAut A$, and $M, N \in \GrMod A$.
A $k$-linear graded map $f: M\rightarrow N$ is called $\s$-linear if
$f: M\rightarrow N_{\s}$
is a graded $A$-module homomorphism.
If $A$ is AS-Gorenstein,
then by \cite[Lemma 2.2]{JZ}, $\s: A \to A$ induces a $\s$-linear map
$\uH^i_\fm(\s): \uH_{\fm}^i(A) \rightarrow \uH^i_\fm(A)$.
Moreover, there exists a constant $c \in k^{\times}$ such that 
$\uH^d_\fm(\s): \uH^d_\fm(A) \rightarrow \uH^d_\fm(A)$
is equal to 
$ cD(\s^{-1}): DA(\ell) \to DA(\ell).$
The constant $c^{-1}$ is called the homological determinant of $\s$,
and we denote $\hdet \s = c^{-1}$ (see \cite[Definition 2.3]{JZ}).
By \cite[Lemma 2.5]{JZ}, $\hdet$ defines a group homomorphism $\GrAut A \to k^{\times}$.
We define the homological special linear group on $A$ by the kernel of $\hdet$
$$\HSL(A)= \{ \s \in \GrAut A \mid \hdet \s =1\}.$$

If $S$ is a commutative AS-regular algebra of dimension $d$, then $S=k[x_1, \dots, x_d]$ is a polynomial algebra.  If it is generated in degree 1, then 
$\GrAut S=\GL(d, k)$ and $\hdet $ coincides with the usual determinant of a matrix so that $\HSL(S)=\SL(d, k)$. 

\begin{theorem} \textnormal{(\cite[Theorem 3.3]{JZ})} \label{thm.asg}
If $A$ is a noetherian AS-Gorenstein algebra of dimension $d$ and of Gorenstein parameter $\ell$, and $G\leq \HSL(A)$ is a finite subgroup, 
then the fixed subalgebra $A^G$ is a noetherian AS-Gorenstein algebra of dimension $d$ and of Gorenstein parameter $\ell$.
\end{theorem}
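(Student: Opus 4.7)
The plan is to verify the AS-Gorenstein property for $A^G$ by computing its graded local cohomology modules directly and then translating back to the $\uExt$ characterization in Definition \ref{def.as} via the standard noncommutative graded local duality. I work under the implicit hypothesis $\fchar k\nmid |G|$, which makes the Reynolds operator $\rho\colon A\to A^G,\ \rho(a)=\frac{1}{|G|}\sum_{g\in G}g(a)$ a splitting of the inclusion $A^G\hookrightarrow A$ in $\GrMod A^G$ and in $\GrMod (A^G)^o$; in particular $A\cong A^G\oplus \ker \rho$ as graded $A^G$-bimodules.

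First I would establish that $A^G$ is noetherian and that $A$ is finitely generated as a graded $A^G$-module on each side; the standard argument uses the skew group algebra $A*G$ (which is noetherian since $A$ is) together with the corner identification $A^G\cong e(A*G)e$ where $e=\frac{1}{|G|}\sum_g g\in kG$, combined with the Reynolds splitting above. With this in hand one shows that the torsion functors $\uG_{\fm_A}$ and $\uG_{\fm_{A^G}}$ agree on $\GrMod A$: since $\fm_{A^G}\subseteq \fm_A$, one inclusion of torsion classes is immediate, while module-finiteness of $A$ over $A^G$ yields $A_{\geq N}\subseteq A\cdot \fm_{A^G}^{\,n}$ for suitable $N\gg n$, giving the other. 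Because taking $G$-invariants is exact and commutes with filtered colimits when $|G|$ is invertible, one obtains
$$\uH^i_{\fm}(A^G)\cong \uH^i_{\fm}(A)^G$$
for every $i$, with $G$ acting on $\uH^i_{\fm}(A)$ by functoriality.

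By the AS-Gorenstein hypothesis on $A$, $\uH^i_\fm(A)=0$ for $i\neq d$ and $\uH^d_\fm(A)\cong DA(\ell)$, so $\uH^i_\fm(A^G)=0$ for $i\neq d$. For $i=d$, the discussion of the homological determinant recalled before the theorem identifies $\uH^d_\fm(g)$ with $\hdet(g)\cdot D(g^{-1})$ under the isomorphism $\uH^d_\fm(A)\cong DA(\ell)$; since $\hdet(g)=1$ for every $g\in G$ by the assumption $G\leq \HSL(A)$, the induced $G$-action on $DA(\ell)$ is just the standard dual action $g\cdot f=f\circ g^{-1}$. Using the Reynolds splitting one identifies $(DA)^G\cong D(A^G)$ as graded $A^G$-bimodules, hence
$$\uH^d_\fm(A^G)\cong D(A^G)(\ell).$$
The noncommutative graded local duality theorem for connected graded noetherian algebras (Van den Bergh, Yekutieli) then translates these local-cohomology statements into the required $\uExt$ formulas of Definition \ref{def.as} and gives $\injdim A^G=\injdim (A^G)^o=d<\infty$, completing the verification; the Gorenstein parameter is again $\ell$.

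The main obstacles I anticipate are twofold. First, matching the two torsion functors requires knowing that $A$ is module-finite over $A^G$; in the noncommutative setting this is not as immediate as in the commutative case, but follows from the idempotent-corner description combined with noetherianity of $A*G$. Second, and more delicate, is tracking the $G$-equivariance of the isomorphism $\uH^d_\fm(A)\cong DA(\ell)$: one must check that the automorphism of $\uH^d_\fm(A)$ induced by $g$ really does correspond to $\hdet(g)D(g^{-1})$ with the correct twist conventions, which is exactly where the homological determinant hypothesis enters. Once these are handled, the remaining verifications are formal.
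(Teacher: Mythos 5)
The paper does not prove this statement at all --- it is quoted directly from J\o rgensen--Zhang \cite[Theorem 3.3]{JZ} --- and your argument is essentially a faithful reconstruction of their proof: module-finiteness of $A$ over $A^G$ via $A*G$ and the Reynolds operator, agreement of the torsion functors, exactness of $(-)^G$ giving $\uH^i_{\fm}(A^G)\cong \uH^i_{\fm}(A)^G$, the hypothesis $G\leq\HSL(A)$ identifying $(DA(\ell))^G$ with $D(A^G)(\ell)$, and local duality to return to the $\uExt$ conditions. The only points worth making explicit in a full write-up are that the last step needs a balanced dualizing complex for $A^G$ (which exists because the relevant $\chi$-conditions and finiteness of cohomological dimension descend from $A$, as $A$ is module-finite over $A^G$ on both sides with $A^G$ a direct summand) and that the identification $\uH^d_{\fm}(A^G)\cong D(A^G)(\ell)$ must be checked as graded bimodules, not merely one-sided modules, so that the resulting dualizing complex is an invertible twist of $A^G(-\ell)[d]$.
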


\subsection{Conventions on Group Actions}

In this subsection, we give some conventions on group actions on algebras used in this paper. 
Let $A$ be a graded algebra and $G\leq \GrAut A$ a
finite subgroup.   We will always assume that $\fchar k$ does not divide $|G|$ so that $e:=\frac{1}{|G|}\sum _{g\in
G}1*g\in A*G$ is a well-defined idempotent.  Note that this
condition is equivalent to the condition that $kG$ is semi-simple.  
Note also that $A^G$ and $A*G$ are graded by 
$(A^G)_i=A^G\cap A_i$ and $(A*G)_i=A_i\otimes _kkG$ for $i\in \NN$.  

The following lemma is well-known.  These identifications of algebras and modules play essential roles in this paper.

\begin{lemma}  \label{lem.eGe}\; \label{lem.right}
Let $A$ be a graded algebra and $G\leq \GrAut A$ a finite subgroup.  Suppose that
$\fchar k$ does not divide $|G|$ so that $e:=\frac{1}{|G|}\sum _{g\in G}1*g\in
A*G$ is well-defined.  
\begin{enumerate}
\item{} The map $\varphi: A^G\to e(A*G)e$ defined by $\varphi (c)=e(c*1)e$
is an isomorphism of graded algebras. 
\item{} The map $\psi: A\to (A*G)e$
defined by $\psi (a)=(a*1)e$ is an isomorphism of graded right $A^G$-modules where the right $A^G$-module structure on $(A*G)e$ is given by identifying $A^G$ with $e(A*G)e$ via $\varphi$. 
\item{} 
The map $\phi :A\to e(A*G)$ defined by $\phi(a)=e(a*1)$ is an isomorphism of graded left $A^G$-modules where the left $A^G$-module structure on $e(A*G)$ is given by identifying $A^G$ with $e(A*G)e$ via $\varphi$. 
\end{enumerate}
\end{lemma}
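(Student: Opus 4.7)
The plan is to reduce everything to two elementary identities in $A*G$. First, from $e=\tfrac{1}{|G|}\sum_{h\in G}1*h$ and reindexing the sum, $(1*g)e=e(1*g)=e$ for every $g\in G$. Second, for $c\in A^G$ the element $c*1$ commutes with $e$, since $(1*h)(c*1)=h(c)*h=c*h=(c*1)(1*h)$; equivalently $e(c*1)=(c*1)e$, which combined with $e^2=e$ yields the simplification $e(c*1)e=(c*1)e=e(c*1)$. From $(1*g)e=e$ one also deduces $(a*g)e=(a*1)e$ for all $a\in A$ and $g\in G$.

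With these in hand, (1) unravels quickly. Gradedness is automatic. Multiplicativity is the one-line calculation $\varphi(c)\varphi(d)=(c*1)e(d*1)e=(c*1)(d*1)e^2=\varphi(cd)$. Injectivity follows from $(c*1)e=\tfrac{1}{|G|}\sum_g c*g$ together with the direct sum decomposition $A*G=\bigoplus_{g\in G}A*g$. For surjectivity, a general element $e\xi e$ with $\xi=\sum_g a_g*g$ collapses via $(a*g)e=(a*1)e$ to $\sum_g e(a_g*1)e$, and direct expansion gives $e(a*1)e=\varphi\bigl(\tfrac{1}{|G|}\sum_g g(a)\bigr)$, with $\tfrac{1}{|G|}\sum_g g(a)\in A^G$ providing the preimage.

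For (2) and (3), each map is graded and $k$-linear by inspection. Injectivity of $\psi$ and $\phi$ follows from the expansions $\psi(a)=\tfrac{1}{|G|}\sum_g a*g$ and $\phi(a)=\tfrac{1}{|G|}\sum_h h(a)*h$ by comparing coefficients in the decomposition $A*G=\bigoplus_g A*g$. Surjectivity of $\psi$ uses the same trick as in (1): any $\xi e=\sum_g(a_g*g)e=\sum_g(a_g*1)e=\psi\bigl(\sum_g a_g\bigr)$. For $\phi$, a reindexing yields $e(a*g)=\phi(g^{-1}(a))$, so every $e\xi$ lies in the image. The $A^G$-module compatibilities amount to the two calculations $\psi(a)\varphi(c)=(a*1)e(c*1)e=(ac*1)e=\psi(ac)$ and $\varphi(c)\phi(a)=e(c*1)e(a*1)=(c*1)e(a*1)=\phi(ca)$, each an immediate consequence of $e(c*1)=(c*1)e$.

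There is no genuine obstacle; the whole proof is bookkeeping around the two opening identities. The only place requiring attention is that in (2) and (3) the $A^G$-actions on $(A*G)e$ and $e(A*G)$ are defined through $\varphi$, so the final compatibility computations must be read as showing that the native $A^G$-structure on $A$ is transported under $\psi$ and $\phi$ precisely to the structure inherited via $\varphi$, and not to some other a priori plausible action.
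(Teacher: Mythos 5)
Your proof is correct: all the identities you use ($e^2=e$, $(1*g)e=e(1*g)=e$, $e(c*1)=(c*1)e$ for $c\in A^G$, $(a*g)e=(a*1)e$, $e(a*g)=\phi(g^{-1}(a))$) check out under the paper's multiplication convention $(a*g)(b*h)=a\,g(b)*gh$, and the injectivity/surjectivity and module-compatibility arguments all go through. The paper itself records this lemma as ``well-known'' and supplies no proof, so there is nothing to compare against; your direct computation is exactly the standard argument one would expect.
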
 

We also endow a right $A*G$-module structure on
$A$ by identifying $A$ with $e(A*G)$ via the above $\phi$ so that, for $a\in A, b*g\in A*G$, $a\cdot (b*g)=g^{-1}(ab)$.  
By Lemma \ref{lem.eGe}, 
$$A^G\cong
e(A*G)e\cong \uEnd _{A*G}(e(A*G))\cong \uEnd _{A*G}(A)$$ as algebras.  We will see in this paper that
$$A*G\cong \uEnd _{A^G}((A*G)e)\cong \uEnd_{A^G}(A)$$
as algebras in some nice situations (algebraic McKay correspondence).

\section{Ampleness} 

We first review the notion of ampleness introduced in \cite {AZ}.  An algebraic triple $(\cC, \cO, s)$ consists of a  
$k$-linear abelian category $\cC$, an object $\cO\in \cC$, and a
$k$-linear autoequivalence $s\in \Aut _k\cC$. Given a right noetherian graded algebra $A$, the noncommutative projective scheme associated to $A$ is defined by the algebraic triple $\Proj A=(\tails A, \cA, (1))$.  It is important to find a better homogeneous coordinate ring of $\Proj A$, that is, to find a better graded algebra $B$ (eg. $\gldim B<\infty$) such that 
$\tails B\cong \tails A$.  For a suitable choice of an ample pair $(\cO, s)$ for $\tails A$ defined below, the graded algebra $B=B(\tails A, \cO, s)$ constructed in Definition \ref{def.B} may be a better homogeneous coordinate ring than $A$. 

\begin{definition} (\cite{AZ})
Let $(\cC, \cO, s)$ be an algebraic triple.  We say that the pair
$(\cO, s)$ is ample for $\cC$
if \\

\noindent (A1) for every object $\cM\in \cC$, there are positive integers
$r_1, \dots , r_p\in \NN^+$ and an epimorphism $\bigoplus
_{i=1}^ps^{-r_i}\cO\to \cM$ in $\cC$, and \\

\noindent (A2) for every epimorphism $\cM\to \cN$ in $\cC$, there is an
integer $n_0$ such that the induced map $\Hom _{\cC}(s^{-n}\cO,
\cM)\to \Hom _{\cC}(s^{-n}\cO, \cN)$ is surjective for every $n\geq
n_0$.
\end{definition}

\begin{definition} \label{def.B}
We define the graded
algebra associated to an algebraic triple $(\cC, \cO, s)$ by
$$B(\cC, \cO, s):=\bigoplus _{i\in \ZZ}\Hom_{\cC}(\cO, s^i\cO).$$
\end{definition}

This gives a functor from the category of algebraic triples to the
category of graded algebras.  
Note that $\bigoplus _{i\in \ZZ}\Hom_{\cC}(\cO, s^i\cF)$ has a natural graded right $B(\cC, \cO,
s)$-module structure for any object $\cF\in \cC$.  

The $\chi$-condition below plays an essential role in noncommutative algebraic geometry.  

\begin{definition} (\cite {AZ})
We say that a right noetherian graded algebra $A$ satisfies $\chi_i$
if $\uExt^j_A(T, N)$ are finite dimensional for all $T\in \tors A$,
$N\in \grmod A$ and $0\leq j\leq i$.
\end{definition}

It is known that every noetherian AS-Gorenstein algebra satisfies $\chi_i$ for all $i\in \NN$.  The theorem below justifies the notion of ampleness.  

\begin{theorem} \textnormal{(\cite[Corollary 4.6 (1)]{AZ})} \label{thm.AZ}
Let $(\cC, \cO, s)$ be an
algebraic triple.  If $\cO\in \cC$ is a noetherian object, $\dim _k\Hom_{\cC}(\cO, \cM)<\infty$ for all $\cM\in \cC$, and $(\cO, s)$ is ample for $\cC$, then
$B:=B(\cC, \cO, s)_{\geq 0}$ is a locally finite right noetherian graded algebra satisfying $\chi_1$, and 
the functor
$$\cC\to \tails B; \quad \cF\mapsto \pi \left(\bigoplus _{i\in
\NN}\Hom_{\cC}(\cO, s^i\cF) \right)$$ induces an equivalence of algebraic
triples $(\cC, \cO, s)\to (\tails B, \cB, (1))=\Proj B$.
\end{theorem}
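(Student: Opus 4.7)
The plan is to construct an explicit functor $\Gamma : \cC \to \GrMod B$ such that $\pi\Gamma$ serves as the claimed equivalence, and to derive the algebraic properties of $B$ in parallel.  Set
$$\Gamma(\cF) := \bigoplus_{i\geq 0} \Hom_\cC(\cO, s^i \cF),$$
with the graded right $B$-action given by $f \cdot b := s^j(f) \circ b$ for $f \in \Gamma(\cF)_i$ and $b \in B_j = \Hom_\cC(\cO, s^j\cO)$, so in particular $\Gamma(\cO) = B$.  The hypothesis $\dim_k \Hom_\cC(\cO, \cM) < \infty$ applied to $\cM = s^i\cO$ immediately gives local finiteness of $B$.

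The heart of the argument will be a structural lemma: for any graded submodule $N \subseteq \Gamma(\cF)$, there exists a subobject $\cN \subseteq \cF$ such that $\Gamma(\cN)$ and $N$ agree in all sufficiently large degrees.  I would prove this by defining $\cN_r \subseteq \cF$ as the image of the natural map $\bigoplus s^{-d}\cO \to \cF$ assembled from the homogeneous elements of $N$ of degree $\leq r$; axiom (A1) implies that $\cF$, being a quotient of a finite direct sum of shifts of $\cO$, is noetherian, so the ascending chain $\{\cN_r\}$ stabilises to some $\cN \subseteq \cF$; axiom (A2) applied to $\cN \hookrightarrow \cF$ and to $\cF/\cN$ then forces $N$ and $\Gamma(\cN)$ to coincide in all sufficiently high degrees.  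Specialising to $\cF = \cO$ shows that every graded right ideal of $B$ is finitely generated, yielding right noetherianity.  The $\chi_1$ condition then reduces by devissage to finite-dimensionality of $\uHom_B(B/B_{\geq n}, N)$ for $N \in \grmod B$, which follows from the local finiteness of $B$ together with that of $N$.

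With these properties in hand, essential surjectivity of $\pi\Gamma$ is mostly formal: any $\cM \in \tails B$ lifts to a finitely presented $M \in \grmod B$ with a finite presentation $\bigoplus_j B(-s_j) \to \bigoplus_i B(-r_i) \to M \to 0$; under the natural identification $\Hom_\cC(s^{-r}\cO, s^{-s}\cO) \cong B_{s-r}$, this lifts to a two-term complex in $\cC$ whose cokernel $\cF$ satisfies $\pi\Gamma(\cF) \cong \cM$.  For full faithfulness, injectivity of the map $\Hom_\cC(\cF,\cG) \to \Hom_{\tails B}(\pi\Gamma\cF, \pi\Gamma\cG)$ follows from (A1) applied to $\cF$ (a morphism killed by $\pi\Gamma$ vanishes after precomposition with every $s^{-r}\cO \to \cF$), while surjectivity uses (A2), reconstructing a morphism in $\cC$ from its high-degree data on the $B$-module side via the structural lemma.

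The main obstacle will be the structural lemma itself.  One must carefully manage the interplay between (A1), which supplies generators via ambient maps out of shifts of $\cO$, and (A2), which controls morphism surjectivity only in sufficiently high degree; the noetherian hypothesis on $\cO$ is invoked precisely to force stabilisation of the chain $\{\cN_r\}$.  Once this bridge between the categorical data in $\cC$ and the graded module theory of $B$ is established, the remaining assertions of the theorem are essentially formal consequences.
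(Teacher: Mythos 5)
This statement is quoted verbatim from Artin--Zhang (\cite[Corollary 4.6(1)]{AZ}); the paper gives no proof of it, so there is no internal argument to compare against. Measured against the actual proof in \cite{AZ}, your outline does track the right strategy: the section functor $\Gamma$, the correspondence between graded submodules of $\Gamma(\cF)$ and subobjects of $\cF$ up to high degree (your ``structural lemma'' is essentially their Theorem 4.5 mechanism), noetherianity of $B$ by specialising to $\cF=\cO$, and the equivalence via lifting finite presentations. Two small points of care: (A2) is an assertion about epimorphisms, so it cannot be ``applied to $\cN\hookrightarrow\cF$''; the correct move is to apply it to the epimorphism $\bigoplus s^{-d_i}\cO\to\cN$ assembled from the chosen elements of $N$ (to get $\Gamma(\cN)_n\subseteq N_n$ for $n\gg 0$) and to $\cF\twoheadrightarrow\cF/\cN$ (to get the reverse containment). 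Also, for faithfulness you need covers of $\cF$ by shifts $s^{-r}\cO$ with $r$ arbitrarily large, which is a consequence of (A1) and (A2) together that has to be extracted, not an instance of (A1) as stated.

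The genuine gap is the $\chi_1$ verification. You reduce $\chi_1$ by devissage to finite-dimensionality of $\uHom_B(B/B_{\geq n},N)$ and assert this follows from local finiteness of $B$ and $N$. First, even that reduction target needs right noetherianity (so that the torsion submodule of $N$ is finitely generated, hence right bounded); local finiteness alone only bounds each graded piece and does not prevent the graded Hom from being nonzero in infinitely many degrees. More importantly, $\chi_1$ also demands finite-dimensionality of $\uExt^1_B(T,N)$, and this does \emph{not} reduce to $\uHom_B(B/B_{\geq n},N)$: from $0\to B_{\geq n}\to B\to B/B_{\geq n}\to 0$ it reduces instead to finite-dimensionality of the cokernel of $N\to\uHom_B(B_{\geq n},N)$, which in the limit is the statement that $\bigoplus_i\Hom_{\cC}(\cO,s^i(\pi N))/N$ is right bounded. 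That is the substantive analytic content of $\chi_1$ (it is the characterisation in \cite[Proposition 3.14]{AZ}) and it is proved using the ampleness axioms and the subobject--submodule correspondence, not by a formal local-finiteness argument. As written, this step of your proposal would fail; the fix is to route the $\uExt^1$ computation back through your structural lemma.
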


\subsection{Tails of a Graded Endomorphism Algebra} 
In this subsection, we will consider a Morita type question, that is, finding a condition on $M\in \grmod A$ such that $\tails \uEnd_A(M)\cong \tails A$.  
Since $B(\grmod A, M, (1))=\uEnd _A(M)$ as graded
algebras, the answer is given by the ampleness of $(\cM, (1))$ for $\tails A$.  

Let $A, B$ be graded algebras and $M$ a graded $A$-$B$ bimodule.  We write 
\begin{align*}
-\otimes _AM:\Tails A\to \Tails B  \\
\uHom_B(M, -):\Tails B\to \Tails A
\end{align*}
for the functors induced by the functors
\begin{align*}
-\otimes _AM:\GrMod A\to \GrMod B  \\
\uHom_B(M, -):\GrMod B\to \GrMod A
\end{align*}
by abuse of notations. 

\begin{theorem} \label{thm.aend}  
Let $A$ be a noetherian AS-Gorenstein graded isolated singularity of dimension $d\geq 2$, and $M\in \CM^{\ZZ}(A)$.  If $M$ contains $A$ as a direct summand, then $\uHom_A(M, -):\tails A\to \tails \uEnd_A(M)$ is an equivalence functor. 
\end{theorem}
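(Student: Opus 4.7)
The plan is to apply Theorem \ref{thm.AZ} to the algebraic triple $(\tails A, \cM, (1))$, where $\cM := \pi M$, and then separately identify the resulting graded algebra with $\Lambda := \uEnd_A(M)$ modulo tails. Noetherianness of $\cM$ and finite-dimensionality of $\Hom_{\cA}(\cM, \cF)$ for all $\cF \in \tails A$ are inherited from $M \in \grmod A$ together with the fact that the AS-Gorenstein algebra $A$ satisfies $\chi$, so the real content is verifying that $(\cM, (1))$ is ample for $\tails A$.

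Condition (A1) is immediate from the hypothesis that $M$ contains $A$ as a direct summand: $\cA$ is then a direct summand of $\cM$, and since $(\cA, (1))$ is already ample for $\tails A$ by Serre's theorem (using $\chi$ for $A$), every covering $\bigoplus_i \cA(-r_i) \to \cF$ pulls back through the split projection $\bigoplus_i \cM(-r_i) \to \bigoplus_i \cA(-r_i)$ to a covering by shifts of $\cM$. Condition (A2) is the main obstacle. To verify it, I would first combine $M \in \CM^{\ZZ}(A)$ with AS-Gorenstein local duality to deduce $\uExt^i_A(M, A) = 0$ for all $i \geq 1$; indeed, $\uExt^i_A(M, A)$ is (up to a twist) dual to $\uH^{d-i}_{\fm}(M)$, which vanishes for $i \neq 0$ when $M$ is MCM. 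Combined with the Artin--Zhang comparison $\Ext^i_A(M, A(n)) \cong \Ext^i_{\cA}(\cM, \cA(n))$ for $n \gg 0$ (valid because $A$ satisfies $\chi$ and $M$ is finitely generated), this forces $\uExt^i_{\cA}(\cM, \cA)_n = 0$ for $n \gg 0$ and every $i \geq 1$. Because $A$ is a graded isolated singularity, $\tails A$ has finite global dimension, so every noetherian $\cK \in \tails A$ admits a finite resolution by finite direct sums of shifts of $\cA$; chasing the above vanishing through this resolution yields $\uExt^1_{\cA}(\cM, \cK)_n = 0$ for $n \gg 0$, which is precisely (A2).

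With ampleness established, Theorem \ref{thm.AZ} supplies an equivalence $\tails A \simeq \tails B$ where $B = \bigoplus_{n \geq 0} \Hom_{\cA}(\cM, \cM(n))$, implemented by $\cF \mapsto \pi \bigoplus_{n \geq 0} \Hom_{\cA}(\cM, \cF(n))$. The natural graded algebra homomorphism $\Lambda \to \uEnd_{\cA}(\cM)$ is an isomorphism in all sufficiently high degrees (again via the Artin--Zhang comparison using $\chi_1$), so $\tails \Lambda \cong \tails B$, and the composite equivalence $\tails A \simeq \tails \Lambda$ is implemented by $\uHom_A(M, -)$, as desired. The most delicate step is (A2): one must interlock the MCM hypothesis, AS-Gorenstein local duality, the $\chi$-condition, and the finite global dimension of $\tails A$ in order to bound $\uExt^1_{\cA}(\cM, \cK)$ above in degree for every noetherian $\cK$; the hypothesis $d \geq 2$ is used implicitly to keep the $\chi$-style comparisons in high degree non-vacuous.
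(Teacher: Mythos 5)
Your overall architecture --- verify that $(\cM,(1))$ is ample for $\tails A$ and then invoke Theorem \ref{thm.AZ} together with an identification of $B(\tails A,\cM,(1))$ with $\uEnd_A(M)$ --- is exactly the paper's, and your treatment of (A1) and of the final identification is essentially in line with it (the paper is more careful about why $\uEnd_A(M)$ is right noetherian, which you need in order to pass from $\tails \uEnd_A(M)_{\geq 0}$ to $\tails\uEnd_A(M)$, but that is routine). The genuine gap is in (A2). You assert that, because $\tails A$ has finite global dimension, every noetherian object $\cK$ admits a finite resolution by finite direct sums of shifts of $\cA$. That is false in this generality: finite global dimension of the quotient category does not make the objects $\cA(-i)$ projective in $\tails A$, and producing such resolutions by sheafifying graded free resolutions would require $\gldim A<\infty$, which is not assumed. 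Concretely, take $A=k[x,y]^{(2)}\cong k[u,v,w]/(uw-v^2)$, a noetherian AS-Gorenstein graded isolated singularity of dimension $2$ with $\tails A\cong \coh\PP^1$ and $\cA(i)$ corresponding to $\cO_{\PP^1}(2i)$. The object $\cO_{\PP^1}(1)$ admits no finite resolution by finite direct sums of the $\cO_{\PP^1}(2i)$: its class in $K_0(\PP^1)$ has odd degree, whereas every integer combination of the classes $[\cO_{\PP^1}(2i)]$ has even degree.

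The conclusion you want --- $\uExt^1_{\cA}(\cM,\cK)$ right bounded for every noetherian $\cK$ --- is nevertheless true, and there are two ways to repair the argument. Your vanishing $\uExt^i_A(M,A)=0$ for $i\geq 1$ can be propagated by a descending induction on $i$ starting at $i=\gldim(\tails A)+1$, using only epimorphisms $\bigoplus_j\cA(-r_j)\twoheadrightarrow\cK$ (which exist by ampleness of $(\cA,(1))$) and the long exact sequence applied to their kernels; no finite resolution is needed. The paper's own route is shorter and stays at the module level: it quotes \cite[Lemma 5.7]{U}, which asserts that for a noetherian AS-Gorenstein graded isolated singularity $A$ and $M\in\CM^{\ZZ}(A)$ the space $\uExt^1_A(M,K)$ is finite dimensional for \emph{every} $K\in\grmod A$, and then transfers this to right-boundedness of $\uExt^1_{\cA}(\cM,\cK)$ via \cite[Corollary 7.3 (2)]{AZ}. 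That lemma is precisely where the isolated-singularity hypothesis does its work, and your proposal as written does not supply a valid substitute for it.
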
  

\begin{proof} 
Note that $\cA\in \tails A$ is a noetherian object.  Since $A$ is a locally finite right noetherian graded algebra satisfying $\chi_1$, we have that $\dim _k\Hom_{\cA}(\cA, \cM)<\infty$ for all $\cM\in \tails A$ by \cite [Corollary 4.6 (2)]{AZ}.  We will now show that $(\cM, (1))$ is ample for $\tails A$.  Since $\cM$ contains $\cA$ as a direct summand, it is easy to see that the condition (A1) is satisfied.  Note that every exact sequence $0\to \cK\to \cL\to \cN\to 0$ in $\tails A$ is induced by an exact sequence $0\to K\to L\to N\to 0$ in $\grmod A$.   Consider the exact sequence 
$$\uHom_A(M, L)\to \uHom_A(M, N)\to \uExt^1_A(M, K).$$  Since $A$ is a noetherian AS-Gorenstein graded isolated singularity and $M\in \CM^{\ZZ}(A)$, it follows that $\uExt^1_A(M, K)$ is finite dimensional over $k$ by \cite [Lemma 5.7]{U}, so $\uExt^1_{\cA}(\cM, \cK)$ is right bounded by \cite [Corollary 7.3 (2)]{AZ}.
It follows that 
$$\Ext^1_{\cA}(\cM(-n), \cK)=\uExt^1_{\cA}(\cM, \cK)_n=0$$
for all $n\gg 0$.  Since 
$$\Hom_{\cA}(\cM(-n), \cL)\to \Hom_{\cA}(\cM(-n), \cN)\to \Ext^1_{\cA}(\cM(-n), \cK)$$ is exact, $\Hom_{\cA}(\cM(-n), \cL)\to \Hom_{\cA}(\cM(-n), \cN)$ is surjective for all $n\gg 0$, so the condition (A2) is also satisfied, hence $(\cM, (1))$ is ample for $\tails A$.

Since $\depth _AM=d\geq 2$, 
$$\uEnd_A(M)=B(\grmod A, M, (1))\cong B(\tails A, \cM, (1))$$
by \cite [Lemma 3.3]{Mmc}.  By Theorem \ref{thm.AZ}, $\uEnd _A(M)_{\geq 0}\cong  B(\tails A, \cM, (1))_{\geq 0}$
is locally finite right noetherian and the functor 
$$\pi \circ \uHom_{\cA}(\cM, -):\tails A\to \tails \uEnd_A(M)_{\geq 0}$$ is an equivalence functor.  Since $\uEnd_A(M)$ is locally finite and left bounded by \cite [Proposition 3.1]{AZ}, $\uEnd _A(M)/\uEnd_A(M)_{\geq 0}$ is finite dimensional over $k$, so $\uEnd _A(M)$ is finitely generated as a graded right $\uEnd_A(M)_{\geq 0}$-module.  It follows that $\uEnd _A(M)$ is right noetherian, so the natural functor $\tails \uEnd _A(M)\to  \tails \uEnd_A(M)_{\geq 0}$ is an equivalence functor by \cite [Proposition 2.5 (2)]{AZ}, hence the functor 
$$\pi \circ \uHom_{\cA}(\cM, -):\tails A\to \tails \uEnd_A(M)$$ is an equivalence functor.  For $N\in \grmod A$, there exists $n\in \ZZ$ such that
$$\uHom_{\cA}(\cM, \cN)_{\geq n}\cong \uHom_A(M, N)_{\geq n}$$
in $\grmod \uEnd_A(M)$ by \cite [Corollary 7.3 (2)]{AZ}, so the above equivalence functor is induced by the functor $\uHom_A(M, -):\grmod A\to \grmod \uEnd_A(M)$.  
\end{proof} 

\begin{corollary} Let $S$ be a noetherian AS-regular algebra of dimension $d\geq 2$, and $G\leq \HSL(S)$.  If $S^G$ is a graded isolated singularity, then $\uHom_{S^G}(S, -):\tails S^G\to \tails \uEnd_{S^G}(S)$ is an equivalence functor. 
\end{corollary}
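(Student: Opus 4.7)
The plan is to obtain this as a direct application of Theorem \ref{thm.aend} with $A = S^G$ and $M = S$. I must verify: (a) $S^G$ is a noetherian AS-Gorenstein graded isolated singularity of dimension $d \geq 2$; (b) $S \in \CM^{\ZZ}(S^G)$; and (c) $S$ contains $S^G$ as a direct summand in $\grmod S^G$.

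For (a), since $S$ is noetherian AS-regular of dimension $d$ and $G \leq \HSL(S)$ is a finite subgroup, Theorem \ref{thm.asg} makes $S^G$ noetherian AS-Gorenstein of dimension $d$; combined with the hypothesis that $S^G$ is a graded isolated singularity and $d \geq 2$, this settles (a). For (c), the Reynolds operator $\rho : S \to S^G$, $\rho(s) = \frac{1}{|G|} \sum_{g \in G} g(s)$, is well-defined because $\fchar k \nmid |G|$, and it provides an $S^G$-bimodule retraction of the inclusion $S^G \hookrightarrow S$, exhibiting $S^G$ as a direct summand of $S$ on either side.

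The heart of the argument is (b), namely that $\uH^i_{\fm_{S^G}}(S) = 0$ for $i \neq d$. My strategy is to compare this with $\uH^i_{\fm_S}(S)$, which vanishes for $i \neq d$ by AS-regularity of $S$. I will invoke the standard fact from graded invariant theory that, with $|G|$ invertible in $k$, the algebra $S$ is finitely generated as a graded $S^G$-module on both sides. This finite generation makes the filtrations $\{(S^G)_{\geq n}S\}_n$ and $\{S_{\geq n}\}_n$ cofinal on $S$, so for every graded right $S$-module the $(S^G)_{\geq n}$-torsion and $S_{\geq n}$-torsion subfunctors agree, and a direct limit argument identifies the corresponding local cohomology groups as graded vector spaces. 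With (a)--(c) in hand, Theorem \ref{thm.aend} delivers the claimed equivalence.

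The step I expect to be most delicate is (b), since the comparison of local cohomologies with respect to $\fm_{S^G}$ and $\fm_S$ requires careful tracking of cofinality of two one-sided filtrations in the noncommutative graded setting; unlike the commutative case, the argument needs left- and right-finiteness of $S$ over $S^G$ separately rather than a single finiteness statement.
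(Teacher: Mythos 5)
Your proposal is correct and follows the same route as the paper: the paper's proof likewise applies Theorem \ref{thm.aend} with $A=S^G$ and $M=S$, citing Theorem \ref{thm.asg} for the AS-Gorenstein property and simply asserting that $S\in\CM^{\ZZ}(S^G)$ contains $S^G$ as a direct summand. The only difference is that you spell out the verification of $S\in\CM^{\ZZ}(S^G)$ (via finiteness of $S$ over $S^G$ and the resulting agreement of the two local cohomologies), which the paper treats as known.
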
 

\begin{proof} Since $G\leq \HSL(S)$, $S^G$ is a noetherian AS-Gorenstein graded isolated singularity of dimension $d\geq 2$ by Theorem \ref{thm.asg}.  Since $S\in \CM^{\ZZ}(S^G)$ contains $S^G$ as a direct summand, the result follows from Theorem \ref{thm.aend}. 
\end{proof} 

We will consider another situation.  Recall that if $e\in A$ is an idempotent, then $\uEnd_A(eA)\cong eAe$ as graded algebras, so we will now find condition on $e\in A$ such that $\tails eAe\cong \tails A$. 

The following lemma is known to the experts.  The proof is similar to that of Lemma \ref{lem.ample2}.  

\begin{lemma} \label{lem.ample}
Let $A$ be a right noetherian graded algebra and $e\in A$ an idempotent such that $eAe$ is a right noetherian graded algebra.
If $eA$ is finitely generated as a graded left $eAe$-module and $A/(e)\in \Tors A$, then
$$-\otimes _AAe: \Tails A\rightleftarrows
\Tails eAe: -\otimes _{eAe}eA$$ are equivalence functors
quasi-inverse to each other.  Moreover, if $Ae\in \grmod eAe$, then
$$-\otimes _AAe: \tails
A\rightleftarrows \tails eAe: -\otimes _{eAe}eA$$ are equivalence functors
quasi-inverse to each other.
\end{lemma}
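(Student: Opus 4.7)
The plan is to reduce the claim to two routine natural isomorphisms plus one careful application of the hypothesis $A/(e)\in \Tors A$. I would first check that both functors preserve torsion, and hence descend to the quotient categories. For $-\otimes_A Ae=(-)e$ this is immediate since $(eAe)_{\geq n}\subseteq A_{\geq n}$. For $-\otimes_{eAe}eA$ I would invoke the hypothesis that $eA$ is finitely generated as a graded left $eAe$-module: choosing homogeneous generators $\xi_1,\ldots,\xi_l$ of $eA$ of degrees at most $d$ and writing any $ea\in (eA)_m$ as $ea=\sum_j\alpha_j\xi_j$ with $\alpha_j\in(eAe)_{m-d_j}$, one finds that whenever $n\in N$ satisfies $n(eAe)_{\geq k}=0$, the element $n\otimes e\in N\otimes_{eAe}eA$ is annihilated by $A_{\geq k+d}$. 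Since every element of $N\otimes_{eAe}eA$ is a finite $A$-linear combination of such $n\otimes e$, the whole module is torsion.

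Next, I would produce the two natural transformations that witness quasi-inverseness. The multiplication map $eA\otimes_A Ae\to eAe$ is an isomorphism of graded $eAe$-bimodules, which gives the natural isomorphism
$$N\otimes_{eAe}eA\otimes_A Ae\;\cong\;N\otimes_{eAe}eAe\;\cong\;N$$
already in $\GrMod eAe$. In the other direction, consider the multiplication map
$$\mu_M\colon Me\otimes_{eAe}eA\longrightarrow M,\qquad me\otimes eb\mapsto meb.$$
Since $Ae$ is a direct summand of $A$ as a left $A$-module it is projective and hence flat, so $-\otimes_A Ae=(-)e$ is exact; applying it to $\mu_M$ yields the canonical isomorphism $Me\otimes_{eAe}eAe\cong Me$, so both $(\Ker\mu_M)e$ and $(\Coker\mu_M)e$ vanish.

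The main obstacle, and the only place the hypothesis $A/(e)\in \Tors A$ enters, is to conclude from this that $\Ker\mu_M$ and $\Coker\mu_M$ lie in $\Tors A$. Since $A/(e)$ is cyclic, generated by $\bar 1$, the torsion hypothesis supplies some $n_0$ with $A_{\geq n_0}\subseteq (e)=AeA$. Then any $X\in\GrMod A$ with $Xe=0$ is automatically torsion: for $x\in X$ and $a=\sum_i b_iec_i\in A_{\geq n_0}$, one computes $xa=\sum_i(xb_i)\,e\,c_i\in XeA=0$. Applying this to $X=\Ker\mu_M,\Coker\mu_M$ shows $\mu_M$ becomes an isomorphism in $\Tails A$, and combined with the first natural isomorphism this yields the desired quasi-inverse equivalences $\Tails A\rightleftarrows \Tails eAe$.

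For the noetherian statement I would check that each functor preserves finite generation. Under the additional hypothesis $Ae\in \grmod eAe$, if $M\in \grmod A$ is generated by $m_1,\ldots,m_k$ and $Ae$ is generated over $eAe$ by $\eta_1,\ldots,\eta_r$, then $M\otimes_A Ae=Me$ is generated over $eAe$ by $\{m_i\eta_j\}$; conversely, $N\otimes_{eAe}eA$ is always generated over $A$ by $\{n_i\otimes e\}$ whenever $N$ is generated by $n_1,\ldots,n_k$ over $eAe$, so no further hypothesis is needed for this direction. Hence the above equivalence restricts to $\tails A\rightleftarrows \tails eAe$.
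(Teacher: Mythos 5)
Your argument is correct and is essentially the proof the paper intends: the paper does not prove this lemma directly but refers to the proof of Lemma \ref{lem.ample2}, whose key ingredients --- the multiplication map $Ae\otimes_{eAe}eA\to A$ with cokernel $A/(e)$, the observation that its kernel is killed by $e$, and the fact that any graded module $X$ with $Xe=0$ is an $A/(e)$-module and hence lies in $\Tors A$ --- are exactly the ones you use, and your verification of torsion-preservation and of the restriction to $\grmod$ is sound. The only point glossed over (by you and by the paper alike, via its ``abuse of notations'' convention) is the descent of the merely right-exact functor $-\otimes_{eAe}eA$ to the quotient categories: besides preserving torsion objects one should also note that $\Tor_1^{eAe}(T,eA)$ is torsion for torsion $T$, which follows from the same finite-generation argument you give for $eA$, applied to a graded free resolution.
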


For the purpose of this paper, we modify the $\chi$-condition as
follows.

\begin{definition}
We say that a graded algebra $A$ satisfies $\widetilde {\chi}_i$ if
$\uExt^j_A(T, N)\in \Tors A$ for all $T\in \GrMod A^e$ such that
$T_A\in \Tors A$, $N\in \GrMod A$ and $0\leq j\leq i$.
\end{definition}

\begin{lemma} \label{lem.ample2}
Let $A$ be a right noetherian graded algebra and $e\in A$ an idempotent such that $eAe$ is a right noetherian graded algebra.
If $A$ satisfies $\widetilde {\chi}_1$, $eA$ is finitely generated as a graded left $eAe$-module and $A/(e)\in \Tors A$,
then
$$\uHom_A(eA, -): \Tails A\rightleftarrows \Tails eAe:
\uHom_{eAe}(Ae, -)$$ are equivalence functors quasi-inverse to each
other.  Moreover, if $Ae\in \grmod eAe$,  then
$$\uHom_A(eA, -): \tails A\rightleftarrows \tails eAe:
\uHom_{eAe}(Ae, -)$$ are equivalence functors quasi-inverse to each
other.
\end{lemma}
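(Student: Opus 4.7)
The plan is to reduce to Lemma \ref{lem.ample} by showing that each Hom functor in the statement agrees with the corresponding tensor functor of Lemma \ref{lem.ample} modulo torsion; the role of $\widetilde{\chi}_1$ is to make the error term between the Hom and tensor descriptions disappear in $\Tails$.

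For the first functor, I would start from the canonical natural isomorphism of $\GrMod eAe$-valued functors $\uHom_A(eA,M) \xrightarrow{\sim} Me \cong M \otimes_A Ae$, given by evaluation $f \mapsto f(e)$. This already identifies $\uHom_A(eA,-)$ with $-\otimes_A Ae$ before passing to $\Tails$, so Lemma \ref{lem.ample} immediately yields that $\uHom_A(eA,-)$ induces an equivalence $\Tails A \to \Tails eAe$ with quasi-inverse $-\otimes_{eAe}eA$.

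For the second functor, I would compare $\uHom_{eAe}(Ae,-)$ with $-\otimes_{eAe}eA$ via the adjunction $F':=-\otimes_A Ae \dashv G':=\uHom_{eAe}(Ae,-)$, and show that its unit $\eta'_M\colon M\to \uHom_{eAe}(Ae,Me)$, $m\mapsto(ae\mapsto mae)$, and its counit $\epsilon'_N$ become isomorphisms after applying $\pi$. To handle $\eta'_M$, apply $\uHom_A(-,M)$ to the $A$-bimodule sequences
$$0\to (e)\to A\to A/(e)\to 0,\qquad 0\to K\to Ae\otimes_{eAe}eA\to (e)\to 0,$$
where $K$ denotes the kernel of the multiplication map. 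The resulting long exact sequences sandwich $\ker\eta'_M$ and $\coker\eta'_M$ between $\uHom_A(A/(e),M)$, $\uExt^1_A(A/(e),M)$, and $\uHom_A(K,M)$. By hypothesis $(A/(e))_A\in\Tors A$, so the first two terms lie in $\Tors A$ by $\widetilde{\chi}_1$. For the third, I would verify that $K_A\in\Tors A$: the hypothesis $A/(e)\in\Tors A$ means $A_{\geq m}\subseteq AeA$ for some $m$, and combined with finite generation of $eA$ as a left $eAe$-module this forces $K$ to be annihilated on the right by $A_{\geq N}$ for some $N$; then $\widetilde{\chi}_1$ again gives $\uHom_A(K,M)\in\Tors A$. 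Hence $\eta'_M$ becomes an isomorphism in $\Tails A$, and an analogous argument using the $\Tor$ long exact sequence in place of the Ext one handles $\epsilon'_N$. By uniqueness of right adjoints to an equivalence, it then follows that $\uHom_{eAe}(Ae,-)\cong -\otimes_{eAe}eA$ in $\Tails$, completing the $\Tails$ version.

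The $\tails$ version would then follow once one checks, using $Ae\in\grmod eAe$ and the finite generation of $eA$ over $eAe$, that both functors restrict to $\tails$ (modulo torsion). The technical heart of the argument is the verification that $K_A\in\Tors A$ together with the dual $\Tor$-theoretic analysis for the counit; this is where the finite-generation and $\widetilde{\chi}_1$ hypotheses genuinely interact, and I expect it to be the main obstacle. Everything else is a formal reduction to Lemma \ref{lem.ample} once the Hom-to-tensor comparison is in place.
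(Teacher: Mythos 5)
Your proposal follows essentially the same route as the paper's proof: identify $\uHom_A(eA,-)$ with $(-)e\cong -\otimes_AAe$, then show the adjunction unit and counit become isomorphisms in $\Tails$ using the exact sequence $0\to K\to Ae\otimes_{eAe}eA\to A\to A/(e)\to 0$ together with $\widetilde{\chi}_1$ applied to $A/(e)$ and $K$, and finally transfer everything to Lemma \ref{lem.ample} to get the $\tails$ statement. Two small corrections to the step you flag as the main obstacle: $K\in\Tors A$ does not come from finite generation of $eA$ over $eAe$ but from the observation that applying $(-)e$ to the four-term sequence yields $Ae\otimes_{eAe}eAe\overset{\sim}{\to}Ae$, hence $Ke=0$, so $K\cdot AeA=0$ and $K$ is a module over the torsion algebra $A/(e)$; and no $\Tor$ analysis is needed for the counit, since $\uHom_A(eA,\uHom_{eAe}(Ae,M))\cong\uHom_{eAe}(eA\otimes_AAe,M)\cong\uHom_{eAe}(eAe,M)\cong M$ is already an isomorphism in $\GrMod eAe$, not merely in $\Tails eAe$.
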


\begin{proof}
For $M\in \GrMod eAe$,
$$\uHom_A(eA, \uHom_{eAe}(Ae, M))\cong \uHom_{eAe}(eA\otimes _AAe,
M)\cong \uHom_{eAe}(eAe, M)\cong M$$ in $\GrMod eAe$.

Let $\psi :Ae\otimes_{eAe}eA\to A$ be a map defined by $\psi (ae\otimes eb)=aeb$.  It is easy to see that $\psi$ is a graded $A$-$A$ bimodule homomorphism with $C:=\Im \psi=AeA=(e)$ the two-sided ideal of $A$ generated by $e$.  Let $K:=\Ker \psi$ so that 
$$0\to K\to Ae\otimes _{eAe}eA\to A\to A/(e)\to 0$$ is an exact sequence in $\GrMod A^e$.  By applying $(-)e$ to the above exact sequence, we have 
$$0\to Ke\to Ae\otimes _{eAe}eAe \overset{\sim}{\rightarrow } Ae\to 0\to 0,$$ so $Ke=0$.  Since $A/(e)\in \Tors A$ and $K$ can be view as a graded right $A/(e)$-module, it follows that $K\in \Tors A$.  For $N\in \GrMod A$, exact sequences 
\begin{align*}
& 0\to K\to Ae\otimes _{eAe}eA\to C\to 0, \\
& 0\to C\to A\to A/(e)\to 0
\end{align*}
in $\GrMod A^e$ induce exact sequences  
\begin{align*}
& 0\to \uHom_A(C, N)\to \uHom_A(Ae\otimes _{eAe}eA, N)\to \uHom_A(K, N) \\
& 0\to \uHom_A(A/(e), N)\to \uHom_A(A, N)\to \uHom_A(C, N)\to \uExt^1_A(A/(e), N)
\end{align*}
in $\GrMod A$.  Since $A$ satisfies $\widetilde {\chi}_1$, we have that
$$\uHom_A(K, N),\; \uHom_A(A/(e), N),\; \uExt^1_A(A/(e), N)\in \Tors A,$$
so
\begin{align*}
\pi \uHom_{eAe}(Ae, \uHom_A(eA, N))&\cong \pi \uHom_A(Ae\otimes _{eAe}eA, N)\\
&\cong \pi \uHom_A(C, N)\cong \pi \uHom_A(A, N)\cong \pi N
\end{align*}
in $\Tails A$.


Since $\uHom_A(eA, -)\cong (-)e\cong -\otimes _AAe:\GrMod A\to
\GrMod eAe$, it follows that $\uHom_{eAe}(Ae, -)\cong -\otimes
_{eAe}eA:\Tails eAe\to \Tails A$ by Lemma \ref{lem.ample},
so if $Ae\in \grmod eAe$, then
$$\uHom_A(eA, -): \tails A\cong \tails eAe: \uHom_{eAe}(Ae, -)$$ are
equivalence functors quasi-inverse to each other by Lemma
\ref{lem.ample} again.
\end{proof}

In the second statment of the above lemma, we may replace the condition $\widetilde{\chi}_1$ by the condition $\chi_1$. 

\subsection{Ampleness of a Group Action} 

Following the previous subsections, we will define a notion of ampleness of a group action on a graded algebra.  
Let $A$ be a graded algebra, and $r\in \NN^+$.  
The $r$-th Veronese algebra of $A$ is defined by $A^{(r)}:=\bigoplus _{i\in \NN}A_{ri}$, and the $r$-th quasi-Veronese algebra of $A$ is defined by 
$$A^{[r]}:= \begin{pmatrix} A^{(r)} &
A(1)^{(r)} & \cdots &
A(r-1)^{(r)} \\
A(-1)^{(r)} & A^{(r)} & \cdots & A(r-2)^{(r)}
\\
\vdots & \vdots & \ddots & \vdots \\
A(-r+1)^{(r)} & A(-r+2)^{(r)} & \cdots &
A^{(r)}\end{pmatrix}$$
where the
multiplication of $A^{[r]}$ is given by
$(a_{ij})(b_{ij})=(\sum_ka_{kj}b_{ik})$ (see \cite {MM}).

\begin{theorem} \label{thm.1} 
Let $A$ be a right noetherian connected
graded algebra of $\depth A\geq 2$ satisfying
$\chi_1$, and $r\in \NN^+$. Then $(\cA, (r))$ is ample for $\tails
A$ if and only if 
$$(-)e:\tails A^{[r]}\to \tails e(A^{[r]})e$$
is an equivalence functor where $$e=\begin{pmatrix} 1
& 0 & \cdots &
0 \\ 0 & 0 & \cdots & 0 \\
\vdots & \vdots & \ddots & \vdots \\
0 & 0 & \cdots & 0 \end{pmatrix}\in \begin{pmatrix} A^{(r)} &
A(1)^{(r)} & \cdots &
A(r-1)^{(r)} \\
A(-1)^{(r)} & A^{(r)} & \cdots & A(r-2)^{(r)}
\\
\vdots & \vdots & \ddots & \vdots \\
A(-r+1)^{(r)} & A(-r+2)^{(r)} & \cdots &
A^{(r)}\end{pmatrix}=A^{[r]}$$ is an idempotent. 
\end{theorem}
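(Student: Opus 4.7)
The plan is to bridge both conditions via the Veronese functor $(-)^{(r)}: \tails A \to \tails A^{(r)}$, interpreting the idempotent-cut functor $(-)e$ through the Minamoto--Mori equivalence $\tails A \cong \tails A^{[r]}$, and interpreting ampleness through the Artin--Zhang characterization of Theorem \ref{thm.AZ}.

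First, a direct matrix computation shows $eA^{[r]}e = A^{(r)}$ and that $eA^{[r]}$ has underlying graded vector space $\bigoplus_{j=0}^{r-1} A(j)^{(r)}$. I would then invoke the equivalence $\Phi: \grmod A \to \grmod A^{[r]}$ from \cite{MM} defined on objects by $\Phi(M) = \bigoplus_{j=0}^{r-1} M(j)^{(r)}$ (with its natural right $A^{[r]}$-action), which intertwines $(r)_A$ with $(1)_{A^{[r]}}$, preserves torsion modules, and descends to an equivalence $\tails A \cong \tails A^{[r]}$. Since $\Phi(M)\cdot e = M^{(r)}$, the functor $(-)e : \tails A^{[r]} \to \tails A^{(r)}$ corresponds under $\Phi$ to the Veronese functor $(-)^{(r)}: \tails A \to \tails A^{(r)}$.

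Next, using the hypotheses $\depth A \geq 2$ and $\chi_1$, I would identify the Veronese functor with the Artin--Zhang functor of Theorem \ref{thm.AZ}. The local cohomology exact sequence collapses (since $\uH^0_\fm A = \uH^1_\fm A = 0$) to give $A \cong \uHom_{\cA}(\cA, \cA)$, hence $B(\tails A, \cA, (r)) \cong A^{(r)}$. Similarly, for any $M \in \grmod A$, the natural map $M^{(r)} \to \bigoplus_i \Hom_{\tails A}(\cA, \cM(ri))$ has kernel and cokernel that are torsion over $A^{(r)}$ by $\chi_1$, so the two functors agree after applying $\pi$. The forward direction is then immediate from Theorem \ref{thm.AZ}; for the converse, an equivalence $(-)e$ yields an equivalence $\tails A \cong \tails A^{(r)}$ identifying $(\cA, (r))$ on the left with $(\cA^{(r)}, (1))$ on the right, and since $A^{(r)}$ inherits connectedness, right noetherianity, $\chi_1$ and $\depth \geq 2$ from $A$, the latter pair is ample for $\tails A^{(r)}$ by the standard Artin--Zhang theory, so the former pair is ample for $\tails A$.

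The main obstacle will be Step 2: unpacking the precise form of the Minamoto--Mori equivalence from \cite{MM} and checking that the right $A^{[r]}$-action on $\Phi(M)$ is such that $\Phi(M)\cdot e$ equals $M^{(r)}$ with its standard $A^{(r)}$-module structure (rather than a shifted or twisted version). A secondary subtlety is the converse direction, which requires verifying that $A^{(r)}$ genuinely inherits the $\chi_1$ and depth hypotheses from $A$ so that $(\cA^{(r)}, (1))$ is automatically ample.
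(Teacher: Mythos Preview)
Your approach is essentially the same as the paper's. Both factor $(-)e$ through the Minamoto--Mori equivalence $Q:\grmod A\to\grmod A^{[r]}$ and the Veronese functor $(-)^{(r)}$, reducing the statement to ``$(\cA,(r))$ is ample if and only if $(-)^{(r)}:\tails A\to\tails A^{(r)}$ is an equivalence.'' The paper simply cites this last biconditional as \cite[Theorem~3.5]{Mmc}, whereas you unpack it from Artin--Zhang directly; your argument for the converse direction (that $A^{(r)}$ inherits right noetherianity and $\chi_1$, so $(\cA^{(r)},(1))$ is ample for $\tails A^{(r)}$) is precisely the content hidden in that citation.

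One small correction: with the paper's conventions for $A^{[r]}$ and its multiplication $(a_{ij})(b_{ij})=(\sum_k a_{kj}b_{ik})$, the equivalence $Q$ sends $M$ to the column $(M^{(r)},M(-1)^{(r)},\dots,M(-r+1)^{(r)})^{\mathsf T}$ with \emph{negative} shifts, not $\bigoplus_j M(j)^{(r)}$. This does not affect the key identity $Q(M)e\cong M^{(r)}$ (the zeroth component is $M^{(r)}$ either way), but your version with positive shifts will not carry a well-defined right $A^{[r]}$-action, so the ``main obstacle'' you flagged is real but easily fixed by changing the sign.
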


\begin{proof} 
By \cite[Theorem 3.5]{Mmc}),  $(\cA, (r))$ is ample for $\tails A$ if and only if $(-)^{(r)}:\grmod A\to \grmod A^{(r)}$ induces an equivalence functor $(-)^{(r)}:\tails A\to \tails A^{(r)}$.  
There exists an
equivalence functor $Q:\grmod A\to \grmod A^{[r]}$ defined by
$$Q(M)=\begin{pmatrix} M^{(r)} \\ M(-1)^{(r)} \\ \vdots \\
M(-r+1)^{(r)}\end{pmatrix}$$
where the right action of $A^{[r]}$ on $Q(M)$ is
given by $(m_i)(a_{ij})=(\sum _km_ka_{ik})$ (cf. \cite [Remark 4.9]{MM}).  
Since 
$$e(A^{[r]})e=\begin{pmatrix} A^{(r)} & 0 & \cdots &
0 \\
0 & 0 & \cdots & 0
\\
\vdots & \vdots & \ddots & \vdots \\
0 & 0 & \cdots & 0\end{pmatrix}\cong A^{(r)}$$ as graded algebras and 
$$Q(M)e=\begin{pmatrix} M^{(r)} \\ M(-1)^{(r)} \\ \vdots \\
M(-r+1)^{(r)}\end{pmatrix}\begin{pmatrix} 1 & 0 & \cdots &
0 \\ 0 & 0 & \cdots & 0 \\
\vdots & \vdots & \ddots & \vdots \\
0 & 0 & \cdots & 0 \end{pmatrix}=\begin{pmatrix} M^{(r)} \\ 0 \\ \vdots \\ 0 \end{pmatrix}\cong
M^{(r)}
$$ in $\grmod A^{(r)}$,  $(-)^{(r)}:\tails A\to \tails A^{(r)}$ is an equivalence functor if and only if 
the composition of functors 
$$\begin{CD} (-)e:\tails A^{[r]} @>{Q^{-1}}>> \tails
A @>(-)^{(r)}>> \tails A^{(r)}\end{CD}$$ 
is an equivalence functor.  
\end{proof} 


Let $A=k\<x_1, \dots, x_n\>/I$ be a right noetherian connected graded algebra of $\depth A\geq 2$ satisfying $\chi_1$ 
and $G=\<\diag (\xi ^{\deg x_1}, \dots, \xi ^{\deg x_n})\>\leq \GL(n, k)$ a cyclic subgroup generated by a diagonal matrix where $\xi\in k^{\times}$ is a primitive $r$-th root of unity.  
By \cite[Theorem 4.4 (1)]{Mmc}, $A^{[r]}\cong A*G$ as ungraded algebras, and in this isomorphism, it is shown that the idempotent $e\in A^{[r]}$ above is sent to $\frac{1}{|G|}\sum 1*g\in A*G$.  Since $e(A^{[r]})e\cong A^{(r)}\cong A^G$ as ungraded algebras, we define the notion of ampleness of a group action motivated by the above result.  


\begin{definition}
Let $A$ be a right noetherian graded algebra, $G\leq \GrAut A$
a finite subgroup such that $\fchar k$ does not divide $|G|$, and $e=\frac{1}{|G|}\sum 1*g\in A*G$. We say that
$G$ is ample for $A$ if
$$
(-)e
:\tails A*G\to \tails A^G$$ is an
equivalence functor.
\end{definition}

Note that, in the above setting, $A*G$ is right noetherian.
Moreover, it follows from \cite[Corollary 1.12]{Mfs} that $A^G$ is also right noetherian.

\begin{lemma} \label{lem.amis} 
Let $A$ be a right noetherian graded algebra of finite global dimension.   If $G\leq \GrAut A$ is a finite ample subgroup such that $\fchar k$ does not divide $|G|$, then $A^G$ is a graded isolated singularity.  
\end{lemma}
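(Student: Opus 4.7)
The plan is to reduce the finiteness of $\gldim(\tails A^G)$ to the finiteness of $\gldim(A*G)$ by transporting along the ample equivalence, and then to observe that $\gldim(A*G)$ is controlled by $\gldim A$ because $kG$ is semisimple.

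First, since $\fchar k$ does not divide $|G|$, the group algebra $kG$ is semisimple, and a standard argument with the relative bar resolution (or a Maschke-type averaging argument using the idempotent $e=\frac{1}{|G|}\sum_{g\in G} 1*g$) gives that every graded $A*G$-module admits a projective resolution of length at most $\gldim A$. Hence $\gldim(A*G)\le \gldim A<\infty$. Using the final remark in the Terminologies subsection (if a graded algebra has finite global dimension, then so does its tails category), we conclude that $\gldim(\tails A*G)<\infty$.

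Next, since $G$ is ample for $A$, by definition the functor
\[
(-)e:\tails A*G\longrightarrow \tails A^G
\]
is an equivalence of $k$-linear abelian categories. Equivalences of abelian categories preserve $\Ext$ groups in all degrees, so
\[
\gldim(\tails A^G)=\gldim(\tails A*G)<\infty.
\]
By the definition of graded isolated singularity, $A^G$ is a graded isolated singularity, provided $A^G$ is right noetherian. Right noetherianity of $A^G$ is already noted after the definition of ampleness (via $A*G$ being right noetherian and \cite[Corollary 1.12]{Mfs}).

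The only non-formal step is the inequality $\gldim(A*G)\le \gldim A$, and the main point there is to check that the usual ungraded proof (using $e$ as a separability idempotent for the extension $A\hookrightarrow A*G$, so that every $A*G$-module is a direct summand of an induced $A$-module) works verbatim in the graded setting; this is straightforward because $e$ is a homogeneous element of degree $0$ and the induction and restriction functors between $\GrMod A$ and $\GrMod A*G$ are exact and preserve projectives. Everything else is a direct application of the hypotheses and of results already recorded in the paper.
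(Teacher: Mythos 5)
Your proposal is correct and follows essentially the same route as the paper: transport finiteness of the global dimension of the tails category along the ample equivalence $(-)e:\tails A*G\to\tails A^G$, after bounding $\gldim(\tails A*G)$ by $\gldim(A*G)\leq\gldim A$. The only cosmetic difference is that you sketch the Maschke-type argument for $\gldim(A*G)\leq\gldim A$ directly, whereas the paper simply cites \cite[Theorem 7.5.6]{MR} (which in fact gives equality).
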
 

\begin{proof} 
Since $A$ has finite global dimension,
$$\gldim (\tails A^G)=\gldim (\tails A*G)\leq \gldim A*G=\gldim
A<\infty$$ by \cite [Theorem 7.5.6]{MR}, so $A^G$ is a graded isolated
singularity.
\end{proof} 

\begin{theorem} \label{thm.ample}
Let $A$ be a noetherian graded algebra, $G\leq \GrAut A$ a finite
subgroup such that $\fchar k$ does not divide $|G|$, and $e:=\frac{1}{|G|}\sum 1*g\in A*G$. If $A*G/(e)$ is
finite dimensional, then $G$ is ample for $A$.  In fact, 
$$-\otimes _{A*G}(A*G)e : \tails A*G\rightleftarrows \tails A^G:-\otimes _{A^G}e(A*G)$$
are equivalence functors quasi-inverse to each other.
Moreover, if $A$ is a locally finite $\NN$-graded algebra satisfying $\chi_1$, then
$$\uHom_{A*G}(A, -): \tails A*G\rightleftarrows \tails A^G: \uHom_{A^G}(A, -)$$
are equivalence functors quasi-inverse to each other.
\end{theorem}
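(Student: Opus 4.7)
The plan is to apply Lemma~\ref{lem.ample} for the main claim and Lemma~\ref{lem.ample2} for the ``Moreover'' clause, with the algebra $A$ in those lemmas played by $A*G$ and the idempotent by $e$. Under the identifications of Lemma~\ref{lem.eGe}, namely $e(A*G)e \cong A^G$ as graded algebras, $(A*G)e \cong A$ as graded right $A^G$-modules, and $e(A*G) \cong A$ as graded left $A^G$-modules, the tensor and Hom functors appearing in those lemmas specialize exactly to the functors in the statement of the theorem.

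For the first assertion, the hypotheses of Lemma~\ref{lem.ample} to check are that $A*G$ and $e(A*G)e$ are right noetherian, that $A*G/(e) \in \Tors A*G$, and that $e(A*G)$ and $(A*G)e$ are finitely generated on the appropriate sides over $e(A*G)e$. Right noetherianness of $A*G$ is immediate from that of $A$, since $A*G$ is a finite free right $A$-module, and right noetherianness of $A^G \cong e(A*G)e$ is by \cite[Corollary 1.12]{Mfs} as already cited in the preamble to the subsection. The torsion property of $A*G/(e)$ follows from its finite dimensionality, since any finite-dimensional $\NN$-graded module is annihilated by $(A*G)_{\geq n}$ for $n$ large. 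The finite-generation conditions reduce under the identifications of Lemma~\ref{lem.eGe} to showing that $A$ is finitely generated as both a left and a right $A^G$-module; this I would deduce from the standard invariant-theoretic argument using the Reynolds operator $\frac{1}{|G|}\sum_g g(-)\colon A \to A^G$, which splits the inclusion $A^G \hookrightarrow A$ as $A^G$-bimodules, together with the noetherian hypothesis on $A$. Lemma~\ref{lem.ample} then yields the two tensor equivalences as stated, and in particular proves that $G$ is ample for $A$.

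For the ``Moreover'' clause I will apply Lemma~\ref{lem.ample2} in place of Lemma~\ref{lem.ample}, invoking the remark after it that $\chi_1$ may replace $\widetilde{\chi}_1$ for the small-$\tails$ version. The only additional input is $\chi_1$ for $A*G$, which I plan to transfer from $\chi_1$ for $A$: since $A*G$ is finite free over $A$ on both sides, torsion modules and finitely generated modules restrict correctly, and since $|G|$ is invertible in $k$ one has $\uExt^j_{A*G}(T,N) \cong \uExt^j_A(T,N)^G$ as a $kG$-summand of $\uExt^j_A(T,N)$, the latter being finite dimensional by $\chi_1$ for $A$. Translating the Hom functors from Lemma~\ref{lem.ample2} through the identifications of Lemma~\ref{lem.eGe} delivers the stated functors $\uHom_{A*G}(A,-)$ and $\uHom_{A^G}(A,-)$. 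The principal obstacle I anticipate is not any single step of the reduction but the careful verification that $A$ is finitely generated on both sides over $A^G$; this is standard in invariant theory but requires a genuine noncommutative-graded argument rather than a direct citation.
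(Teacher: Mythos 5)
Your proposal follows the paper's proof essentially step for step: both reduce the theorem to Lemma \ref{lem.ample} (for the tensor equivalences) and Lemma \ref{lem.ample2} together with the remark that $\chi_1$ may replace $\widetilde{\chi}_1$ (for the Hom equivalences), applied to the idempotent $e\in A*G$ under the identifications of Lemma \ref{lem.eGe}, with $A*G/(e)\in \Tors (A*G)$ coming from finite-dimensionality. Your transfer of $\chi_1$ from $A$ to $A*G$ via $\uExt^j_{A*G}(T,N)\cong \uExt^j_A(T,N)^G$ is a perfectly good substitute for the paper's citation of \cite[Theorem 8.3 (1)]{AZ}.

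The one genuine soft spot is your treatment of the finite generation of $A$ as a left and a right $A^G$-module. The Reynolds operator splits $A^G\hookrightarrow A$ as $A^G$-bimodules, and combined with noetherianness of $A$ this gives noetherianness of $A^G$; but a splitting in that direction does not yield that $A$ is a \emph{finitely generated} $A^G$-module, which is the hypothesis Lemma \ref{lem.ample} actually needs. This is precisely where the paper invokes Montgomery's \cite[Corollary 5.9]{Mfs}, whose proof uses the surjectivity of the trace map $A\to A^G$ and the Morita context between $A*G$ and $e(A*G)e$, not merely the bimodule splitting. You correctly identified this as the delicate step, but the argument you sketch for it would not close as written; either cite Montgomery as the paper does, or supply the trace/Morita-context argument.
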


\begin{proof}
Since $A$ is noetherian, both $A*G$ and $A^G$ are noetherian.
Moreover, $(A*G)e\cong A$ is finitely generated as graded right $A^G$-module and 
$e(A*G)\cong A$ is finitely generated as graded left $A^G$-module by Lemma \ref{lem.right} and \cite[Corollary 5.9]{Mfs}, so 
$$-\otimes _{A*G}(A*G)e
: \tails A*G\rightleftarrows \tails A^G:-\otimes _{A^G}e(A*G)$$ are equivalence
functors quasi-inverse to each other by Lemma \ref{lem.ample}.

Suppose that $A$ is a locally finite $\NN$-graded algebra satisfying $\chi_1$. Recall that we identify $A=(A*G)e$ in $\grmod A^G$, and
$A=e(A*G)$ in $\grmod A*G$.  
Since $A$ satisfies $\chi_1$ and $A*G$ is finitely generated as a graded left and right $A$-module, $A*G$ also satisfies $\chi_1$ by \cite[Theorem 8.3 (1)]{AZ} (cf. \cite[Proposition 3.1 (3), Corollary 3.6 (1), Proposition 3.11 (2)]{AZ}), 
so
$$\uHom_{A*G}(A, -): \tails A*G\rightleftarrows \tails A^G: \uHom_{A^G}(A, -)$$
are equivalence functors quasi-inverse to each other by Lemma
\ref{lem.ample2}.
\end{proof}

\section{Graded Skew Bimodule Calabi-Yau Algebras}

Graded bimodule Calabi-Yau algebras are an important class of algebras studied in representation theory of algebras.  In this section, we will prove that a graded skew bimodule Calabi-Yau algebra is an AS-regular algebra over $R$ defined in \cite {MM}, which is one of the generalizations of an AS-regular algebra.  

\subsection{Conventions on Bimodule Structures} 

In this subsection, we will fix some conventions on bimodule structures.  Although results in this subsection are known to the experts, it is sensitive to check the bimodule structures in each isomorphism in the proof of Theorem 3.5, so we will give proofs in some detail.  

For a graded algebra $A$, we define the enveloping algebra of $A$ by $A^e:=A^o\otimes _kA$.  The graded $A$-$A$ bimodule
$X$ can be viewed as a graded right $A^e$-module by $x(a\otimes
b):=axb$. By this right action, we have a commutative diagram
$$\begin{CD}
A^e\times A^e @>{\textnormal{ product }}>> A^e \\
\parallel & & \parallel \\
(A\otimes _kA)\times A^e @>{\textnormal{ right action }}>> A\otimes
_kA,
\end{CD}$$
so we can identify $A^e$ with $A\otimes _kA$ as a graded right
$A^e$-module. The map $\varphi:A^e\to (A^e)^o; \; a\otimes b\to
b\otimes a$ is an isomorphism of graded algebras 
so the graded $A$-$A$ bimodule $X$ can be viewed as a graded left
$A^e$-module via $\varphi $, that is, $(a\otimes
b)x:=x\varphi(a\otimes b)=x(b\otimes a)=bxa$.
For $X, Y\in \GrMod A^e$, we define the graded $A^e$-$A^e$ bimodule
structure on $X\otimes _kY$ by
$$(a\otimes b)(x\otimes y)(c\otimes d):=cxa\otimes byd.$$

For an abelian category $\cC$, we denote by $\cD(\cC)$ the derived category of $\cC$, and by $\cD^b(\cC)$ the bounded derived category of $\cC$.  

\begin{lemma} \label{lem.bibi}
Let $A$ be a graded algebra.  For $X, Y, Z\in \GrMod A^e$,
\begin{enumerate}
\item{}
$Z\lotimes _{A^e}(X\otimes _kY)\cong X\lotimes _AZ\lotimes _AY$
\item{} $(X\otimes _kY)\lotimes_{A^e}Z\cong Y\lotimes _AZ\lotimes
_AX$
\end{enumerate}
in $\cD(\GrMod A^e)$.
\end{lemma}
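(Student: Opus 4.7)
The strategy is to prove both isomorphisms by first establishing the corresponding identity for ordinary tensor products, and then upgrading to derived tensor products via a projective resolution of $Z$ over $A^e$.

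For part (1), I would first define the natural map
$$\Phi: Z \otimes_{A^e}(X \otimes_k Y) \to X \otimes_A Z \otimes_A Y, \qquad z \otimes (x \otimes y) \mapsto x \otimes z \otimes y,$$
and check that it is a well-defined isomorphism of right $A^e$-modules. Under the convention $(a\otimes b)(x\otimes y) = xa\otimes by$, the $A^e$-balancing relation $azb\otimes (x\otimes y) = z\otimes (xa\otimes by)$ on the left is sent to $x\otimes azb\otimes y = xa\otimes z\otimes by$ on the right, which is precisely the standard $A$-balancing. The right $A^e$-structures also match: the right action $(x\otimes y)(c\otimes d) = cx\otimes yd$ corresponds to the $A$-bimodule action on $X \otimes_A Z \otimes_A Y$ given by $c$ acting on $X$ from the left and $d$ on $Y$ from the right. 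The inverse is immediate.

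Next I would choose a projective resolution $P^\bullet \to Z$ in $\GrMod A^e$. Because $A^e = A\otimes_k A$ is free both as a left and as a right $A$-module ($k$ is a field), every $P^i$ is projective---hence flat---as a left and as a right $A$-module. Applying $\Phi$ term by term yields an isomorphism of complexes of right $A^e$-modules
$$P^\bullet \otimes_{A^e}(X \otimes_k Y) \;\cong\; X \otimes_A P^\bullet \otimes_A Y,$$
whose left-hand side represents $Z \lotimes_{A^e}(X\otimes_k Y)$ by definition. To identify the right-hand side with $X \lotimes_A Z \lotimes_A Y$, one checks that each $P^i \otimes_A Y$ is flat as a left $A$-module---it suffices to verify this for the free generator $A\otimes_k A$, where $(A\otimes_k A)\otimes_A Y \cong A\otimes_k Y$ is left-$A$-free---so $X \otimes_A P^\bullet \otimes_A Y$ computes $X \lotimes_A (Z \lotimes_A Y) = X \lotimes_A Z \lotimes_A Y$, proving (1).

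Part (2) is proved by the analogous map $(x \otimes y) \otimes z \mapsto y \otimes z \otimes x$; the swap of $X$ and $Y$ reflects the right $A^e$-action $(x\otimes y)(c\otimes d) = cx\otimes yd$, whose left factor twists into a right action via the anti-isomorphism $A^e \cong (A^e)^o$. I expect the only real obstacle to be bookkeeping: since $A^e = A^o\otimes_k A$ carries a built-in twist and $X\otimes_k Y$ has two commuting $A$-bimodule structures, one must keep careful track of which copy of $A$ acts on which factor at each step. The homological content---existence of $A^e$-projective resolutions together with their two-sided flatness over $A$---is completely standard.
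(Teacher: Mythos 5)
Your argument is correct and follows essentially the same route as the paper's proof: the explicit swap map $z\otimes(x\otimes y)\mapsto x\otimes z\otimes y$, passage to a free resolution of $Z$ over $A^e$, and the observation that $A^e$ is free over $A$ on both sides so that this resolution simultaneously computes the two derived tensor products over $A$. The only cosmetic difference is that you verify the underived isomorphism for arbitrary $Z$ via an explicit inverse, whereas the paper checks it only for $Z=A^e$ and free $Z$ before resolving.
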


\begin{proof}
(1) It is easy to check that the map 
$$\phi:Z\otimes _{A^e}(X\otimes _kY)\to X\otimes _AZ\otimes
_AY$$ defined by $\phi(z\otimes x\otimes y)=x\otimes z\otimes y$ is
a well-defined 
homomorphism in $\GrMod A^e$.
If $Z=A^e$, then $\phi$ induces an isomorphism of graded vector
spaces
$$\begin{array}{ccccccc}
A^e\otimes _{A^e}(X\otimes _kY) & \cong &  X\otimes _kY & \cong &
X\otimes _AA\otimes _kA\otimes _AY & \cong & X\otimes _AA^e\otimes
_AY \\
(a\otimes b)\otimes (x\otimes y) & \rightarrow & xa\otimes by &
\leftarrow & (x\otimes a)\otimes (b\otimes y) & \leftarrow &
x\otimes (a\otimes b)\otimes y,
\end{array}$$
so $\phi$ is an isomorphism in $\GrMod A^e$. Since tensor products
commute with arbitrary direct sum and degree shift, if $Z$ is free
in $\GrMod A^e$, then $\phi$ is an isomorphism in $\GrMod A^e$.  If
$F$ is a free resolution of $Z$ in $\GrMod A^e$, then $F$ is
simultaneously a free resolution of $Z$ in $\GrMod A$ and $\GrMod
A^o$, so $$Z\lotimes _{A^e}(X\otimes _kY)\cong F\otimes
_{A^e}(X\otimes _kY)\cong X\otimes _AF\otimes _AY\cong X\lotimes
_AZ\lotimes _AY$$ in $\cD(\GrMod A^e)$.

(2) The proof is similar to (1).  
\end{proof}

For $X, Y\in \GrMod A^e$, we define the graded $A^e$-$A^e$ bimodule
structure on $\uHom_k(X, Y)$ by
$$\left ( (a\otimes b)\varphi(c\otimes d)\right ) (x)=b\varphi (axc)d.$$ 


\begin{lemma} \label{lem.bibi2}
Let $A$ be a graded algebra.  For $X, Y\in \GrMod A^e$,
$$\RuHom_A(X\otimes
_kA, Y)\cong \uHom_A(X\otimes _kA, Y)\cong \uHom_k(X, Y)\cong
DX\otimes _kY$$ in $\cD(\GrMod (A^e)^e)$ (in the derived category of
graded $A^e$-$A^e$ bimodules).
\end{lemma}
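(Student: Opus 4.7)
The plan is to chain three standard natural isomorphisms of graded $k$-vector spaces and then verify that each respects the $(A^e)^e$-bimodule structure. The underlying identifications are classical; the real content is the bookkeeping for four simultaneous $A$-actions.

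First, I would observe that $X \otimes_k A$ is free as a graded right $A$-module with respect to the right $A$-action inherited from the tensor factor $A$ acting on itself: choosing a homogeneous $k$-basis $\{x_i\}$ of $X$ exhibits $X \otimes_k A \cong \bigoplus_i x_i A \cong \bigoplus_i A(-\deg x_i)$ in $\GrMod A$. In particular $X \otimes_k A$ is projective, so the canonical map $\uHom_A(X \otimes_k A, Y) \to \RuHom_A(X \otimes_k A, Y)$ is a quasi-isomorphism, establishing the first isomorphism at the level of graded vector spaces. Since the $(A^e)^e$-action on either side comes only from the remaining actions on $X$ and $Y$ not used in the Hom, compatibility here is automatic.

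Next, the hom-tensor adjunction sends an $A$-linear graded map $f : X \otimes_k A \to Y$ to its restriction $f|_{X \otimes 1} : X \to Y$, which can be any graded $k$-linear map; this bijection yields the second isomorphism $\uHom_A(X \otimes_k A, Y) \cong \uHom_k(X, Y)$. Here one compares the transported $(A^e)^e$-structure with the explicit formula $((a \otimes b) \varphi (c \otimes d))(x) = b \varphi(axc) d$ fixed in Section 3.1. For the last isomorphism, the canonical evaluation $DX \otimes_k Y \to \uHom_k(X, Y)$, $\xi \otimes y \mapsto (x \mapsto \xi(x) y)$, is an isomorphism in $\GrMod (A^e)^e$: the two $A$-actions on $DX$ (dual to those on $X \in \GrMod A^e$) combine with the two on $Y$ via the tensor convention $(a \otimes b)(x \otimes y)(c \otimes d) = cxa \otimes byd$ to yield precisely the bimodule structure defined on $\uHom_k(X, Y)$.

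The main obstacle throughout is not a mathematical one but a careful verification: each of the three isomorphisms is routine as a map of graded $k$-vector spaces, yet checking $(A^e)^e$-linearity demands meticulous comparison against the conventions fixed in Section 3.1, especially because each of $X$ and $Y$ is simultaneously supplying the $A$-action used inside the Hom and (part of) the $A$-action appearing in the outer bimodule structure. This is the very subtlety the authors flag in the introduction to this subsection when they note that it is sensitive to check the bimodule structures in each isomorphism.
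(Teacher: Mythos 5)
Your proposal is correct and follows essentially the same route as the paper: the second and third isomorphisms are realized by exactly the maps the paper uses, namely $\phi\mapsto[x\mapsto\phi(x\otimes 1)]$ and the evaluation map $\psi\otimes y\mapsto[x\mapsto\psi(x)y]$, with the bimodule compatibility checked against the conventions of Section 3.1. The only cosmetic difference is in the first step, where you invoke freeness of $X\otimes_kA$ as a graded right $A$-module to collapse $\RuHom$ to $\uHom$, while the paper reaches the same conclusion by the derived adjunction $\RuHom_A(X\lotimes_kA,Y)\cong\RuHom_k(X,\RuHom_A(A,Y))$; these are two phrasings of the same observation.
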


\begin{proof}
Viewing $X\otimes _kA, Y$ as graded $A$-modules,
\begin{align*}
\RuHom _A(X\otimes _kA, Y) & \cong \RuHom_A(X\lotimes _kA, Y) \\
& \cong \RuHom_k(X, \RuHom_A(A, Y)) \\
& \cong 
\uHom _k(X, Y)
\end{align*} in $\cD(\GrMod k)$.  It follows that $\RuHom_A(X\otimes _kA, Y)$ is concentrated in complex degree 0, so $\RuHom_A(X\otimes _kA, Y)\cong \uHom_A(X\otimes _kA, Y)$ in $\cD(\GrMod (A^e)^e)$.  It is easy to check that the map
$$\a :\uHom _A(X\otimes _kA, Y) \to \uHom_k(X, \uHom_A(A, Y))\to \uHom _k(X, Y)$$ defined by $\a(\phi)(x)=\phi
(x\otimes 1)$ is 
an isomorphism as graded $A^e$-$A^e$-bimodules.
On the other hand, 
it is easy to check that the map
$$\b:DX\otimes _kY=\uHom_k(X, k)\otimes _kY\to \uHom _k(X, k\otimes
_kY)\to  \uHom _k(X, Y)$$ defined by $\b(\psi\otimes y)(x)=\psi
(x)y$ is 
an isomorphism of graded $A^e$-$A^e$ bimodules. 
\end{proof}

\subsection{Graded Skew Bimodule Calabi-Yau Algebras} 

AS-regular algebras are important class of algebras studied in noncommutative algebraic geometry, while Calabi-Yau algebras are important class of algebras studied in representation theory.  In this subsection, after extending the notions of these two classes of algebras, we will see some relationships between them.  

For a graded algebra $A$, we denote by $\operatorname{perf} ^{\ZZ}A$ the full subcategory of $\cD^b(\grmod A)$ consisting of complexes quasi-isomorphic to perfect complexes, that is, complexes of finite length whose terms are finitely generated projectives.  Extending the notion of a Calabi-Yau algebra, the following class of algebras has been defined and studied in several papers (see \cite {RRZ}).   

\begin{definition} 
A graded algebra $A$ is called graded skew bimodule Calabi-Yau of
dimension $d$ and of Gorenstein parameter $\ell$ if $A\in
\operatorname {perf} ^{\ZZ}A^e$ and $\RuHom_{A^e}(A, A^e)\cong
{_{\nu}A}(\ell)[-d]$ in $\cD(\GrMod A^e)$ for some $\nu \in
\GrAut A$ called the Nakayama automorphism of $A$.
\end{definition}

There are a few generalizations of the notion of AS-regularity in the literature.  The following generalization is used in this paper.  

\begin{definition} \label{def.2}
(\cite [Definition 3.1]{MM})
A locally finite $\NN$-graded algebra $S$ with $R=S_0$, is called
AS-regular over $R$ of dimension $d$ and Gorenstein parameter
$\ell$ if
\begin{enumerate}
\item{} $\gldim R<\infty$,
\item{} $\gldim S=d<\infty$, and
\item{} $\RuHom_S(R, S)\cong {_{\s}DR}(\ell)[-d]$ in $\cD(\GrMod R^e)$ for some $\s\in
\Aut_kR$.
\end{enumerate}
\end{definition}

AS-regularity as defined in Definition \ref{def.as} is the same as AS-regularity over $k$ as defined above.
The above two classes of algebras are closely related.
In fact, they are equivalent if $A$ is connected graded by 
\cite [Lemma 1.2]{RRZ}.
We will show one implication for a non-connected case. 

\begin{theorem} \label{thm.sbcy}
Let $A$ be a locally finite $\NN$-graded skew bimodule Calabi-Yau
algebra with $R=A_0$ of dimension $d$ and of Gorenstein parameter
$\ell$. If $\gldim R<\infty$ and $\gldim A=d$, then $A$ is an
AS-regular algebra over $R$ of dimension $d$ and of Gorenstein
parameter $\ell$.
\end{theorem}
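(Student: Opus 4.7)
My plan is to verify condition~(3) of Definition~\ref{def.2}: namely, that $\RuHom_A(R,A)\cong {_\sigma DR}(\ell)[-d]$ in $\cD(\GrMod R^e)$ for some $\sigma\in\Aut_k R$, since conditions~(1) and~(2) are given by hypothesis. First I would observe that $R=A_0$ is finitely generated as a right $A$-module (being the quotient $A/A_{\geq 1}$), and since $\gldim A=d<\infty$, $R$ lies in $\operatorname{perf}^{\ZZ} A$; in particular $\RuHom_A(R,A)$ has bounded cohomology. The bimodule Calabi--Yau hypothesis $A\in\operatorname{perf}^{\ZZ} A^e$ together with $\RuHom_{A^e}(A,A^e)\cong {_\nu A}(\ell)[-d]$ then yields the Serre-type duality
\[
\RuHom_{A^e}(A,N)\;\cong\;{_\nu A}(\ell)[-d]\lotimes_{A^e}N
\]
for every $N\in\cD(\GrMod A^e)$. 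The strategy, adapted from the connected-graded argument in \cite[Lemma~1.2]{RRZ}, is to apply this duality to an auxiliary bimodule $N$ built from $R$ and $A$ (the natural choice being $N=R\otimes_k A$, with bimodule structure dictated by the conventions of Lemma~\ref{lem.bibi}) and then match both sides.

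For the right-hand side, Lemma~\ref{lem.bibi}(1) with $Z={_\nu A}$, $X=R$, $Y=A$ gives
\[
{_\nu A}\lotimes_{A^e}(R\otimes_k A)\;\cong\;R\lotimes_A{_\nu A}\lotimes_A A\;\cong\;R\lotimes_A{_\nu A}.
\]
Since $_\nu A\cong A$ as left $A$-modules via $x\mapsto\nu^{-1}(x)$, the module $_\nu A$ is free on the left, so the derived tensor collapses to an ordinary tensor concentrated in degree zero. A direct calculation using $R=A/A_{\geq 1}$ and $\overline{\nu(r)}=\sigma(\bar r)$ identifies $R\otimes_A{_\nu A}$ with $R$ twisted by $\sigma:=\nu|_R\in\Aut_k R$ (well-defined because the graded automorphism $\nu$ preserves $A_0=R$). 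Thus the right-hand side becomes ${_\sigma R}(\ell)[-d]$ in the appropriate twist convention.

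For the left-hand side, I would use Lemma~\ref{lem.bibi2} (which gives $\RuHom_A(X\otimes_k A,Y)\cong DX\otimes_k Y$ for bimodules $X,Y$) together with a tensor--hom adjunction to identify $\RuHom_{A^e}(A,R\otimes_k A)$ with the graded dual of $\RuHom_A(R,A)$, up to the $\sigma$-twist on one side. Applying the graded dual $D$, which is involutive on bounded complexes with finite-dimensional graded pieces (both conditions hold here by local finiteness of $A$ and the boundedness from the first paragraph), then produces $\RuHom_A(R,A)\cong {_\sigma DR}(\ell)[-d]$ in $\cD(\GrMod R^e)$, yielding condition~(3) with Nakayama twist $\sigma$.

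The main obstacle will be this last identification: rigorously matching the $\RuHom_{A^e}$-expression produced by Calabi--Yau duality with the graded dual of $\RuHom_A(R,A)$, while carefully tracking the $R$-$R$-bimodule structure through the chain of isomorphisms and ensuring the twist comes out as $\sigma=\nu|_R$ rather than $\sigma^{-1}$. This amounts to a non-connected-graded form of graded local/Serre duality, and requires meticulous bookkeeping of the four bimodule actions set up in the Conventions subsection (via the anti-isomorphism $\varphi\colon A^e\to(A^e)^o$ and the formula $(a\otimes b)(x\otimes y)(c\otimes d)=cxa\otimes byd$).
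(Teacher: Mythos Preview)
Your overall strategy coincides with the paper's: exploit the consequence of skew Calabi--Yau duality,
\[
\RuHom_{A^e}(A,N)\;\cong\;N\lotimes_{A^e}{_\nu A}(\ell)[-d]\qquad(N\in\cD(\GrMod A^e)),
\]
for an auxiliary bimodule $N$ built from $R$ and $A$, and then unwind both sides with Lemmas~\ref{lem.bibi} and~\ref{lem.bibi2}. The difference lies in the choice of $N$, and here your proposal has a genuine gap.

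You take $N=R\otimes_kA$. The right-hand side indeed comes out as ${_\sigma R}(\ell)[-d]$ via Lemma~\ref{lem.bibi}. But your claimed identification of the left-hand side with $D\RuHom_A(R,A)$ is not what Lemma~\ref{lem.bibi2} plus adjunction actually gives. Since $R\otimes_kA\cong D(DR)\otimes_kA\cong\RuHom_A(DR\otimes_kA,A)$ by Lemma~\ref{lem.bibi2}, the adjunction chain yields
\[
\RuHom_{A^e}(A,\,R\otimes_kA)\;\cong\;\RuHom_{A^e}\bigl(A,\,\RuHom_A(DR\otimes_kA,\,A)\bigr)\;\cong\;\RuHom_A(DR,\,A),
\]
not $D\RuHom_A(R,A)$. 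These two objects differ in general: the hypothesis $\gldim R<\infty$ does not make $R$ Frobenius, so there is no automatic passage between $\RuHom_A(DR,A)$ and $\RuHom_A(R,A)$ (or its dual). Your outcome is thus $\RuHom_A(DR,A)\cong{_\sigma R}(\ell)[-d]$, which is condition~(3) for the bimodule $DR$ rather than for $R$; the final dualization you propose does not bridge this, and the obstacle you flagged is a wrong identification rather than bookkeeping.

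The paper avoids this by choosing $N=DR\otimes_kA$ from the start. Then Lemma~\ref{lem.bibi2} gives $DR\otimes_kA\cong\RuHom_A(R\otimes_kA,A)$, so adjunction together with Lemma~\ref{lem.bibi}(1) identifies the left side \emph{directly} with $\RuHom_A(R,A)$; on the right, Lemma~\ref{lem.bibi}(2) gives $(DR\otimes_kA)\lotimes_{A^e}{_\nu A}(\ell)[-d]\cong A\lotimes_A{_\nu A}\lotimes_ADR(\ell)[-d]\cong{_\nu DR}(\ell)[-d]$. No dualization step is needed, and $\sigma=\nu|_R$ drops out automatically.
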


\begin{proof}
It is enough to check condition (3) in the above definition.
Since $A$ is a graded skew bimodule Calabi-Yau algebra of dimension
$d$ and of Gorenstein parameter $\ell$,
we see that $\RuHom_{A^e}(A, A^e)\cong {_{\nu}A}(\ell)[-d]$ in $\cD(\GrMod A^e)$
where $\nu \in \GrAut A$ is the Nakayama
automorphism of $A$. Since
$$
\begin{array}{lll}
\RuHom_A(R, A)& \cong \RuHom_A(R\lotimes _AA\lotimes _AA, A) \\
& \cong \RuHom_A(A\lotimes _{A^e}(R\otimes _kA), A) & \qquad \textnormal {[Lemma \ref{lem.bibi} (1)]} \\
& \cong \RuHom_{A^e}(A, \RuHom_A(R\otimes _kA, A)) \\
& \cong \RuHom_{A^e}(A, DR\otimes _kA) & \qquad \textnormal {[Lemma \ref{lem.bibi2}]} \\
& \cong \RuHom_{A^e}(A, (DR\otimes _kA)\lotimes _{A^e}A^e) \\
& \cong (DR\otimes _kA)\lotimes _{A^e}\RuHom_{A^e}(A, A^e) & \qquad \textnormal {[$A\in \operatorname{perf}^{\ZZ}A^e$]}\\
& \cong (DR\otimes _kA)\lotimes _{A^e}{_{\nu}A}(\ell)[-d] \\
& \cong A\lotimes _A{_{\nu}A}\lotimes _ADR(\ell)[-d] & \qquad \textnormal {[Lemma \ref{lem.bibi} (2)]}\\
& \cong {_{\nu}DR}(\ell)[-d],
\end{array}
$$
in $\cD(\GrMod A^e)$ (in $\cD^b(\GrMod R^e)$), $A$ is an AS-regular
algebra over $R$ of dimension $d$ and of Gorenstein parameter
$\ell$.
\end{proof}

\begin{corollary} \label{cor.AS}
If $S$ is a noetherian AS-regular algebra over $k$ of dimension
$d$ and of Gorenstein parameter $\ell$, and $G\leq \GrAut S$
is a finite subgroup such that $\fchar k$ does not
divide $|G|$, then $S*G$ is a noetherian AS-regular algebra over
$kG$ of dimension $d$ and of Gorenstein parameter $\ell$.
\end{corollary}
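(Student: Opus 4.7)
The plan is to apply Theorem \ref{thm.sbcy} with $A=S*G$ and $R=(S*G)_0=kG$, so I need to verify four things: that $S*G$ is a noetherian locally finite $\NN$-graded algebra with degree-zero part $kG$, that $\gldim kG<\infty$, that $\gldim S*G=d$, and, the main point, that $S*G$ is a graded skew bimodule Calabi-Yau algebra of dimension $d$ and of Gorenstein parameter $\ell$.

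The first three items are essentially formal. Because $S$ is noetherian and $G$ is finite, $S*G$ is noetherian; local finiteness and $\NN$-grading are inherited from $S$; and $(S*G)_0 = S_0\otimes_k kG = kG$. Since $\fchar k$ does not divide $|G|$, Maschke's theorem gives that $kG$ is semisimple so $\gldim kG = 0$. Finally $\gldim S*G = \gldim S = d$ by \cite[Theorem 7.5.6]{MR}, which was already invoked in the proof of Lemma \ref{lem.amis}.

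The substantive step is producing the skew bimodule Calabi-Yau structure on $S*G$. Since $S$ is AS-regular of dimension $d$ and Gorenstein parameter $\ell$, by \cite[Lemma 1.2]{RRZ} (the converse direction to Theorem \ref{thm.sbcy} in the connected graded setting) $S$ is graded skew bimodule Calabi-Yau of the same dimension and Gorenstein parameter, so $S\in \operatorname{perf}^{\ZZ}S^e$ and $\RuHom_{S^e}(S, S^e)\cong {_\nu S}(\ell)[-d]$ in $\cD(\GrMod S^e)$ for the Nakayama automorphism $\nu\in \GrAut S$. The plan is to transfer this along the faithfully flat ring extension $S^e \hookrightarrow (S*G)^e$. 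Exploiting the identification $(S*G)^e\cong (S*G)\otimes_S S^e \otimes_S (S*G)$ as graded $S^e$-bimodules (a consequence of $S*G$ being a free graded $S$-module of finite rank on both sides), together with the bimodule manipulations in Lemmas \ref{lem.bibi} and \ref{lem.bibi2}, I obtain
$$\RuHom_{(S*G)^e}(S*G,(S*G)^e)\cong (S*G)\lotimes_S \RuHom_{S^e}(S,S^e) \lotimes_S (S*G)\cong {_{\tilde\nu}(S*G)}(\ell)[-d]$$
in $\cD(\GrMod (S*G)^e)$, for a Nakayama automorphism $\tilde\nu\in \GrAut(S*G)$ extending $\nu$ and incorporating the natural automorphism of $kG$ coming from conjugation. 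The grading shift remains $(\ell)$ because the $G$-action on $S$ is by graded automorphisms, and the cohomological shift remains $[-d]$ because $kG$ sits in degree zero and is concentrated in homological degree zero as a $kG^e$-module. Perfectness $S*G\in \operatorname{perf}^{\ZZ}(S*G)^e$ follows from $S\in \operatorname{perf}^{\ZZ}S^e$ together with semisimplicity of $kG$ by similar base-change considerations.

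The main obstacle will be carefully justifying the base-change identification for $\RuHom$ and tracking how left and right multiplications by elements $b*g\in S*G$ interact through the chain of isomorphisms; this is where the smash product twist must be matched against the Nakayama automorphism so that $\tilde\nu$ is genuinely an algebra automorphism of $S*G$. Once this bookkeeping is complete, Theorem \ref{thm.sbcy} yields the stated conclusion.
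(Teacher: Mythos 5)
Your overall skeleton matches the paper's: both arguments reduce to Theorem \ref{thm.sbcy} after checking that $S*G$ is noetherian, locally finite $\NN$-graded with $(S*G)_0=kG$ semisimple, and of global dimension $d$ (via \cite[Theorem 7.5.6]{MR}), and both identify the real content as showing that $S*G$ is graded skew bimodule Calabi--Yau of dimension $d$ and Gorenstein parameter $\ell$. The paper simply cites (the proof of) \cite[Theorem 4.1]{RRZ} for that last fact; you attempt to prove it directly, and this is where there is a genuine gap.

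The base-change formula you write,
$$\RuHom_{(S*G)^e}(S*G,(S*G)^e)\cong (S*G)\lotimes_S \RuHom_{S^e}(S,S^e)\lotimes_S (S*G),$$
cannot be correct. Adjunction for the induction functor $(S*G)^e\otimes_{S^e}-$ computes $\RuHom_{(S*G)^e}$ of the \emph{induced} module $(S*G)^e\otimes_{S^e}S\cong (S*G)\otimes_S(S*G)$, not of $S*G$ itself, and $(S*G)\otimes_S(S*G)$ is free of rank $|G|$ as a one-sided $S*G$-module. Hence the right-hand side of your formula is (a twist of) $|G|$ copies of $S*G$, so it cannot equal ${}_{\tilde\nu}(S*G)(\ell)[-d]$ on Hilbert-series grounds alone. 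The correct route uses that $S\subset S*G$ is a separable extension --- the element $\frac{1}{|G|}\sum_{g\in G}(1*g)\otimes(1*g^{-1})$ splits the multiplication map $S*G\otimes_S S*G\to S*G$ as bimodules --- so that $\RuHom_{(S*G)^e}(S*G,(S*G)^e)$ is only a direct \emph{summand} of the induced complex. Identifying which summand, and hence computing the Nakayama automorphism of $S*G$, is the substantive step; the answer involves the winding automorphism of $kG$ attached to the homological determinant of the $G$-action, not merely ``the natural automorphism of $kG$ coming from conjugation'' as you suggest. This is precisely the content of \cite[Theorem 4.1]{RRZ}, which the paper invokes; without it, or an equivalent analysis of the summand, your argument does not close.
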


\begin{proof}
Since $S$ is locally finite $\NN$-graded and $G$ is finite, $S*G$ is
locally finite $\NN$-graded.  Since $\fchar k$ does not divide $|G|$, we see that $(S*G)_0=kG$ is
semi-simple, that is, $\gldim kG=0$.  Since $S$ is connected graded AS-regular and $kG$ is semisimple, $S*G$ is graded skew bimodule Calabi-Yau of dimension $d$ and of Gorenstein parameter $\ell$ by (the proof of) \cite [Theorem 4.1]{RRZ}.  
Since $S$ is noetherian, 
$S*G$ is also
noetherian. By \cite [Theorem 7.5.6]{MR}, $\gldim S*G=\gldim S=d$.
By Theorem \ref{thm.sbcy},
$S*G$ is an AS-regular algebra over $kG$
of dimension $d$ and of Gorenstein parameter $\ell$.
\end{proof}

\begin{theorem} \label{thm.**}
Let $S$ be a noetherian AS-regular algebra over $k$ of dimension
$d\geq 2$, and $G\leq \GrAut S$ a finite ample subgroup. If
$\fchar k$ does not divide $|G|$, then the map
$$S * G \rightarrow \uEnd_{S^G}(S); \quad s * g \mapsto [t \mapsto s g(t)]$$
is an isomorphism of graded algebras.
\end{theorem}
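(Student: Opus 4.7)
The plan is to identify both $S*G$ and $\uEnd_{S^G}(S)$ with graded endomorphism rings in the respective tails categories, then transfer along the ample equivalence.

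By Corollary~\ref{cor.AS}, $S*G$ is noetherian AS-regular over $kG$ of dimension $d$; in particular it is AS-Gorenstein over $kG$, so $\uH^i_\fm(S*G) = 0$ for $i < d$ and $\depth(S*G) = d \geq 2$. Also, $S$ is a finitely generated graded right $S^G$-module, and graded local cohomology of $S$ computed over $S^G$ agrees with that computed over $S$ (the two augmentation ideals generate the same topology because $S$ is finite over $S^G$), so $\depth_{S^G}(S) = \depth_S(S) = d \geq 2$. By \cite[Lemma 3.3]{Mmc}, these depth conditions yield natural isomorphisms of graded algebras
\begin{equation*}
S*G = \uEnd_{S*G}(S*G) \cong \uEnd_{\tails S*G}(\pi(S*G)), \qquad \uEnd_{S^G}(S) \cong \uEnd_{\tails S^G}(\pi S).
\end{equation*}

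By hypothesis the ample functor $(-)e : \tails S*G \to \tails S^G$ is an equivalence, and Lemma~\ref{lem.eGe}(2) shows that it sends $\pi(S*G)$ to $\pi((S*G)e) \cong \pi S$ via $\psi$. Any $k$-linear equivalence compatible with the shift functor induces an isomorphism of graded endomorphism rings of corresponding objects, so
\begin{equation*}
\uEnd_{\tails S*G}(\pi(S*G)) \cong \uEnd_{\tails S^G}(\pi S).
\end{equation*}
Composing the three isomorphisms above produces a graded-algebra isomorphism $\Phi : S*G \to \uEnd_{S^G}(S)$.

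To check that $\Phi$ coincides with the formula in the statement, I trace an element $s*g \in S*G$. Under $S*G = \uEnd_{S*G}(S*G)$ it corresponds to the left-multiplication endomorphism $\lambda_{s*g}$, which restricts to an endomorphism of the left ideal $(S*G)e$. Identifying $(S*G)e$ with $S$ via $\psi$ and using $(1*g)e = e$ (a consequence of $e = \frac{1}{|G|}\sum_{h\in G} 1*h$), one computes
\begin{equation*}
(s*g)(a*1)e = (sg(a)*g)e = (sg(a)*1)e,
\end{equation*}
so the corresponding endomorphism of $S$ is $a \mapsto sg(a)$, exactly the map claimed. The subtle technical point is the application of \cite[Lemma 3.3]{Mmc} to the non-connected graded algebra $S*G$; this is legitimate because Corollary~\ref{cor.AS} provides AS-regularity over the semisimple base $kG$, which delivers the depth and $\chi$-conditions needed to compare $\uEnd$ in $\grmod$ with $\uEnd$ in $\tails$.
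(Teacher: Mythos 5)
Your proposal is correct and follows essentially the same route as the paper: both sides are identified with graded endomorphism rings in the respective tails categories using the depth $\geq 2$ conditions, the identification is transferred along the ample equivalence $(-)e$ applied to $\pi(S*G)\mapsto \pi S$, and the composite is traced on elements via $(s*g)(t*1)e=(sg(t)*1)e$ exactly as you do. The only cosmetic difference is that for $S*G\cong \uEnd_{\tails (S*G)}(\pi(S*G))$ the paper cites \cite[Proposition 4.4]{MM}, which is stated for AS-regular algebras over $R=kG$, rather than extending \cite[Lemma 3.3]{Mmc} to the non-connected setting --- this handles precisely the subtlety you flag at the end.
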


\begin{proof}
Since $S\in \grmod S^G$, it follows that $\depth _{S^G}S=\depth
_SS=d\geq 2$.  Since $S^G$ is a noetherian connected graded algebra,
$$\uEnd_{S^G}(S)=B(\grmod S^G, S,
(1))\cong B(\tails S^G, \pi S, (1))$$ as graded algebras by \cite
[Lemma 3.3]{Mmc}.  Since $S*G$ is a noetherian AS-regular algebra
over $kG$ of dimension $d\geq 2$ by Corollary \ref {cor.AS},
$$B(\tails (S*G), \pi (S*G), (1))\cong S*G$$ as graded algebras by
\cite [Proposition 4.4]{MM} (cf. \cite [Remark 4.5]{MM}).  By Lemma \ref{lem.right},
$$S*G\otimes_{S*G}(S*G)e\cong (S*G)e\cong S$$
in $\grmod S^G$.  Since $G$ is
ample for $S$, the equivalence functor
$$-\otimes _{S*G}(S*G)e: \tails (S*G)\to \tails (S^G)$$
induces an isomorphism of algebraic triples
$$(\tails (S*G), \pi (S*G),
(1))\to (\tails (S^G), \pi S, (1)),$$ hence
$$S*G\cong B(\tails (S*G), \pi (S*G),
(1))\cong B(\tails S^G, \pi S, (1))\cong \uEnd_{S^G}(S).$$
as graded algebras.  It is easy to see that the composition of the above isomorphisms $\Phi:S*G\to \uEnd_{S^G}(S)$ sends an element $s*g\in S*G$ to the right multiplication of $(s*g)$ on $S$ via the identification $(\star)\; S\to (S*G)e; \; t\mapsto (t*1)e$ in Lemma \ref{lem.right}, so 
\begin{align*}
\Phi(s * g)(t) 
& =^{(\star)}(s*g)(t*1)e 
=(sg(t)*g)\left ( \frac{1}{|G|}\sum _{h\in G}1*h\right ) \\
& =\frac{1}{|G|}\sum _{h\in G}sg(t)*gh=\frac{1}{|G|}\sum _{h\in G}sg(t)*h \\
& =(sg(t)*1)\left ( \frac{1}{|G|}\sum _{h\in G}1*h\right )=(sg(t)*1)e \\
& =^{(\star)}sg(t).  
\end{align*}
\end{proof}

\begin{lemma} \label{lem.stabhom}
Let $A$ be a noetherian connected graded algebra.
For $M, N \in \grmod A$,
there exists an exact sequence
$$\begin{array}{ccccccc}
N \otimes_A \uHom_A(M,A)
&\overset{\rho_{M}^{N}}{\rightarrow} 
&\uHom_A(M,N)
&\rightarrow 
&\uHom_A(M,N)/P(M,N)
&\rightarrow 
&0
\\
n \otimes \phi
&\mapsto 
&[m \mapsto  n\phi(m)]
\end{array}
$$
where $P(M,N)$ is a graded subgroup of $\uHom_A(M,N)$ consisting of morphisms factoring through a finitely generated projective graded $A$-module (i.e. free).
\end{lemma}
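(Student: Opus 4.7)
The assertion amounts to identifying $\operatorname{Im}(\rho_M^N) = P(M,N)$; the displayed sequence then becomes the canonical one terminating in the quotient by the image, which is automatically exact. The plan is to prove the two inclusions separately, after a routine check that $\rho_M^N$ is well-defined (which follows from the $A$-linearity of $\phi$, the $A$-balancedness of the tensor product, and a bookkeeping check on degrees so that elements of degree $d$ on the left land in $\uHom_A(M,N)_d$).

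For $\operatorname{Im}(\rho_M^N) \subseteq P(M,N)$, I would observe that a typical homogeneous element of $N \otimes_A \uHom_A(M,A)$ of degree $d$ is a finite sum $\sum_{i=1}^{p} n_i \otimes \phi_i$ with $\phi_i \in \Hom_A(M, A(c_i))$ and $n_i \in N_{d - c_i}$; then $\rho_M^N(\sum n_i \otimes \phi_i)$ factors through the finitely generated graded free module $F := \bigoplus_{i=1}^{p} A(c_i)$ as
$$M \xrightarrow{(\phi_1, \dots, \phi_p)} F \xrightarrow{(a_i) \mapsto \sum_i n_i a_i} N,$$
which exhibits the desired factorization through a finitely generated projective.

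For $P(M,N) \subseteq \operatorname{Im}(\rho_M^N)$, suppose $f \in \uHom_A(M,N)$ factors as $f = g \circ h$ through a finitely generated projective graded $A$-module $F$. Since $A$ is connected graded noetherian, $F$ is graded free, so $F \cong \bigoplus_{i=1}^{p} A(c_i)$. Writing $h = (\phi_i)_{i=1}^{p}$ with $\phi_i \in \uHom_A(M,A)_{c_i}$, and letting $n_i \in N$ be the image under $g$ of the canonical degree $-c_i$ generator of the $i$-th summand, one obtains $f(m) = \sum_i n_i \phi_i(m) = \rho_M^N(\sum_i n_i \otimes \phi_i)$. I do not anticipate a serious obstacle: the only non-formal ingredient is the classical fact that finitely generated graded projective modules over a connected graded noetherian ring are graded free, and this is built into the parenthetical ``(i.e.\ free)'' in the statement itself.
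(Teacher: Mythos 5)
Your proposal is correct and follows exactly the standard argument that the paper delegates to the reference (Yoshino, Lemma 3.8), namely identifying $\operatorname{Im}(\rho_M^N)$ with $P(M,N)$ by the two inclusions, using that finitely generated graded projectives over a noetherian connected graded algebra are graded free. The paper gives no further details, so you have simply written out the graded adaptation it had in mind.
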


\begin{proof}
The proof is analogous to the proof in the ungraded commutative case, as given in \cite[Lemma 3.8]{Y}.
\end{proof}

\begin{theorem} \label{thm.fdim}
Let $A$ be a noetherian graded algebra of finite global dimension.
Assume that there exists
an idempotent $e \in A$ such that
$eAe$ is a noetherian AS-Gorenstein algebra over $k$ of dimension $d\geq 2$ and $Ae \in \CM^{\ZZ}(eAe)$.
If $eAe$ is a graded isolated singularity, and the graded algebra homomorphism
$$\Phi: A \rightarrow  \uEnd_{eAe}(Ae); \quad a \mapsto [xe \mapsto axe]$$
is an isomorphism, 
then $A/(e)$ is finite dimensional over $k$.
\end{theorem}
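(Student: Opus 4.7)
The plan is to apply Lemma \ref{lem.stabhom} to the noetherian connected graded algebra $R := eAe$ with $M = N = Ae$, and then translate the resulting exact sequence through the given isomorphism $\Phi$ to identify the cokernel of the multiplication map with $A/(e)$, which will reduce the problem to finiteness of a stable endomorphism algebra.

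First I would establish an isomorphism of graded right $A$-modules $\alpha: eA \to \uHom_R(Ae,R)$ sending $x \mapsto (ue \mapsto xue)$. The map is well defined because $x=ex$ forces $xue = exue \in eAe = R$. Both injectivity and surjectivity follow from $\Phi$ being an isomorphism: if $\alpha(x) = 0$ then $x\cdot Ae = 0$, so $\Phi(x) = 0$ and $x=0$; conversely, for any $\phi \in \uHom_R(Ae,R)$, applying $\Phi^{-1}$ to $Ae \xrightarrow{\phi} R \hookrightarrow Ae$ produces some $a \in A$ with $a\cdot Ae \subseteq R$, which forces $(1-e)a\cdot Ae = 0$ and hence $a = ea \in eA$.

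Next, Lemma \ref{lem.stabhom} applied to $R$ (noetherian connected graded, since AS-Gorenstein implies connected graded) with $M = N = Ae \in \CM^{\ZZ}(R) \subseteq \grmod R$ yields
$$Ae \otimes_R \uHom_R(Ae,R) \xrightarrow{\rho} \uEnd_R(Ae) \to \uEnd_R(Ae)/P(Ae,Ae) \to 0.$$
Under $\Phi$ and $\alpha$, the map $\rho$ becomes the multiplication map $Ae \otimes_R eA \to A$, $ae \otimes eb \mapsto aeb$, whose image is precisely the two-sided ideal $AeA = (e)$. Hence $A/(e) \cong \uEnd_R(Ae)/P(Ae,Ae)$.

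Finally, I would show that this stable endomorphism algebra is finite-dimensional. Choose a short exact sequence $0 \to \Omega(Ae) \to P \to Ae \to 0$ in $\grmod R$ with $P$ a finitely generated projective, so $\Omega(Ae) \in \grmod R$. Applying $\uHom_R(Ae,-)$ produces an exact sequence $\uHom_R(Ae,P) \to \uHom_R(Ae,Ae) \to \uExt^1_R(Ae,\Omega(Ae))$. The image of the first map coincides with $P(Ae,Ae)$: any morphism factoring through a finitely generated projective $P'$ lifts via projectivity of $P'$ and surjectivity of $P\to Ae$ to factor through $P$. Therefore $\uEnd_R(Ae)/P(Ae,Ae)$ embeds into $\uExt^1_R(Ae,\Omega(Ae))$, which is finite-dimensional over $k$ by \cite[Lemma 5.7]{U} since $R$ is an AS-Gorenstein graded isolated singularity of dimension $d\geq 2$, $Ae \in \CM^{\ZZ}(R)$, and $\Omega(Ae) \in \grmod R$. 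The main technical point is the first-step identification $eA \cong \uHom_R(Ae,R)$, which combines the isomorphism $\Phi$ and the decomposition $Ae = eAe \oplus (1-e)Ae$ to translate the ideal $(e)$ into the stable endomorphism algebra where the finiteness machinery of \cite{U} can be brought to bear.
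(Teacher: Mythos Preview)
Your proof is correct and follows the same overall strategy as the paper: identify $A/(e)$ with (a quotient of) the stable endomorphism algebra of $Ae$ via Lemma~\ref{lem.stabhom}, then invoke \cite[Lemma 5.7]{U}. The execution differs in two places worth noting.

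First, you prove directly that $\alpha:eA\to \uHom_{R}(Ae,R)$ is an \emph{isomorphism}, exploiting injectivity of $\Phi$ twice (once for injectivity of $\alpha$, once to force $a=ea$ in the surjectivity step). The paper instead shows only that the analogous map $\Psi:eA\to \uHom_{R}(Ae,R)$ is \emph{surjective}, via a diagram chase through $\uHom_A(\psi,A)$ and $\uHom_A(\psi,eA)$, and then uses the five lemma to obtain merely an injection $A/(e)\hookrightarrow \uEnd_{R}(Ae)/P(Ae,Ae)$. Your direct argument is shorter and yields the stronger conclusion $A/(e)\cong \uEnd_{R}(Ae)/P(Ae,Ae)$, though the weaker injection already suffices for the theorem.

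Second, for the finiteness of $\uEnd_{R}(Ae)/P(Ae,Ae)$ the paper appeals to the Auslander--Reiten duality \cite[Corollary 4.6]{U} to rewrite this as $D\uExt^1_{R}(Ae,\tau Ae)$ before invoking \cite[Lemma 5.7]{U}. Your syzygy argument, embedding the stable endomorphism algebra into $\uExt^1_{R}(Ae,\Omega(Ae))$, is more elementary and avoids the AR machinery entirely; it uses only that $Ae\in\CM^{\ZZ}(R)$ and $\Omega(Ae)\in\grmod R$, which is exactly what \cite[Lemma 5.7]{U} requires.
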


\begin{proof}
Let $\psi: Ae \otimes_{eAe} eA \to A$ be a homomorphism defined by $\psi(ae\otimes eb)= aeb$.
Since the composition of the following homomorphisms
\[ \begin{array}{rlrl}
A &\cong \uHom_A(A,A)             & a & \mapsto [x \mapsto ax] \\
  &\overset{\uHom(\psi ,A)}{\longrightarrow } \uHom_A(Ae \otimes_{eAe} eA,A)    &   & \mapsto [xe\otimes ey \mapsto axey] \\
  &\cong \uHom_{eAe}(Ae, \uHom_A(eA,A))   &   & \mapsto [xe \mapsto [ey \mapsto axey]] \\
  &\cong \uHom_{eAe}(Ae, Ae)= \uEnd_{eAe}(Ae)  &   & \mapsto [xe \mapsto axe] 
\end{array} \]
is equal to $\Phi$, we see that $\uHom(\psi ,A)$ is an isomorphism by our assumption.
Let $\e: A \to eA$ be a split epimorphism defined by $\e(a)=ea$. Then the following commutative diagram
\begin{align*}
\xymatrix@C=4pc@R=1.5pc{
&\uHom_A(A,A) \ar[r]^(0.45){\uHom(\psi ,A)}_(0.42){\cong}  \ar@{->>}[d]_{\uHom(A ,\e)}
&\uHom_A(Ae \otimes_{eAe} eA,A) 
\ar@{->>}[d]^{\uHom(Ae\otimes eA ,\e)}
\\
&\uHom_{A}(A, eA) \ar[r]^(0.45){\uHom(\psi ,eA)}
&\uHom_A(Ae \otimes_{eAe} eA,eA)  
}
\end{align*}
implies that $\uHom(\psi ,eA)$ is surjective.
Let $\Psi$ be the composition of the following homomorphisms
\[ \begin{array}{rlrl}
eA &\cong \uHom_A(A,eA)             & eb & \mapsto [x \mapsto ebx] \\
  &\overset{\uHom(\psi,eA)}{\longrightarrow } \uHom_A(Ae \otimes_{eAe} eA,eA)    &   & \mapsto [xe\otimes ey \mapsto ebxey] \\
  &\cong \uHom_{eAe}(Ae, \uHom_A(eA,eA))   &   & \mapsto [xe \mapsto [ey \mapsto ebxey]] \\
  &\cong \uHom_{eAe}(Ae, eAe)  &   & \mapsto [xe \mapsto ebxe].
\end{array}\]
Since $\uHom(\psi,eA)$ is surjective, $\Psi$ is surjective, so 
$$\Phi':=Ae\otimes _{eAe}\Psi:Ae\otimes _{eAe}eA\to Ae\otimes _{eAe}\uHom_{eAe}(Ae, eAe)$$ is surjective.
By Lemma \ref{lem.stabhom}, we have a commutative diagram
\begin{align*}
\xymatrix@C=2pc@R=1pc{
Ae \otimes_{eAe} eA \ar[r]^{\psi}  \ar@{->>}[d]^{\Phi'} 
&A \ar[r] \ar[d]_{\cong}^{\Phi} \ar[r]
&A/(e) \ar[r] \ar@{.>}[d]
&0\\
Ae \otimes_{eAe} \uHom_{eAe}(Ae, eAe) \ar[r]^(0.6){\rho_{Ae}^{Ae}}
&\uEnd_{eAe}(Ae) \ar[r]
&\frac{\uEnd_{eAe}(Ae)}{P(Ae,Ae)} \ar[r]
&0}
\end{align*}
with exact rows.
By the five lemma, the induced map $A/(e) \to \uEnd_{eAe}(Ae)/P(Ae,Ae)$ is injective.
Since $eAe$ is a noetherian AS-Gorenstein graded isolated singularity of dimension $d \geq 2$,
we have
\begin{align*}
\uEnd_{eAe}(Ae)/P(Ae,Ae)
\cong \uEnd_{\uCM^{\ZZ}{eAe}}(Ae)
\cong D\uExt^1_{eAe}(Ae, \tau Ae)
\end{align*}
by \cite[Corollary 4.6]{U} where $\uCM^{\ZZ}(A)$ is the stable category of $\CM^{\ZZ} (A)$
and $\t$ is the Auslander-Reiten translation on $\uCM^{\ZZ} (A)$.
By \cite[Lemma 5.7]{U}, $\uExt^1_{eAe}(Ae, \tau Ae)$ is finite dimensional over $k$, so $A/(e)$ is finite dimensional over $k$.
\end{proof}

\begin{theorem} \label{thm.ampeq}
Let $S$ be a noetherian AS-regular algebra over $k$ of dimension $d\geq 2$
and $G\leq \HSL(S)$ a finite subgroup 
such that 
$\fchar k$ does not divide $|G|$.
Then the following are equivalent.
\begin{enumerate}
\item $G$ is ample for $S$.
\item $S^G$ is a graded isolated singularity, and 
$$\Phi: S * G \rightarrow \uEnd_{S^G}(S); \quad s * g \mapsto [t \mapsto s g(t)]$$
is an isomorphism of graded algebras.
\item $S*G/(e)$ is finite dimensional over $k$ where $e = \frac{1}{|G|}\sum_{g\in G}1*g \in S*G$.
\end{enumerate}
\end{theorem}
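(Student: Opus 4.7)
The plan is to establish the cyclic chain $(3) \Rightarrow (1) \Rightarrow (2) \Rightarrow (3)$. The first two arrows essentially package together results already proved above; the new content lies in the closing implication $(2) \Rightarrow (3)$, which I would obtain as a direct application of Theorem \ref{thm.fdim}.

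The implication $(3) \Rightarrow (1)$ is immediate from Theorem \ref{thm.ample} applied to $A = S$. For $(1) \Rightarrow (2)$: since $S$ is AS-regular, $\gldim S = d < \infty$, so Lemma \ref{lem.amis} yields that $S^G$ is a graded isolated singularity, while the isomorphism statement for $\Phi$ is precisely the content of Theorem \ref{thm.**}.

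For $(2) \Rightarrow (3)$, I would apply Theorem \ref{thm.fdim} with $A = S * G$ and $e = \tfrac{1}{|G|}\sum_{g \in G} 1 * g$, checking each hypothesis in turn. First, Corollary \ref{cor.AS} tells us that $S * G$ is noetherian AS-regular over $kG$ of dimension $d$, so in particular noetherian of finite global dimension. Next, Lemma \ref{lem.eGe}(1) identifies $e(S * G)e$ with $S^G$, and Theorem \ref{thm.asg} (applicable since $G \leq \HSL(S)$) shows this is noetherian AS-Gorenstein of dimension $d \geq 2$. For the Cohen-Macaulay condition, Lemma \ref{lem.eGe}(2) identifies $(S * G)e$ with $S$ as a graded right $S^G$-module, and since $S$ is AS-regular its local cohomology at its own irrelevant ideal vanishes below degree $d$; because $S$ is finite as a graded $S^G$-module, the irrelevant ideal of $S^G$ generates an ideal in $S$ whose radical is the irrelevant ideal of $S$, so the two torsion functors on $S$ coincide, giving $S \in \CM^{\ZZ}(S^G)$. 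The remaining two hypotheses, namely that $S^G$ is a graded isolated singularity and that $\Phi : S * G \to \uEnd_{S^G}(S)$ is an isomorphism, are exactly the assumptions of (2); the computation at the end of the proof of Theorem \ref{thm.**} shows that this $\Phi$ coincides, under the isomorphism $(S * G)e \cong S$ of Lemma \ref{lem.eGe}(2), with the canonical map $A \to \uEnd_{eAe}(Ae)$ appearing in Theorem \ref{thm.fdim}. With all hypotheses verified, Theorem \ref{thm.fdim} yields that $(S * G)/(e)$ is finite dimensional over $k$, which is (3).

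The main obstacles are the Cohen-Macaulay check for $S$ over $S^G$ and the careful verification that the map $\Phi$ of the theorem matches the one required by Theorem \ref{thm.fdim}; the remaining verifications are bookkeeping via Lemma \ref{lem.eGe} and the previously established structural results for $S * G$ and $S^G$.
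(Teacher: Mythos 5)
Your proposal is correct and follows essentially the same route as the paper: the same cyclic chain with (3)$\Rightarrow$(1) via Theorem \ref{thm.ample}, (1)$\Rightarrow$(2) via Lemma \ref{lem.amis} and Theorem \ref{thm.**}, and (2)$\Rightarrow$(3) by applying Theorem \ref{thm.fdim} to $A=S*G$ with the idempotent $e$. Your extra detail on the Cohen--Macaulay check and on matching $\Phi$ with the canonical map of Theorem \ref{thm.fdim} is sound; the paper simply cites Lemma \ref{lem.right} and leaves these verifications implicit.
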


\begin{proof}
(1) $\Rightarrow $ (2): This follows from Lemma \ref{lem.amis} and Theorem \ref{thm.**}. 

(2) $\Rightarrow $ (3):  Since $S*G$ is a noetherian graded algebra of finite global dimension, $e(S*G)e\cong S^G$ is a noetherian AS-Gorenstein algebra over $k$ of dimension $d\geq 2$ by Lemma \ref{lem.eGe} and Theorem \ref{thm.asg}, and $(S*G)e\cong S\in \CM^{\ZZ}(S^G)$ by Lemma \ref{lem.right}, this follows from Theorem \ref{thm.fdim}. 

(3) $\Rightarrow $ (1): This follows from Theorem \ref{thm.ample}. 
\end{proof}

\begin{corollary} \label{cor.ampeq}
Let $S=k[x_1, \dots, x_d]$ be a commutative polynomial algebra 
such that $\fchar k =0$, $\deg x_i=1$ and $d\geq 2$,
and let $G\leq \SL(d, k)$ be a finite subgroup.
Then the following are equivalent.
\begin{enumerate}
\item $G$ is ample.
\item $S^G$ is an isolated singularity.
\item $S*G/(e)$ is finite dimensional over $k$
 where $e = \frac{1}{|G|}\sum_{g\in G}1*g \in S*G$.
\item $G$ acts freely on $\AA^n\setminus \{0\}$.
\end{enumerate}
\end{corollary}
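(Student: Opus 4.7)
The plan is to deduce this corollary from Theorem \ref{thm.ampeq} together with the classical invariant-theoretic criterion for $S^G$ to be an isolated singularity. Under the hypotheses, $S$ is a noetherian AS-regular algebra of dimension $d$ generated in degree one with $\HSL(S) = \SL(d,k)$, and $\fchar k = 0$ does not divide $|G|$, so Theorem \ref{thm.ampeq} applies. It yields the equivalence of ampleness of $G$, the finite-dimensionality of $S*G/(e)$, and the conjunction that $S^G$ is a graded isolated singularity together with the isomorphism $\Phi: S*G \to \End_{S^G}(S)$.

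To match this conjunction with (2), I would use two observations. First, as noted after the definition of graded isolated singularity in the preliminaries, for a commutative graded algebra generated in degree one, being a graded isolated singularity coincides with being an isolated singularity in the usual sense, and this applies to $S^G$ (which is generated in degree one since $G \leq \GL(d,k)$ acts linearly). Second, the isomorphism $\Phi$ holds automatically in the present setting by \cite[Theorem 3.2]{IT}. Together these give (1) $\Leftrightarrow$ (2) $\Leftrightarrow$ (3).

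It then remains to establish (2) $\Leftrightarrow$ (4), which is classical. For (4) $\Rightarrow$ (2): if $G$ acts freely on $\AA^d \setminus \{0\}$, then the finite morphism $\AA^d \to \Spec S^G$ is \'etale away from the origin, so $\Spec S^G$ is smooth outside the image of the origin, which is the unique graded maximal ideal of $S^G$; hence $S^G$ is an isolated singularity. For (2) $\Rightarrow$ (4): suppose $0 \neq v \in \AA^d$ has nontrivial stabilizer $G_v$. A Luna-slice argument identifies the completion of $S^G$ at the image of $v$ with $k[[V]]^{G_v}$ where $V$ is the linear $G_v$-representation on $T_v \AA^d$; by the Chevalley--Shephard--Todd theorem this ring is regular if and only if $G_v$ is generated by pseudoreflections. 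Since every element of $G_v \leq \SL(d,k)$ has determinant one while a nontrivial pseudoreflection has determinant a nontrivial root of unity, $G_v$ contains no nontrivial pseudoreflections, so the completion is not regular, producing a non-origin singular point of $\Spec S^G$ and contradicting (2).

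The main obstacle is the step (2) $\Rightarrow$ (4), which requires combining Chevalley--Shephard--Todd with the $\SL$-constraint to rule out pseudoreflections in stabilizers; the other equivalences are essentially direct translations from Theorem \ref{thm.ampeq} via \cite[Theorem 3.2]{IT}.
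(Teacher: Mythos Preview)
Your approach is essentially the paper's: use \cite[Theorem 3.2]{IT} to get $\Phi$ automatically, then invoke Theorem \ref{thm.ampeq} for (1) $\Leftrightarrow$ (2) $\Leftrightarrow$ (3), and handle (2) $\Leftrightarrow$ (4) separately. The paper simply cites \cite[8.2]{IY} for (2) $\Leftrightarrow$ (4), whereas you sketch a direct Luna--slice / Chevalley--Shephard--Todd argument; your sketch is correct and is in fact what underlies the cited result, so this is the same route with more detail filled in.

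One slip: your parenthetical claim that $S^G$ is generated in degree one is false in general (e.g.\ $G=\{\pm I\}$ acting on $k[x,y]$ gives $S^G=k[x^2,xy,y^2]$). You invoke this to identify ``graded isolated singularity'' with ``isolated singularity'' via the remark after the definition, but that remark is stated only for algebras generated in degree one. The conclusion you want is still true for $S^G$---for a finitely generated commutative connected graded $k$-algebra, $\gldim(\tails A)<\infty$ is equivalent to $\Spec A$ being regular away from the irrelevant maximal ideal---but your justification does not establish it. The paper does not spell this out either and simply passes from Theorem \ref{thm.ampeq} to the statement of the corollary; a cleaner fix is to argue directly that $\Spec S^G\setminus\{0\}$ is regular iff $\tails S^G$ has finite global dimension (via the identification of $\tails S^G$ with coherent sheaves on the punctured cone modulo $\mathbb{G}_m$), or to cite a source for this equivalence in the not-necessarily-degree-one case. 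Also note, as the paper does, that $G\leq\SL(d,k)$ implies $G$ is small (no pseudoreflections), which is the hypothesis needed to apply \cite[Theorem 3.2]{IT}.
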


\begin{proof} Since $G\leq \SL(d, k)$, it follows that $G$ is small, so
$$\Phi: S * G \rightarrow \uEnd_{S^G}(S); \quad s * g \mapsto [t \mapsto s g(t)]$$ is an isomorphism of graded algebras by \cite [Theorem 3.2]{IT}.  Since $\SL(d, k)=\HSL(S)$, the equivalences (1) $\Leftrightarrow$ (2) $\Leftrightarrow $ (3) follows from Theorem \ref{thm.ampeq}.  
The equivalence (2) $\Leftrightarrow$ (4) follows from \cite [8.2]{IY}.  
\end{proof} 

\subsection{Beilinson Algebras} 

The following is a noncommutative generalization of a Beilinson algebra.  

\begin{definition} (\cite [Definition 4.7]{MM}) 
The Beilinson algebra of an AS-regular algebra over $R$ of
Gorenstein parameter $\ell$ is defined by
$$\nabla S:=\End _S\left(\bigoplus _{i=0}^{\ell-1}S(i)\right)=\begin{pmatrix} S_0 & S_1 & \cdots & S_{\ell-1} \\
0 & S_0 & \cdots & S_{\ell-2} \\
\vdots & \vdots & \ddots & \vdots \\
0 & 0 & \cdots & S_0\end{pmatrix}$$ with the multiplication
$(a_{ij})(b_{ij})=\left (\sum _{k=0}^{\ell-1}a_{kj}b_{ik}\right )$. (We impose this multiplication formula so that $\nabla S=\left(S^{[\ell]}\right)_0$.)  
\end{definition}

Note that if a group $G$ acts on an AS-regular algebra $S$, then $G$
naturally acts on $\nabla S$ by $g((a_{ij})):=(g(a_{ij}))$.  It is easy to check the following result. 

\begin{lemma} \label{lem.impo}
Let $S$ be a noetherian AS-regular algebra over $k$ of Gorenstein
parameter $\ell$ and $G\leq \GrAut S$ a finite subgroup. If
$\fchar k$ does not divide $|G|$, then $(\nabla
S)*G\cong \nabla (S*G)$ as algebras.
\end{lemma}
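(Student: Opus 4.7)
The plan is to write down the obvious level-by-level identification between the two algebras and verify that it respects multiplication. More precisely, I will define a $k$-linear map
\[
\Phi : (\nabla S)*G \longrightarrow \nabla(S*G), \qquad (a_{ij})*g \longmapsto (a_{ij}*g),
\]
and show it is a bijective ring homomorphism. First, Corollary \ref{cor.AS} gives that $S*G$ is a noetherian AS-regular algebra over $kG$ of the same Gorenstein parameter $\ell$, so $\nabla(S*G)$ is defined with the same $\ell \times \ell$ upper-triangular matrix shape as $\nabla S$. Using $(S*G)_n = S_n \otimes_k kG$, the $(i,j)$-entry space of $\nabla(S*G)$, namely $(S*G)_{j-i}$ for $i\leq j$, coincides with $S_{j-i}\otimes_k kG$. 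On the other side, $(\nabla S)*G = \nabla S \otimes_k kG$ as a vector space and $\nabla S = \bigoplus_{0\leq i \leq j \leq \ell-1} S_{j-i}$ (placed in the $(i,j)$-entry). Expanding both direct sums and tensoring with $kG$ shows $\Phi$ is a $k$-linear bijection.

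For the ring homomorphism check, the key point is that $G$ acts entrywise on $\nabla S$, i.e.\ $g\cdot (b_{ij}) = (g(b_{ij}))$. Using the $(\nabla S)*G$-multiplication combined with the Beilinson product rule $(a_{ij})(b_{ij}) = \bigl(\sum_k a_{kj}b_{ik}\bigr)$,
\[
((a_{ij})*g)\cdot((b_{ij})*h) = \bigl((a_{ij})\cdot g((b_{ij}))\bigr)*gh = \Bigl(\textstyle\sum_k a_{kj}\,g(b_{ik})\Bigr)*gh,
\]
and $\Phi$ sends this element to the matrix whose $(i,j)$-entry is $\sum_k a_{kj}g(b_{ik})*gh$. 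Computing $\Phi((a_{ij})*g)\cdot\Phi((b_{ij})*h) = (a_{ij}*g)\cdot(b_{ij}*h)$ directly in $\nabla(S*G)$ via the same product rule, now applied to $S*G$, gives $(i,j)$-entry
\[
\textstyle\sum_k (a_{kj}*g)(b_{ik}*h) = \sum_k a_{kj}g(b_{ik})*gh,
\]
using the defining multiplication of the skew group algebra $S*G$. The two expressions agree, so $\Phi$ is multiplicative. It sends $(\mathrm{Id}_{\nabla S})*1_G$ to the identity matrix of $\nabla(S*G)$, hence is a unital algebra isomorphism.

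The main ``obstacle'' is really just bookkeeping: one must respect the Beilinson algebra's slightly unusual multiplication convention (where the $(i,j)$-entry of the product is $\sum_k a_{kj}b_{ik}$ rather than $\sum_k a_{ik}b_{kj}$) and align the entrywise $G$-action on $\nabla S$ with the skew-group multiplication on individual entries in $\nabla(S*G)$. No further homological input is needed beyond the appeal to Corollary \ref{cor.AS} to guarantee that $\nabla(S*G)$ makes sense.
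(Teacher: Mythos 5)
Your proof is correct and is exactly the verification the paper has in mind: the paper states this lemma with the remark that it ``is easy to check'' and omits the proof, and your entrywise identification $(a_{ij})*g \mapsto (a_{ij}*g)$, together with the check against the Beilinson multiplication convention $(a_{ij})(b_{ij})=(\sum_k a_{kj}b_{ik})$ and the appeal to Corollary \ref{cor.AS} so that $\nabla(S*G)$ is defined with the same parameter $\ell$, is precisely the intended computation.
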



\begin{theorem} \label{thm.tilt}
Let $S$ be a noetherian AS-regular algebra over $k$ of dimension
$d\geq 2$ and of Gorenstein parameter $\ell$, and $G\leq
\GrAut S$ a finite ample subgroup. If $\fchar k$
does not divide $|G|$, then
$$\cD^b(\tails S^G)\cong \cD^b(\mod (\nabla S)*G)\cong \cD^b\left(\mod \End _{S^G}\left(\bigoplus
_{i=0}^{\ell-1}S(i)\right)\right).$$ Moreover, $\bigoplus _{i=0}^{\ell-1}\pi S(i)$
is a tilting object in $\cD^b(\tails S^G)$.
\end{theorem}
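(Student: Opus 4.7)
The strategy is to apply the Beilinson-type derived equivalence for AS-regular algebras over a semisimple base (\cite{MM}) to the noetherian AS-regular algebra $S*G$ over $kG$, and then transport the resulting tilting object along the ampleness equivalence $(-)e:\tails(S*G)\to \tails S^G$.

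First, set $T:=S*G$. By Corollary \ref{cor.AS}, $T$ is a noetherian AS-regular algebra over $kG$ of dimension $d\geq 2$ and Gorenstein parameter $\ell$. The Beilinson-type tilting theorem for AS-regular algebras over a semisimple base then produces a tilting object $\bigoplus_{i=0}^{\ell-1}\pi T(i)$ in $\cD^b(\tails T)$ whose graded endomorphism algebra is $\nabla T$, so that
\[\cD^b(\tails(S*G))\cong \cD^b(\mod \nabla(S*G)).\]

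Next, the ampleness of $G$ gives an equivalence $(-)e:\tails(S*G)\to \tails S^G$ of abelian categories, which lifts to a triangle equivalence of bounded derived categories. By Lemma \ref{lem.right}, $(S*G)(i)e\cong S(i)$ in $\grmod S^G$, so the above tilting object is sent to $\bigoplus_{i=0}^{\ell-1}\pi S(i)$, which is therefore a tilting object in $\cD^b(\tails S^G)$. Combining this with Lemma \ref{lem.impo}, which identifies $\nabla(S*G)\cong (\nabla S)*G$, yields the first isomorphism
\[\cD^b(\tails S^G)\cong \cD^b(\mod (\nabla S)*G).\]

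For the second isomorphism, invoke Theorem \ref{thm.**}: $S*G\cong \uEnd_{S^G}(S)$ as graded algebras. Under this identification $(S*G)(i)$ corresponds to $\uHom_{S^G}(S,S(i))$, so $\nabla(S*G)$, being $\End_{S*G}(\bigoplus_{i=0}^{\ell-1}(S*G)(i))_0$ in its matrix presentation, is identified with $\End_{S^G}\bigl(\bigoplus_{i=0}^{\ell-1}S(i)\bigr)$, giving the final equivalence.

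The main obstacle is the functorial bookkeeping: one must verify that $(-)e$ transports the canonical tilting object of $\tails(S*G)$ to $\bigoplus_{i=0}^{\ell-1}\pi S(i)$ compatibly with the Beilinson algebra structure, and that the isomorphism of Theorem \ref{thm.**} respects the matrix/Beilinson form of $\nabla$. Both reduce to an explicit comparison via Lemmas \ref{lem.right} and \ref{lem.impo}.
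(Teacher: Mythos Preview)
Your proposal is correct and follows essentially the same route as the paper: apply the Beilinson tilting theorem of \cite{MM} to the AS-regular algebra $S*G$ over $kG$ (using Corollary~\ref{cor.AS}), transport the tilting object $\bigoplus_{i=0}^{\ell-1}\pi(S*G)(i)$ along the ampleness equivalence $(-)e$ via Lemma~\ref{lem.right}, and identify $\nabla(S*G)\cong(\nabla S)*G$ via Lemma~\ref{lem.impo}. The only cosmetic difference is in deriving the second isomorphism $\nabla(S*G)\cong\End_{S^G}\bigl(\bigoplus_{i=0}^{\ell-1}S(i)\bigr)$: you route through Theorem~\ref{thm.**} and read off the matrix entries, whereas the paper stays inside the abelian equivalence, using that $(-)e$ induces $\End_{\tails(S*G)}\bigl(\bigoplus\pi(S*G)(i)\bigr)\cong\End_{\tails S^G}\bigl(\bigoplus\pi S(i)\bigr)$ and then invoking $\depth_{S^G}\bigl(\bigoplus S(i)\bigr)=d\ge2$ to pass from $\tails$ to $\grmod$ (\cite[Lemma~2.9]{Mmc}, \cite[Lemma~4.11]{MM}). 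The paper's route is slightly cleaner since the algebra structure is automatically preserved by the categorical equivalence, avoiding the ``bookkeeping'' you flag; but your argument via Theorem~\ref{thm.**} is valid too, since that theorem is itself proved by exactly this transport-of-structure reasoning.
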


\begin{proof}
Since $G$ is ample and $S*G$ is a noetherian AS-regular algebra over
$kG$ of dimension $d\geq 2$ and of Gorenstein parameter $\ell$ by
Corollary \ref{cor.AS},
$$\cD^b(\tails S^G)\cong \cD^b(\tails (S*G)) \cong \cD^b(\mod \nabla (S*G))\cong \cD^b(\mod
(\nabla S)*G)$$ by \cite [Theorem 4.14]{MM} and Lemma
\ref{lem.impo}. Since the equivalence functor
$$(-)e:\tails (S*G)\to \tails S^G$$
sends $\pi (S*G)(i)$ to $\pi (S*G)e(i)\cong \pi S(i)$
for $i\in \ZZ$ by Lemma \ref{lem.right}, and we have $\depth _{S^G}(\bigoplus
_{i=0}^{\ell-1}S(i))=d\geq 2$,
\begin{align*}
\nabla (S*G)  
& \cong
\End _{\tails (S*G)}\left(\bigoplus _{i=0}^{\ell-1}\pi (S*G)(i)\right) \\
& \cong \End _{\tails S^G}\left(\bigoplus _{i=0}^{\ell-1}\pi S(i)\right) \\
& \cong
\End _{S^G}\left(\bigoplus _{i=0}^{\ell-1}S(i)\right)
\end{align*}
as algebras by \cite [Lemma 4.11]{MM} and \cite [Lemma
2.9]{Mmc}.  Since $\bigoplus _{i=0}^{\ell-1}\pi (S*G)(i)$ is a tilting
object in $\cD^b(\tails S*G)$ by \cite [Proposition 4.3,
Proposition 4.4]{MM}, $\bigoplus _{i=0}^{\ell-1}\pi S(i)$ is a tilting
object in $\cD^b(\tails S^G)$.
\end{proof}

Let $S=k[x, y]$ be a polynomial algebra and $G\leq \SL(2, k)$ a finite subgroup. 
The classical McKay correspondence claims that
$$\cD^b\left(\widetilde {\Spec S^G}\right)\cong \cD^b(\mod S*G)$$ where $\widetilde {\Spec S^G}$ is the minimal resolution of $\Spec S^G$.
On the other hand, if $S$ is a noetherian AS-regular algebra over $k$ of dimension $d\geq 2$ and $G\leq \GrAut S$ is a finite ample subgroup, then $S^G$ is a graded isolated singularity by Lemma \ref{lem.amis}.  This tells that $\Proj S^G$ is smooth so that $\widetilde {\Proj S^G}=\Proj S^G$.
In this case, 
$$\cD^b\left(\widetilde {\Proj S^G}\right)=\cD^b(\Proj S^G)=\cD^b(\tails S^G)\cong \cD^b(\mod \nabla S*G)$$ by Theorem \ref{thm.tilt}, which is similar to McKay correspondence (see \cite {Mmc} for more details).  

It is interesting to compare with the following theorem.  

\begin{theorem} \label{thm.***}
Let $S$ be a noetherian AS-regular algebra over $k$ of dimension
$d\geq 2$, and $G\leq \HSL(S)$ a finite ample subgroup. If
$\fchar k$ does not divide $|G|$, then $S$ is a $(d-1)$-cluster tilting object in
$\CM^{\ZZ}(S^G)$.
\end{theorem}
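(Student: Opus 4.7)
The plan is to verify the hypotheses of the graded analogue of Iyama's theorem characterizing $(d-1)$-cluster tilting objects in $\CM^{\ZZ}(R)$ via noncommutative crepant resolutions: we need $S \in \CM^{\ZZ}(S^G)$ to be a generator whose graded endomorphism algebra is AS-regular of dimension exactly $d$.

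First I would gather the structural facts about $S$ and $S^G$. Because $G \leq \HSL(S)$, Theorem~\ref{thm.asg} shows $S^G$ is a noetherian AS-Gorenstein algebra of dimension $d$; since $G$ is ample, Lemma~\ref{lem.amis} upgrades this to $S^G$ being a graded isolated singularity. Lemma~\ref{lem.right} identifies $S \cong (S*G)e$ as a graded right $S^G$-module, and the complementary idempotent decomposition $1 = e + (1-e) \in S*G$ exhibits $S^G \cong e(S*G)e$ as a direct summand of $S$, so $S$ is a generator in $\grmod S^G$. Since $S$ is finitely generated over $S^G$ by \cite[Corollary 5.9]{Mfs}, and local cohomology over $S$ and over $S^G$ agree by the finiteness of the extension, the vanishing $\uH^i_{\fm_S}(S)=0$ for $i\neq d$ transfers to $\uH^i_{\fm_{S^G}}(S)=0$ for $i\neq d$, giving $S\in \CM^{\ZZ}(S^G)$.

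Next I would bring in the ample hypothesis through Theorem~\ref{thm.**}, which supplies the graded algebra isomorphism $\Phi:S*G \xrightarrow{\sim} \uEnd_{S^G}(S)$. Corollary~\ref{cor.AS} then shows $S*G$ is AS-regular over $kG$ of dimension $d$, and in particular $\gldim \uEnd_{S^G}(S) = \gldim(S*G) = d$. Thus $\uEnd_{S^G}(S)$ is a noncommutative crepant resolution of $S^G$ in the graded sense.

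Finally, I would invoke the graded version of Iyama's cluster tilting theorem: for an AS-Gorenstein graded isolated singularity $R$ of dimension $d \geq 2$ and a generator $M \in \CM^{\ZZ}(R)$ with $\gldim \uEnd_R(M) = d$, the module $M$ is a $(d-1)$-cluster tilting object in $\CM^{\ZZ}(R)$. Applied to $R=S^G$ and $M=S$, this yields the conclusion. The main obstacle is securing this Iyama-type criterion in the graded AS-Gorenstein setting; concretely, one must deduce both the vanishing $\uExt^i_{S^G}(S,S)=0$ for $1\leq i\leq d-2$ and the Ext-orthogonal characterization of $\add S$ from the fact that $\uEnd_{S^G}(S)\cong S*G$ is AS-regular of dimension exactly $d$. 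The vanishing range comes from regarding projective resolutions over $S*G$ of length $d$ and transporting them to $\CM^{\ZZ}(S^G)$ via the functor $\uHom_{S^G}(S,-)$, combined with the Auslander--Buchweitz formalism for $\CM^{\ZZ}(S^G)$ developed in \cite{U}; this step is where the assumption $d\geq 2$ and the isolated-singularity property (making $\uExt^i_{S^G}(S,-)$ finite dimensional for $i\geq 1$) are crucial.
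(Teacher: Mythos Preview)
Your approach is sound in outline and would work, but it takes a longer route than the paper's proof, and the ``main obstacle'' you identify is one the paper sidesteps entirely. After establishing that $S^G$ is a graded isolated singularity (Lemma~\ref{lem.amis}) and that $\uEnd_{S^G}(S)\cong S*G$ (Theorem~\ref{thm.**}), the paper does not appeal to the global dimension of the endomorphism ring or to an NCCR-style Iyama criterion. Instead it observes the elementary fact that $S*G$ is a finitely generated graded \emph{free} $S$-module (with basis $\{1*g : g\in G\}$), and then invokes \cite[Theorem~5.9]{U} directly: that result gives the $(d-1)$-cluster tilting conclusion from freeness of $\uEnd_{S^G}(S)$ over $S$ together with the isolated singularity hypothesis, with no further work required.

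Your route through $\gldim \uEnd_{S^G}(S)=d$ is the natural NCCR perspective and, as you note, forces you to port Iyama's criterion to the graded AS-Gorenstein setting---a nontrivial verification that the paper simply does not need. What you gain is a more conceptual picture (noncommutative crepant resolution $\Rightarrow$ cluster tilting); what the paper gains is brevity, since the freeness of $S*G$ over $S$ is immediate while the finite-global-dimension argument and the subsequent Ext-vanishing deduction are not. Your preliminary verifications ($S\in\CM^{\ZZ}(S^G)$, $S^G$ a summand of $S$) are correct but are already absorbed into the hypotheses of \cite[Theorem~5.9]{U}.
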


\begin{proof}
Since $G$ is ample for $S$, we see that $S^G$ is a
graded isolated singularity by Lemma \ref{lem.amis}.  By Theorem \ref{thm.**}, $\uEnd_{S^G}(S)\cong S*G$ is a
finitely generated graded free module over $S$, so $S$ is a
$(d-1)$-cluster tilting object in $\CM^{\ZZ}(S^G)$ by \cite [Theorem
5.9]{U}.
\end{proof}

\section{Noetherian AS-regular Algebras of Dimension 2}

Let $S$ be a noetherian AS-regular algebra of dimension 2 and $G\leq \HSL(S)$ a finite subgroup.  The purpose of this last section is to find a quiver $Q_{S, G}$ such that $\cD^b(\tails S^G)\cong \cD^b(\mod kQ_{S, G})$.  Throughout this section, we assume that $k$ is an algebraically closed field of characteristic 0.  

\subsection{Classification of Group Actions}  In this subsection, we will show that, for a noetherian AS-regular algebra $S$ of dimension 2, any finite subgroup $G\leq \HSL(S)$ is ample unless $S\cong k\<x, y\>/(xy\pm yx)$.  

\begin{lemma} \label{lem.hdet}
Let $S$ be a noetherian AS-regular algebra and $\s \in \GrAut S $.  
\begin{enumerate}
\item If $x \in S$ is a
regular normal element such that $\s(x) = ax$ for some $a\in k^\times$, then 
$\hdet_S \s = a (\hdet_A \s)$ where $A = S/(x)$.
\item If $S$ is Koszul and $\s|_{S_1} = (c_{ij})$, then $\s^{t}|_{S_1^!} = (c_{ji})$
defines an automorphism $\s^{t} \in \GrAut S^!$, and
$ \s^{t}(u) = (\hdet \s) u $
for any $u \in S^!_d$.
\end{enumerate}
\end{lemma}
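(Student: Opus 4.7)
The plan is to handle (1) and (2) separately, each by analyzing how $\s$ acts on a short or long exact sequence that implements the defining formula $\uH^d_\fm(\s) = (\hdet\s)^{-1} D(\s^{-1})$ in the top local-cohomology degree.

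For (1), I would start with the short exact sequence
$$0 \to S(-n) \xrightarrow{\,\cdot x\,} S \to A \to 0$$
of graded right $S$-modules, where $n = \deg x$. Because $\s(x) = ax$, after identifying the leftmost copy with $xS \subset S$ via $s \mapsto xs$, the induced action of $\s$ on $S(-n)$ becomes $a\s$ (since $\s(xs) = ax\,\s(s)$). Applying $\R\uG_\fm$ and using that $\uH^i_\fm(S) = 0$ for $i \neq d$ yields
$$0 \to \uH^{d-1}_\fm(A) \to \uH^d_\fm(S)(-n) \to \uH^d_\fm(S) \to \uH^d_\fm(A) \to 0$$
in which $\s$ acts as $a\cdot \uH^d_\fm(\s)$ on the second term. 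Via the identifications $\uH^d_\fm(S) \cong DS(\ell_S)$ and $\uH^{d-1}_\fm(A) \cong DA(\ell_A)$ with $\ell_A = \ell_S - n$, both $\uH^d_\fm(\s)$ and $\uH^{d-1}_\fm(\s)$ take the form $c\cdot D(\s^{-1})$ for the respective $\hdet^{-1}$ scalars, and matching these on the common piece forces $\hdet_A \s = a^{-1}\hdet_S \s$.

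The first sentence of (2) is a standard duality calculation: $\s|_{S_1}$ preserves the space of quadratic relations $R\subseteq S_1\otimes S_1$, so the transpose preserves the annihilator $R^\perp\subseteq S^!_1\otimes S^!_1$ and extends to a graded algebra automorphism of $S^!$; in a dual basis the matrix is $(c_{ji})$. For the scalar by which $\s^t$ acts on $S^!_d$, I would run the definition of $\hdet$ through the minimal Koszul free resolution of $k$,
$$\cdots \to S\otimes_k (S^!_i)^* \to \cdots \to S\otimes_k (S^!_1)^* \to S \to k \to 0,$$
which is $\s$-equivariant once one lets $\s^t$ act on the $S^!$-factors. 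Applying $\uHom_S(-,S)$ and using graded local duality identifies the one-dimensional space $\uExt^d_S(k,S)\cong k(\ell)$ with the socle of $\uH^d_\fm(S)\cong DS(\ell)$. Two computations of the action of $\s$ on this $k$-line---one from the Koszul side, giving the scalar of $\s^t$ on $S^!_d$ (up to a dualization), and one from the local-cohomology side, giving $\hdet \s$---can then be equated to yield the claim.

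The main obstacle, which is almost entirely bookkeeping, will be pinning down the various dualizations in part (2): on a one-dimensional vector space both an automorphism and its inverse act by scalars, so a misplaced transpose or contragredient risks flipping the answer to $(\hdet\s)^{-1}$. Part (1) has a parallel but milder issue, namely verifying that the normalizing automorphism associated with the normal element $x$ does not contribute an extra factor in the local-cohomology calculation.
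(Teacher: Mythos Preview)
The paper does not actually prove this lemma: it simply cites \cite[Proposition 2.4]{JiZ2} for part (1) and \cite[Lemma 4.6]{JZ} for part (2). Your proposal, by contrast, outlines a direct argument for each part. The strategy you describe for (1)---tracking the $\s$-action through the local-cohomology long exact sequence of $0\to S(-n)\xrightarrow{\cdot x}S\to A\to 0$ and comparing scalars on the top pieces---is the standard route and is essentially what \cite{JiZ2} does. Likewise, your plan for (2) via the $\s$-equivariant Koszul resolution and the identification $\uExt^d_S(k,S)\cong k(\ell)$ is in the spirit of \cite{JZ}. So your approach is correct and not genuinely different from the cited sources; it is simply more explicit than the paper, which defers entirely to the literature. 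Your own caveat about the bookkeeping in (2) is well placed: that is exactly where the work lies, and it is the reason the paper opts to cite rather than reprove.
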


\begin{proof}
(1) is \cite[Proposition 2.4]{JiZ2}.
(2) is a special case of \cite [Lemma 4.6]{JZ}.
\end{proof}

It is known that if $S$ is a noetherian AS-regular algebra of dimension 2, then $S\cong k\<x, y\>/(f)$ for some homogeneous element $f\in k\<x, y\>$ where $\deg x=m, \deg y=n\in \NN^+$.  If $\gcd(m, n)=r\neq 1$, then $S^{(r)}\cong  k\<x, y\>/(f)$ with $\deg x=m/r, \deg y=n/r$ so that $\gcd(\deg x, \deg y)=1$, and $\tails S^G\cong (\tails (S^{(r)})^G)^{\times r}$ for any $G\leq \GrAut S=\GrAut S^{(r)}$, so it is enough to study the case that $\gcd(\deg x, \deg y)=1$ in understanding the category $\tails S^G$.   

\begin{lemma} \label{lem.Ue}
Let $S=k\<x, y\>/(f)$ be a noetherian AS-regular algebra of dimension 2 such
that $\gcd(\deg x, \deg y)=1$.  The following is a list of all
possible $f$ (up to isomorphism of graded algebras), $\s \in \GrAut S$ and corresponding $\hdet \s$.

$$\begin{array}{|c|c|c|c|}
\hline (\deg x, \deg y) & f & \s & \hdet \s \\
\hline
(1, 1) & xy-yx & \begin{matrix} x\mapsto ax+by \\
y\mapsto cx+dy \end{matrix} & ad-bc \\
\hline
(1, 1) & xy+yx & \begin{matrix} x\mapsto ax \\
y\mapsto dy \end{matrix} & ad \\
\hline
(1, 1) & xy+yx & \begin{matrix} x\mapsto by \\
y\mapsto cx \end{matrix} & bc \\
\hline
(1, 1) & xy-\a yx, \a\neq 0, \pm 1 & \begin{matrix} x\mapsto ax \\
y\mapsto dy \end{matrix} & ad \\
\hline
(1, q), q\geq 2 & xy-yx & \begin{matrix} x\mapsto ax \\
y\mapsto cx^q+dy \end{matrix} & ad \\
\hline
(1, q), q\geq 2 & xy-\a yx, \a \neq 0, 1 & \begin{matrix} x\mapsto ax \\
y\mapsto dy \end{matrix} & ad \\
\hline
(p, q), p, q\geq 2 & xy-\a yx, \a\neq 0 & \begin{matrix} x\mapsto ax \\
y\mapsto dy \end{matrix} & ad \\
\hline
(1, q), q\geq 1 & xy-yx-x^{q+1} & \begin{matrix} x\mapsto ax \\
y\mapsto cx^q+a^qy \end{matrix} & a^{q+1} \\
\hline
\end{array}$$
\end{lemma}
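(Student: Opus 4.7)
The plan is to first enumerate the possible algebras $S=k\langle x,y\rangle/(f)$ and then, for each one, determine $\GrAut S$ by writing down the most general degree-preserving substitution and imposing the relation, and finally compute $\hdet\sigma$ via Lemma~\ref{lem.hdet}. The enumeration of $f$ uses the standard classification of noetherian AS-regular algebras of dimension $2$: any such algebra is of the form $k\langle x,y\rangle/(f)$ where, after normalizing, $f$ is either (i) a ``quantum-plane'' type $xy-\alpha yx$ for some $\alpha\in k^{\times}$ (possibly with $\alpha=-1$ requiring a separate split into $xy\pm yx$) or (ii) a ``Jordan-plane'' type $xy-yx-x^{q+1}$ with $\deg x=1$. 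Combined with the constraint $\gcd(\deg x,\deg y)=1$ and the requirement that the relation be homogeneous (so $\deg f = \deg x + \deg y$, forcing $q\deg x=\deg y$ when $f=xy-yx-x^{q+1}$), this cuts down the list to exactly the rows in the table.

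Next, for each normal form of $f$, I would compute $\GrAut S$. When $\deg x\ne\deg y$, a graded automorphism must send $x\mapsto ax+(\text{polynomial in lower generators})$ and $y\mapsto dy+(\text{polynomial of degree }\deg y\text{ in }x)$, so the only freedom is the scalars $a,d\in k^\times$ together with a term $cx^q$ in the image of $y$ when $\deg y=q\deg x$; this term survives the relation only in the commuting cases $xy-yx$ and $xy-yx-x^{q+1}$ (together with the constraint $d=a^q$ forced by matching the $x^{q+1}$ term in the Jordan case). When $\deg x=\deg y=1$ the substitution is a full matrix, and I would substitute into $xy-\alpha yx$ and read off the constraints: for generic $\alpha$ only diagonal matrices survive, for $\alpha=-1$ both diagonal and anti-diagonal matrices survive (giving the two rows $xy+yx$), and for $\alpha=1$ all of $\GL(2,k)$ acts. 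This is the most delicate step, but each case is a one-line substitution, so the obstacle is merely to be systematic.

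For the homological determinants, the Koszul cases (degrees all $1$) go through Lemma~\ref{lem.hdet}(2): in each case $\dim S^!_d=1$ and one identifies the generator (e.g.\ $xy-yx$ for the quantum and commutative cases, or $x^2,y^2$ for the $xy+yx$ case with the anti-diagonal automorphism), and a direct computation of $\sigma^t$ acting on this generator gives $\hdet\sigma$, producing $ad-bc$ in the commutative case and $ad$ or $bc$ in the split $xy+yx$ cases. For the non-Koszul cases (some generator of degree $\ge 2$), I apply Lemma~\ref{lem.hdet}(1) with $x$ a regular normal element: modding out by $x$ gives $A=k[y]$, on which any induced automorphism $y\mapsto dy$ has $\hdet_A=d$, while $\sigma(x)=ax$; thus $\hdet\sigma=a\cdot d$ in the quantum cases, and $\hdet\sigma=a\cdot a^q=a^{q+1}$ in the Jordan case (using $\sigma(y)=cx^q+a^qy$).

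The main obstacle is bookkeeping in the case $(\deg x,\deg y)=(1,1)$, $f=xy+yx$, where the automorphism group is disconnected and one must treat the diagonal and anti-diagonal components separately to obtain two distinct rows of the table; beyond that, every case reduces to a short computation using one of the two parts of Lemma~\ref{lem.hdet}.
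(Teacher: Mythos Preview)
Your proposal is correct and follows essentially the same route as the paper: classify $f$ via the Ore-extension description (the paper cites \cite[Proposition 3.3]{SZ} for this), determine $\GrAut S$ by direct substitution, and then compute $\hdet$ using Lemma~\ref{lem.hdet}(2) in the Koszul case $\deg x=\deg y=1$ and Lemma~\ref{lem.hdet}(1) (with $x$ the normal element) otherwise. One small slip: your description of the generator of $S^!_2$ is off --- for instance, for $f=xy+yx$ the Koszul dual is $k\langle x^*,y^*\rangle/((x^*)^2,(y^*)^2,x^*y^*-y^*x^*)$, so $S^!_2$ is spanned by $x^*y^*$, not by $x^2$ or $y^2$ --- but this does not affect the argument, since the action of $\sigma^t$ on that one-dimensional space is still computed correctly.
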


\begin{proof}
Since $S$ is isomorphic to a graded Ore extension of the polynomial algebra in one variable by \cite [Proposition 3.3]{SZ},
the list of $f$ follows from a calculation.
In each case, it is easy to find all graded algebra automorphisms of $S$.
If $\deg x=\deg y=1$, then $S$ is Koszul, so we can use Lemma \ref{lem.hdet} (2) to compute their homological determinants.
Otherwise, $\s(x)=ax$ for some $a\in k^\times$, so we can use Lemma \ref{lem.hdet} (1) to compute their homological determinants. 
\end{proof}

\begin{lemma} \label{lem.hsl}
Let  
$S=k\<x, y\>/(f)$ be a noetherian AS-regular algebra of dimension 2 such
that $\gcd(\deg x, \deg y)=1$. If $S\not \cong k\<x, y\>/(xy\pm yx)$, then every finite subgroup $G\leq \HSL(S)$ is of the form
$$G=\left\<\begin{pmatrix} \xi & 0 \\
0 & \xi^{-1}\end{pmatrix}\right\>$$
where $\xi$ is a primitive $r$-th root
of unity with $r=|G|$.
\end{lemma}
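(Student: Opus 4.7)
The plan is to proceed case by case using the classification of $(f,\sigma)$ in Lemma~\ref{lem.Ue}. The excluded algebras $k\langle x,y\rangle/(xy\pm yx)$ correspond to rows 1, 2, and 3; the remaining rows split into three families: the quantum planes of rows 4, 6, 7 (with $f=xy-\alpha yx$ and $\alpha$ constrained), the weighted commutative polynomial ring $k[x,y]$ of row 5 (with $\deg x=1,\deg y=q\ge 2$), and the Jordan-type algebra of row 8 (with $f=xy-yx-x^{q+1}$).

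For the quantum plane cases (rows 4, 6, 7), every element of $\GrAut S$ is already of the diagonal form $\operatorname{diag}(a,d)$ with $\hdet\sigma=ad$, so $\HSL(S)=\{\operatorname{diag}(a,a^{-1})\mid a\in k^\times\}\cong k^\times$. Any finite subgroup of $k^\times$ is cyclic of order $r=|G|$, generated by $\operatorname{diag}(\xi,\xi^{-1})$ for a primitive $r$-th root of unity $\xi$. Row 8 reduces to the same situation by a finite-order computation: writing $\sigma(x)=ax$ and $\sigma(y)=cx^q+a^q y$ with $a^{q+1}=1$, an easy induction gives $\sigma^n(y)=nca^{q(n-1)}x^q+a^{nq}y$, so if $\sigma$ has finite order $r$, then $rca^{q(r-1)}=0$ and the assumption $\fchar k=0$ forces $c=0$. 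Hence every finite-order element of $\HSL(S)$ is already diagonal $\operatorname{diag}(a,a^q)=\operatorname{diag}(a,a^{-1})$, and the same cyclicity argument applies.

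The substantive case is row 5, where nondiagonal elements of finite order can exist, so one must simultaneously diagonalize $G$. The key step is to apply Maschke's theorem to the two-dimensional subspace $V:=kx^q\oplus ky\subseteq S_q$, which is $G$-invariant because $\sigma(y)=cx^q+dy$. Since $kx^q\subseteq V$ is a one-dimensional $G$-subrepresentation and $G$ is finite with $\fchar k=0$, Maschke yields a $G$-invariant complement $ky'\subseteq V$ with $y'=y+\mu x^q$ for some $\mu\in k$. Changing generators from $y$ to $y'$ extends to a graded algebra automorphism of $S$ (the commutativity relation $xy-yx=0$ is trivially preserved by the substitution), and in the new presentation every $\sigma\in G$ acts diagonally as $\operatorname{diag}(a_\sigma,a_\sigma^{-1})$ by $\hdet\sigma=1$. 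The map $\sigma\mapsto a_\sigma$ embeds $G$ into $k^\times$ (if $a_\sigma=1$ then $\sigma$ fixes both generators, hence is the identity), so $G$ is cyclic of the stated form. The principal obstacle in the proof is this simultaneous diagonalization in row 5, which is handled cleanly by Maschke; the analogous row 8 situation is automatic from the arithmetic constraint $a^{q+1}=1$ combined with $\fchar k=0$.
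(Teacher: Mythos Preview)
Your argument follows the same case-by-case strategy as the paper, working through the rows of Lemma~\ref{lem.Ue} that remain after excluding $xy\pm yx$. The paper's proof is much terser: it simply asserts that, since $\fchar k=0$ and $\sigma$ has finite order, one sees from the table that $\sigma=\operatorname{diag}(\lambda,\mu)$ with $\lambda\mu=1$, and then concludes cyclicity from the embedding into $k^\times$.

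Where your proof adds real content is in justifying that diagonality claim. For rows 4, 6, 7 it is immediate from the table, and for row 8 your induction $\sigma^n(y)=nca^{q(n-1)}x^q+a^{nq}y$ (valid because $a^{q+1}=1$ in $\HSL(S)$) is precisely the computation the paper suppresses. Row 5 is the one place your argument is genuinely more careful than the paper's: there exist finite-order $\sigma\in\HSL(S)$ with $c\neq 0$ (whenever $a^{q+1}\neq 1$ the upper-triangular action on $kx^q\oplus ky$ has distinct root-of-unity eigenvalues, hence finite order, regardless of $c$), so the paper's literal claim that each $\sigma$ is diagonal is not correct in the given generators. Your use of Maschke on $V=kx^q\oplus ky$ to produce a $G$-invariant complement $ky'$, followed by the harmless change of generator $y\mapsto y'$, is exactly what is needed to simultaneously diagonalize $G$ and put it in the stated form. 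This fills a small gap (or an implicit step) in the paper's argument.
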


\begin{proof}
For $\s\in G$, since $\fchar
k=0$ and $\s$ is of finite order, it is easy to see that $\s=\left(\begin{smallmatrix} \lambda & 0 \\
0 & \mu \end{smallmatrix}\right)$ where $\lambda, \mu\in k^\times$ are roots of unity by
Lemma \ref{lem.Ue}. Since $\hdet \s=\lambda\mu=1$ by
Lemma \ref{lem.Ue}, 
we may write
$\s =\left(\begin{smallmatrix} \lambda & 0 \\
0 & \lambda ^{-1}\end{smallmatrix}\right)$.
If 
$r:=\max
\{|\lambda|\mid \left(\begin{smallmatrix} \lambda & 0 \\
0 & \lambda ^{-1}\end{smallmatrix}\right)\in G\}\in \NN^+$, then it is easy to see that $G=\left\<\left(\begin{smallmatrix} \xi & 0 \\
0 & \xi^{-1}\end{smallmatrix}\right)\right\>$ where $\xi$ is a primitive $r$-th root
of unity.
\end{proof}

By the above lemma, we focus on studying $S^G$ when $G=\left\<\left(\begin{smallmatrix} \xi & 0 \\
0 & \xi^{-1}\end{smallmatrix}\right)\right\>$.

\begin{lemma} \label{lem.qam}
Let $S=k\<x, y\>/(xy-yx-x^{q+1})$ where $\deg x=1, \deg y=q\in \NN^+$.  For any $1\leq r\leq q+1$ such that $\gcd(q, r)=1$, $(\cS, (r))$ is ample for $\tails S$. 
\end{lemma}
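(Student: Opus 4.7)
The plan is to reduce the ampleness of $(\cS, (r))$ to a finite-dimensionality computation, via the machinery of the paper. Let $\xi \in k$ be a primitive $r$-th root of unity, and define $\sigma \in \GrAut S$ by $\sigma(x) = \xi x$, $\sigma(y) = \xi^q y$. Since the defining relation $xy - yx - x^{q+1}$ is homogeneous of weighted degree $q+1$, one checks it is scaled by $\xi^{q+1}$ under $\sigma$, so $\sigma$ is a well-defined graded automorphism of $S$. Let $G = \<\sigma\>$; because $\gcd(q,r) = 1$, $\sigma$ has order $r$, and $S_n$ is the $\xi^n$-eigenspace of $\sigma$, giving $S^G = \bigoplus_{n \equiv 0\,(r)} S_n = S^{(r)}$.

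By Theorem \ref{thm.1}, $(\cS,(r))$ is ample for $\tails S$ if and only if $(-)e \colon \tails S^{[r]} \to \tails eS^{[r]}e$ is an equivalence, where $eS^{[r]}e \cong S^{(r)}$. By the discussion preceding the definition of group ampleness, the ungraded isomorphism $S^{[r]} \cong S * G$ sends the matrix idempotent $e$ to $\tfrac{1}{r} \sum_{g \in G} 1 * g$, so $S^{[r]}/(e)$ and $(S*G)/(e)$ agree as vector spaces. I would then verify the hypotheses of Lemma \ref{lem.ample} for the pair $(S^{[r]}, e)$: right noetherianity of $S^{[r]}$ and $S^{(r)}$ and finite generation of $eS^{[r]}$ over $S^{(r)}$ are routine (using that $S$ is a noetherian AS-regular algebra of dimension 2). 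The essential point is to show $S^{[r]}/(e)$ is finite-dimensional, equivalently $(S*G)/(e)$ is.

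Using the primitive idempotents $e_j = \tfrac{1}{r} \sum_k \xi^{-jk}(1*\sigma^k)$ of $kG$ and the eigenspaces $S^{[m]} := \bigoplus_{n \equiv m\,(r)} S_n$, a direct computation gives $e_i(S*G)e_j = S^{[i-j]} \cdot e_j$ and $e_i(e)e_j = S^{[i]} S^{[-j]} \cdot e_j$ inside $S*G$. The problem therefore reduces to showing $S^{[a]} \cdot S^{[b]}$ has finite codimension in $S^{[a+b]}$ for every $a,b \in \ZZ/r\ZZ$. Here I would use the PBW basis $\{x^i y^j\}$ of $S$: the relation $yx = xy - x^{q+1}$ implies, by iteration, $(x^{i_1}y^{j_1})(x^{i_2}y^{j_2}) = x^{i_1+i_2} y^{j_1+j_2} + (\textnormal{terms of $y$-degree $< j_1+j_2$})$. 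Given $x^i y^j \in S_n$ with $n = i+qj \equiv a+b \pmod r$, I want $(i_1, j_1, i_2, j_2)$ with $i_1+i_2 = i$, $j_1+j_2 = j$ and $i_1 + qj_1 \equiv a$, $i_2 + qj_2 \equiv b \pmod r$. Setting $j_1 \in \{0,\dots,j\}$ forces $i_1 \equiv a - qj_1 \pmod r$; since $\gcd(q,r)=1$, the residues $a - qj_1 \pmod r$ cover all of $\ZZ/r\ZZ$ as $j_1$ ranges over any $r$ consecutive integers, so for $n$ sufficiently large a valid choice of $j_1$ with $i_1 \in \{0,\dots,i\}$ always exists. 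Induction on the $y$-degree absorbs the correction terms, giving $S_n^{[a+b]} \subseteq S^{[a]} S^{[b]}$ for $n \gg 0$, hence finite codimension.

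The main obstacle is this last combinatorial decomposition: one has to balance the constraint that $i_1$ be non-negative and $\leq i$ with the residue condition $i_1 \equiv a - qj_1 \pmod r$, and also track degrees through the $y$-degree induction. The hypothesis $\gcd(q,r) = 1$ is used essentially to guarantee that $q$ generates $\ZZ/r\ZZ$, so the residues $a - qj_1$ eventually cover $0$; this is the only mechanism for handling monomials $x^i y^j$ with small $i$ but large $j$.
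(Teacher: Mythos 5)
Your argument is correct, but it takes a genuinely different route from the paper's. The paper argues via generation: for each $1\leq j\leq q+1$ it chooses $i$ with $1\leq ir-j\leq r\leq q+1$ and $m,n$ with $nr-mq=1$, and exhibits an epimorphism in $\tails S$ from $\cS(-ir)\oplus \cS(-nrj)$ onto $\cS(-j)$ given by $(x^{ir-j}, y^{mj})\cdot$, whose cokernel $S/(x^{ir-j}S+y^{mj}S)\cong k[x,y]/(x^{ir-j},y^{mj})$ is finite dimensional because $x^m$ is normal and $S/(x^m)$ is commutative for $m\leq q+1$; ampleness then follows from the generation criterion of [Mmc, Theorem 3.5 and Lemma 2.7]. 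This is where the hypothesis $r\leq q+1$ is used, and where $\gcd(q,r)=1$ enters (via $nr-mq=1$). You instead pass through Theorem \ref{thm.1} and Lemma \ref{lem.ample}, reducing everything to the finite dimensionality of $S^{[r]}/(e)\cong (S*G)/(e)$, which you verify by the PBW factorization $x^iy^j=(x^{i_1}y^{j_1})(x^{i_2}y^{j_2})+(\text{lower $y$-degree terms of the same total degree})$ and the fact that the residues $a-qj_1$ sweep out $\ZZ/r\ZZ$; your identification $e_i(S*G)e_j=S^{[i-j]}e_j$ and $e_i(e)e_j=S^{[i]}S^{[-j]}e_j$ is correct, and the reduction to finite codimension of $S^{[a]}S^{[b]}$ in $S^{[a+b]}$ is sound. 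What your route buys: it never uses $r\leq q+1$, so it proves a slightly stronger statement, and it directly verifies condition (3) of Theorem \ref{thm.ampeq} for the associated cyclic group. What the paper's route buys: it is shorter, outsourcing the hard equivalence to the cited generation criterion, and avoids the skew group algebra entirely. One step you should spell out: the existence of an admissible $(i_1,j_1)$ with $0\leq i_1\leq i$, $0\leq j_1\leq j$, $i_1+qj_1\equiv a \pmod r$ needs the case split ``if $j\geq r-1$, choose $j_1$ with $qj_1\equiv a$ and take $i_1=0$; if $j<r-1$, then $i=n-qj\geq n-q(r-1)$ is large for $n\gg 0$, so take $j_1=0$ and $i_1\equiv a$ with $0\leq i_1<r$'' --- as written, ``a valid choice always exists for $n$ sufficiently large'' glosses over the monomials with small $x$-exponent.
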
 

\begin{proof} For any $1\leq m\leq q+1$, since $x^m\in S$ is a normal element,  $S/x^mS\cong S/(x^m)\cong k[x, y]/(x^m)$, so,  for any $n\geq 1$,   
$$\dim _kS/(x^mS+y^nS)=\dim _kk[x, y]/(x^m, y^n)<\infty.$$  

For any $1\leq j\leq q+1$, there exists $i\in \NN^+$ such that $1\leq ir-j\leq r\leq q+1$.  Since $\gcd(q, r)=1$, there exist $m, n\in \NN^+$ such that $nr-mq=1$, so that $\deg (y^{mj})=mqj=nrj-j$.  Define a homomorphism 
$$\begin{CD}\phi_j:S(-ir)\oplus S(-nrj) @>{(x^{ir-j}, y^{mj})\cdot }>> S(-j).\end{CD}$$
Since $1\leq ir-j\leq q+1$ and $mj\geq 1$, we have that $\operatorname {Coker} \phi _j=(S/x^{ir-j}S+y^{mj}S)(-j)$ is finite dimensional by the above argument.  Since Gorenstein parameter of $S$ is $q+1$, it follows that $\{\cS(-j)\}_{1\leq j\leq q+1}$ generates $\tails S$, 
so $(\cS, (r))$ is ample for $\tails S$ by \cite[Theorem 3.5]{Mmc} (cf. \cite[Lemma 2.7]{Mmc}). 
\end{proof} 

We expect that, for a noetherian AS-regular algebra $S$ of dimension 2, any finite subgroup $G\leq \HSL(S)$ is ample for $S$.  The following theorem shows that this is the case unless $S\cong k\<x, y\>/(xy\pm yx)$.

\begin{theorem} \label{thm.4.5} 
Let $S=k\<x, y\>/(f)$ be a noetherian AS-regular algebra of dimension 2 such
that $\gcd(\deg x, \deg y)=1$, and $G\leq \HSL(S)$ a finite subgroup.  If $G=\left\<\left(\begin{smallmatrix} \xi & 0 \\
0 & \xi^{-1}\end{smallmatrix}\right)\right\>$ where $\xi\in k^{\times}$ is a primitive $r$-th root of unity with $r=|G|$, then 
\begin{enumerate}
\item{} the map $\Phi :S*G\to \uEnd_{S^G}(S)$ defined by $\Phi(s*g)=[t\mapsto sg(t)]$ is an isomorphism of graded algebras, and 
\item{} $G$ is ample for $S$.
\end{enumerate} 
\end{theorem}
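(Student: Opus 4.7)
My plan is to prove $(1)$ and $(2)$ simultaneously by verifying condition $(3)$ of Theorem \ref{thm.ampeq}, namely that $S*G/(e)$ is finite-dimensional over $k$; the conclusions then follow from $(3)\Rightarrow(1)$ and $(3)\Rightarrow(2)$ proved there.

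I would first decompose $kG$ into primitive idempotents $kG = \bigoplus_{j=0}^{r-1} k\, e_j$ with $e_j := \frac{1}{r}\sum_{i=0}^{r-1}\xi^{-ij}(1*g^i)$ (so $e_0 = e$), and decompose the $G$-module $S$ by weight, $S = \bigoplus_{k \in \ZZ/r} S^k$ with $S^k = \{s : g(s) = \xi^k s\}$ and $S^0 = S^G$. Using the multiplication rules $(s*1)\,e_j = s\,e_j$ and $e_j\,(s*1) = s\,e_{j-k}$ for $s \in S^k$, and checking closure under the $S*G$-action, I expect the identity
\[
(e) \;=\; \bigoplus_{k \in \ZZ/r}\,(S\cdot S^k)\,e_{-k}, \qquad\text{so}\qquad S*G/(e) \;\cong\; \bigoplus_{k=0}^{r-1} S/(S\cdot S^k).
\]

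Next I would verify each $S/(S\cdot S^k)$ is finite-dimensional. The case $k=0$ is trivial since $1 \in S^G$. For $1 \le k \le r-1$, both $x^k$ and $y^{r-k}$ lie in $S^k$ (using $g(x^k)=\xi^k x^k$ and $g(y^{r-k})=\xi^{-(r-k)}y^{r-k}=\xi^k y^{r-k}$), so $S\cdot S^k \supseteq S\,x^k + S\,y^{r-k}$. By \cite[Proposition 3.3]{SZ} every $S$ in Lemma \ref{lem.Ue} is a graded Ore extension $k[x][y;\sigma,\delta]$ with PBW basis $\{x^a y^b\}$; a short induction on the $y$-degree using $y\,s = \sigma(s)\,y + \delta(s)$ should give $S\,x^k = \mathrm{span}_k\{x^a y^b : a \ge k\}$ and analogously $S\,y^{r-k} = \mathrm{span}_k\{x^a y^b : b \ge r-k\}$, so $S/(Sx^k + Sy^{r-k})$ has basis $\{x^a y^b : 0 \le a < k,\ 0 \le b < r-k\}$ and is $k(r-k)$-dimensional. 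Summing over $k$ then gives $\dim_k S*G/(e) < \infty$ and Theorem \ref{thm.ampeq} completes the argument.

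The step I expect to require the most care is the Jordan case $S = k\<x,y\>/(xy - yx - x^{q+1})$ (case 8 of Lemma \ref{lem.Ue}), where the derivation $\delta(x) = -x^{q+1}$ is nonzero and the normal-form expansion of $y^b x^k$ picks up genuine correction terms. The crucial observation should be that $\delta$ \emph{strictly} raises the $x$-degree, so those corrections only \emph{increase} the $x$-exponent and the invariant $a \ge k$ is preserved throughout the induction; once this is in hand, the triangular leading-term argument recovers the identification $S\,x^k = \mathrm{span}_k\{x^a y^b : a \ge k\}$ uniformly across all cases of Lemma \ref{lem.Ue}.
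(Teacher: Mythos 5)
Your proposal is correct, and it takes a genuinely different route from the paper. The paper verifies condition (2) of Theorem \ref{thm.ampeq}: it splits into the cases $f=xy-\a yx$ and $f=xy-yx-x^{q+1}$, shows in each that $(\cS,(r))$ is ample for $\tails S$ (by regrading with $\deg x=1$, $\deg y=r-1$ in the first case, and by Lemma \ref{lem.qam} in the second), then invokes \cite[Theorem 5.11]{U} for the isomorphism $\Phi$ and \cite[Corollary 5.3]{U} for $S^G$ being a graded isolated singularity, and finally applies (2) $\Rightarrow$ (1). You instead verify condition (3) directly. Your key identity $(e)=\bigoplus_{k}(S\cdot S^k)e_{-k}$ does hold: one checks closure of the right-hand side under left and right multiplication using $e_j(s*1)=(s*1)e_{j-k}$ for $s\in S^k$ and $S^kS^{k'}\subseteq S^{k+k'}$, and containment in $(e)$ from $(s*1)e(t*1)=(st*1)e_{-k}$. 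The finite-dimensionality of each $S/(S\cdot S^k)$ then follows as you say from $x^k,y^{r-k}\in S^k$ together with the PBW basis $\{x^ay^b\}$; note that the containment you actually need is $\operatorname{span}\{x^ay^b: a\geq k\}\subseteq Sx^k$, which comes from normality of $x$ (giving $x^kS\subseteq Sx^k$) rather than from expanding $y^bx^k$, though your unitriangularity remark does recover the equality, and your worry about the Jordan case is correctly resolved since $\d$ strictly raises the $x$-degree. Your approach is more elementary and uniform: it avoids the case split, the regrading trick, Lemma \ref{lem.qam}, and the external inputs \cite[Theorem 5.11]{U}, \cite[Corollary 5.3]{U}, and it yields the explicit value $\dim_k S*G/(e)=\sum_{k=1}^{r-1}k(r-k)$. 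What the paper's route buys is the intermediate fact that $(\cS,(r))$ is ample for $\tails S$, which ties the theorem to the Veronese and quasi-Veronese picture of Section 2 and is of independent interest.
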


\begin{proof} (1) Since $\Phi$ preserves grading by definition, it is enough to show that $\Phi$ is an isomorphism of ungraded algebras. 

Suppose that $S=k\<x, y\>/(xy-\a yx), \a \neq 0$.  Since $G=\left\<\left(\begin{smallmatrix} \xi & 0 \\ 0 & \xi^{r-1}\end{smallmatrix}\right)\right\>$,
by regrading $S$ as $\deg x=1, \deg y=r-1$, we see that $(\cS, (r))$ becomes ample for $\tails S$ by \cite [Corollary 3.6]{Mmc}, so  
the map $\Phi :S*G\to \uEnd_{S^G}(S)$ is an isomorphism of algebras by \cite [Theorem 5.11]{U}.

Suppose that $S=k\<x, y\>/(xy-yx-x^{q+1})$ with $\deg x=1, \deg
y=q\geq 1$.  By Lemma \ref{lem.Ue}, 
$G=\left\<\left(\begin{smallmatrix} \xi  & 0 \\ 0 & \xi ^q \end{smallmatrix}\right)\right\>$ 
such that $r\mid q+1$.  Since $\gcd(q, r)=1$, $(\cS, (r))$ is ample for $\tails S$ by Lemma \ref{lem.qam}, so the map $\Phi :S*G\to \uEnd_{S^G}(S)$ is an isomorphism of graded algebras by \cite [Theorem 5.11]{U}.

(2) Since $S^G$ is a noetherian AS-Gorenstein graded isolated singularity by \cite [Corollary 5.3]{U}, and $\Phi$ is an isomorphism of graded algebras by (1), $G$ is ample for $S$ by 
Theorem \ref{thm.ampeq}.
\end{proof}

\begin{remark} If $S\not \cong k\<x, y\>/(xy\pm yx)$,
then any finite subgroup $G\leq \HSL(S)$ is of the form in the above theorem by Lemma \ref{lem.hsl}, so it is always ample.  If $S\cong k[x, y]$ with $\deg x=\deg y=1$ and $G\leq \HSL(S)=\SL(2, k)$ is a finite subgroup, then $G$ is small, 
so the map $\Phi :S*G\to \uEnd_{S^G}(S)$ is an isomorphism of graded algebras by \cite [Theorem 3.2]{IT}, and $G$ is ample by Corollary \ref{cor.ampeq}. 
For $S\cong k\<x, y\>/(xy+yx)$ with $\deg x=\deg y=1$, finite subgroups of $\HSL(S)$ were classified in \cite {CKWZ}, but we do not know if they are all ample. 
\end{remark} 

\subsection{Quiver Representations}

If $S$ is a noetherian AS-regular algebra of dimension 2, then $\nabla S\cong kQ_S$ for some
quiver $Q_S$ by \cite [Theorem 5.4]{Mmc}, so if $G\leq
\HSL(S)$ is a finite subgroup, then $(\nabla S)*G\cong kQ_S*G$ is Morita equivalent to a path algebra $kQ_{S,G}$
for some quiver $Q_{S, G}$. If $G=\left\<\left(\begin{smallmatrix} \xi & 0 \\ 0 & \xi^{-1}\end{smallmatrix}\right)\right\>$, 
then $G$ is ample, so $\cD^b(\tails S^G)\cong \cD^b(\mod kQ_{S, G})$ by Theorem \ref{thm.4.5}. The
purpose of this subsection is to compute $Q_{S, G}$ for each pair
$(S, G)$.
(We will see that, in this case, $kQ_S*G$ is isomorphic to $kQ_{S,G}$.)

We fix notations used throughout this subsection. 
Let $S=k\<x, y\>/(f)$ be a noetherian AS-regular algebra of dimension 2 and of
Gorenstein parameter $\ell=\deg x+\deg y$.
Then the quiver $Q_S$ is given as follows.
The set of vertices is $\{0,1,\dots, \ell-1 \}$.
For any vertex $i$ such that $i + \deg x < \ell$, we draw an arrow $\xymatrix@C=1pc@R=1pc{i \ar[r]^(0.3)x &i+\deg x }$.
For any vertex $i$ such that $i + \deg y < \ell$, we draw an arrow $\xymatrix@C=1pc@R=1pc{i \ar[r]^(0.3)y &i+\deg y }$.
In addition, we assume that $\gcd( \deg x, \deg y) = 1$.
Then the underlying graph of the quiver $Q_S$ is the extended Dynkin 
diagram of type $\widetilde{A}_{\ell-1}$.
Let $G\leq \HSL(S)$ be a finite subgroup. Assume that $G=\left<g \right>$ where 
$$g=\begin{pmatrix} \xi & 0 \\
0 & \xi^{-1}\end{pmatrix}$$
and $\xi$ is a primitive $r$-th root
of unity with $r=|G|$.  If $f \neq xy\pm yx$,
then $G$ is always of this form by Lemma \ref{lem.hsl}.
Put $\L = (\nabla S)*G\cong kQ_S*G$.

%
%
%
%
%
%
%

\begin{lemma} \label{lem.La}
The following hold.
\begin{enumerate}
\item Let $\{e_0, \dots, e_{\ell-1}\}$ be a complete set of primitive orthogonal idempotents of $kQ_S$,
and let $\rho_i = \frac{1}{r}\sum _{p=0}^{r-1} \xi^{ip} g^p \in kG$ for $i=0,\dots,r-1$.
Then
$$\{ e^{j}_{i} := e_i * \rho_j \mid 0\leq i\leq \ell-1, 0\leq j\leq r-1 \}$$
is a complete set of primitive orthogonal idempotents of $\L$.

\item $\L$ is basic.
\end{enumerate}
\end{lemma}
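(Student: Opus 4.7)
The plan is to leverage the fact that the action of $G$ on $\nabla S \cong kQ_S$ fixes the vertex idempotents.  Under the description of $\nabla S$ as the algebra of upper triangular matrices over $S$, the $e_i$ correspond to the diagonal matrix units $E_{ii}$, and since $g = \operatorname{diag}(\xi,\xi^{-1})$ acts entrywise on $\nabla S$, it scales arrows (off-diagonal entries) but fixes the diagonal, so $g(e_i)=e_i$ for all $i$.  Consequently, in $\Lambda = kQ_S * G$, the element $e_i * 1$ commutes with every $1 * h$ for $h \in G$, and therefore $e^j_i = (e_i * 1)(1 * \rho_j) = (1 * \rho_j)(e_i * 1)$.

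Next, I would verify by elementary character theory for the cyclic group $G = \langle g\rangle$ of order $r$ that $\{\rho_j\}_{0\le j\le r-1}$ is a complete set of primitive orthogonal idempotents of $kG \cong k^r$.  Combining this with $e_i e_{i'} = \delta_{ii'} e_i$ and the commutativity observed above yields at once
$$e^j_i \cdot e^{j'}_{i'} \;=\; \delta_{ii'}\delta_{jj'}\, e^j_i \qquad\text{and}\qquad \sum_{i,j} e^j_i \;=\; \Bigl(\sum_i e_i\Bigr) * \Bigl(\sum_j \rho_j\Bigr) \;=\; 1.$$
For primitivity, I would use that all arrows of $Q_S$ run from smaller to larger vertices, so there are no nontrivial paths $i\to i$, whence $e_i (kQ_S) e_i = k e_i$.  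Together with the $G$-invariance of $e_i$, a direct calculation gives $(e_i * 1)\,\Lambda\,(e_i * 1) \cong kG$ as rings, and under this identification $e^j_i$ corresponds to $\rho_j$.  Since $\rho_j$ is primitive in $kG$, so is $e^j_i$ in $\Lambda$, completing (1).

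For (2), I plan to invoke the standard fact that when $|G|$ is invertible in $k$ one has $\operatorname{rad}(\Lambda) = \operatorname{rad}(kQ_S) * G$.  Then
$$\Lambda/\operatorname{rad}(\Lambda) \;\cong\; (kQ_S/\operatorname{rad}(kQ_S)) * G \;\cong\; k^{\ell} * G,$$
and because $G$ acts trivially on the $e_i$, the induced action on $k^\ell$ is trivial, so $k^\ell * G \cong k^\ell \otimes_k kG \cong (kG)^\ell \cong k^{r\ell}$, a product of copies of $k$.  Hence $\Lambda$ is basic.

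The main subtle point is the first one, namely confirming $g(e_i) = e_i$: while elementary once set up correctly, it requires carefully tracing the $G$-action from $S$ through the matrix presentation of $\nabla S$ to the path algebra $kQ_S$.  After this, everything reduces to direct computations with orthogonal idempotents and the standard radical formula for skew group rings, so the rest of the argument is routine.
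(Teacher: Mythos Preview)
Your proposal is correct and follows essentially the same approach as the paper: for (2) both arguments compute $\Lambda/\rad\Lambda$ via the identity $\rad(kQ_S*G)=\rad(kQ_S)*G$ (the paper cites \cite[Section 1.2]{RR}), use that $G$ fixes each $e_i$ to split this as $\prod_i ke_i*G$, and then decompose each factor as $k^r$. For (1) the paper simply writes ``straightforward calculation,'' so your explicit verification (that $g(e_i)=e_i$, that $e_i(kQ_S)e_i=ke_i$, and hence that $e_i^j$ corresponds to the primitive idempotent $\rho_j\in kG$) is exactly the computation being alluded to.
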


\begin{proof}
%
%
%
(1) This follows from a straightforward calculation.

(2) It is enough to show $\L /\rad \L = k \times \cdots \times k$ as algebras.
\begin{align*}
\L /\rad \L & \cong (kQ_S/\rad kQ_S)*G\\
& \cong (ke_0 \times \cdots \times ke_{\ell-1})*G\\
& \cong ke_0*G \times \cdots \times ke_{\ell-1}*G
\end{align*}
where the first isomorphism is by \cite[Section 1.2]{RR} and the last isomorphism is by
the fact that $G$ acts on $e_i$ trivially. Moreover, we have an isomorphism
\begin{align*}
ke_{i}*G &\cong e_i^0(ke_{i}*G)e_i^0 \times \cdots \times e_i^{r-1}(ke_{i}*G)e_i^{r-1};\\
e_i * g^s &\mapsto (e_i^0(e_i*g^s)e_i^0, \cdots, e_i^{r-1}(e_i*g^s)e_i^{r-1} ).
\end{align*}
Since $e_i^j(ke_{i}*G)e_i^j$ is $1$-dimensional over $k$, we obtain the desired assertion.
\end{proof}

By the above lemma and \cite[Theorem 1.3(c)]{RR}, $\L = kQ_S*G$ is basic and hereditary,
so there exists a quiver $Q_{S,G}$ such that $\L \cong k Q_{S,G}$.
We now compute the quiver $Q_{S,G}$.

\begin{theorem} \label{lem.QSG}
The quiver $Q_{S,G}$ is given as follows.
The set of vertices is $\{(i,j) \mid  0\leq i\leq \ell-1, 0\leq j\leq r-1\}$.
For any arrow $\xymatrix@C=1pc@R=1pc{ i \ar[r]^(0.2){x} &i'\;(=i+\deg x)}$ in $Q_S$ and any $j \in \ZZ/r\ZZ$, we draw an arrow $\xymatrix@C=1pc@R=1pc{ (i, j-1) \ar[r] &(i',j) }$.
For any arrow $\xymatrix@C=1pc@R=1pc{ i \ar[r]^(0.2){y} &i'\;(=i+\deg y)}$ in $Q_S$ and any $j \in \ZZ/r\ZZ$, we draw an arrow $\xymatrix@C=1pc@R=1pc{ (i, j+1) \ar[r] &(i',j) }$.
\end{theorem}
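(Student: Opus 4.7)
The plan is to use that $\Lambda = kQ_S * G$ is basic and hereditary (by Lemma \ref{lem.La} together with the cited hereditary-preservation result) so that $\Lambda \cong kQ_{S,G}$ for a unique quiver $Q_{S,G}$, and to reduce the theorem to two tasks: identify the vertices of $Q_{S,G}$ with the primitive orthogonal idempotents of $\Lambda$, and count the arrows between two such vertices as the dimension of a standard $\rad/\rad^2$ piece. The vertex identification is immediate from Lemma \ref{lem.La}(1): the $\ell r$ idempotents $e_i^j = e_i * \rho_j$ are in bijection with the declared vertex set $\{(i,j) : 0 \le i < \ell,\; 0 \le j < r\}$.

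For arrows, the number of arrows from $(i,j)$ to $(i',j')$ in $Q_{S,G}$ equals $\dim_k e_{i'}^{j'}\bigl(\rad \Lambda/\rad^2 \Lambda\bigr) e_i^j$. To compute this, I would first use that $|G|$ is invertible in $k$: the ideal $(\rad kQ_S)*G$ is nilpotent in $\Lambda$ and the quotient $\Lambda/(\rad kQ_S)*G \cong (kQ_S/\rad kQ_S)*G$ is semisimple, so $\rad \Lambda = (\rad kQ_S)*G$ and hence $\rad^n \Lambda = (\rad^n kQ_S)*G$ for all $n$. Therefore
$$\rad \Lambda / \rad^2 \Lambda \;\cong\; (\rad kQ_S/\rad^2 kQ_S)*G,$$
which has $k$-basis $\{\alpha * g^p : \alpha \text{ an arrow of } Q_S,\; 0 \le p < r\}$.

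The core computation is then to expand $e_{i'}^{j'}(\alpha * 1) e_i^j = (e_{i'}*\rho_{j'})(\alpha*1)(e_i*\rho_j)$ for each arrow $\alpha \colon i \to i'$ of $Q_S$. Using that (a) $g$ fixes every vertex idempotent $e_i$, (b) $g$ acts on an $x$-arrow by the scalar $\xi^{\pm 1}$ and on a $y$-arrow by $\xi^{\mp 1}$ as dictated by the prescribed $G$-action on the generators of $S$, and (c) the $\rho_s$ are pairwise orthogonal idempotents of $kG$ with $g\rho_s = \xi^{-s}\rho_s$, a short manipulation collapses the sum via character orthogonality and shows that the product equals $\alpha*\rho_j$ precisely when $j'-j$ takes the specific value in $\ZZ/r\ZZ$ prescribed in the statement (namely $+1$ for $x$-arrows and $-1$ for $y$-arrows, up to convention), and is zero otherwise. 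This yields exactly one arrow in $Q_{S,G}$ for each triple $(\alpha, i, j)$, reproducing the shape asserted in the theorem.

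The main obstacle is not the mathematics but the bookkeeping: one must fix once and for all the conventions for the $G$-action on the generators of $S$ (and hence on the arrows of $Q_S$), for the idempotents $\rho_j$ (i.e.\ which character of $G$ each implements), and for the source/target orientation of arrows in the path algebra, and then check that the $\pm 1$ shifts in the second index coming out of the character-orthogonality computation match those in the statement verbatim. The computation itself is only a few lines once these conventions are aligned.
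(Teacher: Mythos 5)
Your proposal is correct and is essentially the paper's own proof: the paper simply cites the arguments of Reiten--Riedtmann [RR, Section 2.3], which are exactly what you carry out --- vertices from the idempotents $e_i^j=e_i*\rho_j$ of Lemma \ref{lem.La}, the identification $\rad\Lambda=(\rad kQ_S)*G$, and the character-orthogonality computation of $e_{i'}^{j'}(\alpha*1)e_i^j$ determining the index shift $\pm 1$ for $x$- versus $y$-arrows. The only remaining work is the sign/convention bookkeeping you already flag, which indeed matches the statement.
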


\begin{proof}
This result can be obtained by applying arguments in \cite[Section 2.3]{RR} to our setting.
\end{proof}

\begin{example}
We consider the case $\deg x=1,\deg y=1$ and $|G|=3$
(see \cite{IT}).
In this case, $\ell=2$, $r=3$.
The quiver $Q_S$ is given as follows.
\[
\xymatrix@C=2pc@R=1pc{
0 \ar@<0.5ex>[r]^{x} \ar@<-0.5ex>[r]_{y}
&1.
}
\]
Then the quiver $Q_{S,G}$ is given as follows.
\[\xymatrix@C=1.5pc@R=0.8pc{ (0,0)&(1,0) \\
              (0,1)&(1,1)\\
               (0,2)&(1,2). 
                   \ar "1,1";"2,2"
                   \ar "1,1";"3,2"
                   \ar "2,1";"1,2"
                   \ar "2,1";"3,2"
                   \ar "3,1";"1,2"
                   \ar "3,1";"2,2" }\]
\end{example}

\begin{example}
We consider the case $\deg x=1,\deg y=3$ and $|G|=2$.
In this case, $\ell=4$, $r=2$.
The quiver $Q_S$ is given as follows.
\[
\xymatrix@C=2pc@R=1pc{
0 \ar[r]^x
&1 \ar[r]^x
&2 \ar[r]^x
&3. \ar@{<-}@/^0.5pc/@<0.3ex>[lll]^y
}
\]
Then the quiver $Q_{S,G}$ is given as follows.
\begin{align*}
&\xymatrix@C=1.5pc@R=0.8pc{ (0,0)&(1,0)&(2,0)&(3,0)\\
              (0,1)&(1,1)&(2,1)&(3,1)
                   \ar "1,1";"2,2"
                   \ar "1,1";"2,4"
                   \ar "1,2";"2,3" 
                   \ar "1,3";"2,4"
                   \ar "2,1";"1,2"
                   \ar "2,1";"1,4"
                   \ar "2,2";"1,3"
                   \ar "2,3";"1,4" }\\
&= Q_S \sqcup Q_S.
\end{align*}
\end{example}

\begin{example}
We consider the case $\deg x=3,\deg y=5$ and $|G|=4$.
In this case, $\ell=8$, $r=4$.
The quiver $Q_S$ is given as follows.
\[
\xymatrix@C=2pc@R=1pc{
0 \ar@/^1pc/[rrr]^{x} \ar@/_1.5pc/[rrrrr]_{y}
&1 \ar@/^1pc/[rrr]^{x} \ar@/_1.5pc/[rrrrr]_{y}
&2 \ar@/^1pc/[rrr]^{x} \ar@/_1.5pc/[rrrrr]_{y}
&3 \ar@/^1pc/[rrr]^{x}
&4 \ar@/^1pc/[rrr]^{x}
&5 
&6 
&7. 
}
\]
By appropriate renumbering of vertices, it is equal to 
\[
\xymatrix@C=2pc@R=1pc{
0 \ar[r]^x
&1 \ar[r]^x
&2 \ar@{<-}[r]^y
&3 \ar[r]^x
&4 \ar[r]^x
&5 \ar@{<-}[r]^y
&6 \ar[r]^x
&7. \ar@{<-}@/^0.6pc/@<0.4ex>[lllllll]^y
}
\]
Then the quiver $Q_{S,G}$ is given as follows.
\begin{align*}
&\xymatrix@C=1.5pc@R=0.8pc{ (0,0)&(1,0)&(2,0)&(3,0)&(4,0)&(5,0)&(6,0)&(7,0)\\
             (0,1)&(1,1)&(2,1)&(3,1)&(4,1)&(5,1)&(6,1)&(7,1)\\ 
             (0,2)&(1,2)&(2,2)&(3,2)&(4,2)&(5,2)&(6,2)&(7,2)\\
             (0,3)&(1,3)&(2,3)&(3,3)&(4,3)&(5,3)&(6,3)&(7,3)
                   \ar "1,1";"2,2"
                   \ar "2,1";"3,2"
                   \ar "3,1";"4,2" 
                   \ar "4,1";"1,2"
                   \ar "1,2";"2,3"
                   \ar "2,2";"3,3"
                   \ar "3,2";"4,3"
                   \ar "4,2";"1,3" 
                   \ar "2,4";"1,3"
                   \ar "3,4";"2,3"
                   \ar "4,4";"3,3"
                   \ar "1,4";"4,3"
                   \ar "1,4";"2,5"
                   \ar "2,4";"3,5"
                   \ar "3,4";"4,5"
                   \ar "4,4";"1,5" 
                   \ar "1,5";"2,6"
                   \ar "2,5";"3,6"
                   \ar "3,5";"4,6"
                   \ar "4,5";"1,6"
                   \ar "2,7";"1,6"
                   \ar "3,7";"2,6"
                   \ar "4,7";"3,6"
                   \ar "4,2";"1,3" 
                   \ar "1,7";"4,6"
                   \ar "1,7";"2,8"
                   \ar "2,7";"3,8"
                   \ar "3,7";"4,8"
                   \ar "4,7";"1,8"
                   \ar "1,1";"4,8"
                   \ar "2,1";"1,8"
                   \ar "3,1";"2,8"
                   \ar "4,1";"3,8"}\\
&= Q_S \sqcup Q_S\sqcup Q_S\sqcup Q_S.
\end{align*}      
\end{example}

As illustrated in the above examples, 
the quiver $Q_S$ can be written as 
$$\xymatrix@C=2pc@R=1pc{
0 \ar[r]^x
&
&\cdots
& \ar[r]^x
&\ell-1 \ar@{<-}@/^0.5pc/@<0.3ex>[llll]^y
}.$$
For a quiver $Q_S$ and a positive integer $i$,
we define the $i$-covering quiver $Q_{S}^i$ as follows.
$$
 \left.
 \begin{array}{r}
  \xymatrix@C=2pc@R=1pc{
0 \ar[r]^x 
&
&\cdots
& \ar[r]^x
&\ell-1 \ar@{<-}[lllld]_(0.65)y
\\ 
0 \ar[r]^x
&
&\cdots
& \ar[r]^x
&\ell-1 \ar@{<-}[lllld]_(0.65)y
\\
&&\vdots &&\ar@{<-}[lllld]_(0.65)y
\\
0 \ar[r]^x
&
&\cdots
& \ar[r]^x
&\ell-1 \ar@{<-}[lllluuu]_(0.3)y
}
 \end{array}
 \right\} i\;\textnormal{times.}
$$
This quiver is uniquely determined by $Q_S$ and $i$.

By observation, we see that the quiver of $Q_{S,G}$ is given by
$$ \underbrace{ Q_S ^{\frac{m}{\ell}} \sqcup \cdots \sqcup Q_S^{\frac{m}{\ell}} }_{n\;\textnormal{times}}$$
where $m =\lcm(\ell,r), n=\gcd(\ell,r)$.

\begin{example} \label{ex.quiver}
We consider the case $\deg x=1,\deg y=3$ and $|G|=6$.
In this case, $\ell=4$, $r=6$.
The quiver $Q_S$ is given as follows.
\[
\xymatrix@C=2pc@R=1pc{
0 \ar[r]^x
&1 \ar[r]^x
&2 \ar[r]^x
&3. \ar@{<-}@/^0.5pc/@<0.3ex>[lll]^y
}
\]
Note that $m =\lcm(\ell,r)=12, n=\gcd(\ell,r)=2$.
Then the quiver $Q_{S,G}$ is given as follows.

\begin{align*}
&\xymatrix@C=1.5pc@R=0.8pc{ (0,0)&(1,0)&(2,0)&(3,0)\\
             (0,1)&(1,1)&(2,1)&(3,1)\\ 
             (0,2)&(1,2)&(2,2)&(3,2)\\
             (0,3)&(1,3)&(2,3)&(3,3)\\
             (0,4)&(1,4)&(2,4)&(3,4)\\
             (0,5)&(1,5)&(2,5)&(3,5)
                   \ar "1,1";"2,2"
                   \ar "2,1";"3,2"
                   \ar "3,1";"4,2" 
                   \ar "4,1";"5,2"
                   \ar "5,1";"6,2"
                   \ar "1,2";"2,3"
                   \ar "2,2";"3,3"
                   \ar "3,2";"4,3" 
                   \ar "4,2";"5,3"
                   \ar "5,2";"6,3"
                   \ar "1,3";"2,4"
                   \ar "2,3";"3,4"
                   \ar "3,3";"4,4"
                   \ar "4,3";"5,4"
                   \ar "5,3";"6,4"
                   \ar "2,1";"1,4" 
                   \ar "3,1";"2,4"
                   \ar "4,1";"3,4"
                   \ar "5,1";"4,4"
                   \ar "6,1";"5,4"
                   \ar "6,1";"1,2"
                   \ar "6,2";"1,3"
                   \ar "6,3";"1,4"
                   \ar "1,1";"6,4"}
\\
= &\xymatrix@C=1pc@R=1pc{
\bullet \ar[r]
&\bullet \ar[r]
&\bullet \ar[r]
&\bullet \ar@{<-}[r]
&\bullet \ar[r]
&\bullet \ar[r]
&\bullet \ar[r]
&\bullet \ar@{<-}[r]
&\bullet \ar[r]
&\bullet \ar[r]
&\bullet \ar[r]
&\bullet \ar@{<-}@/^0.5pc/@<0.5ex>[lllllllllll]
}\\
&\qquad \sqcup \\
&\xymatrix@C=1pc@R=1pc{
\bullet \ar[r]
&\bullet \ar[r]
&\bullet \ar[r]
&\bullet \ar@{<-}[r]
&\bullet \ar[r]
&\bullet \ar[r]
&\bullet \ar[r]
&\bullet \ar@{<-}[r]
&\bullet \ar[r]
&\bullet \ar[r]
&\bullet \ar[r]
&\bullet \ar@{<-}@/^0.5pc/@<0.5ex>[lllllllllll]
}
\\
= & Q_S^3 \sqcup Q_S^3.
\end{align*}
\end{example}

For positive integers $i, j$, we define the quiver $Q_{(i, j)}$ by
$$
\xymatrix@C=2pc@R=0.3pc{
&\bullet \ar[r]^{\a_2} &\bullet \ar[r]^{\a_3} &\cdots \ar[r]^{\a_{i-1}} &\bullet \ar[rd]^{\a_{i}}
\\
\bullet \ar[ru]^{\a_1} \ar[rd]_{\b_1} & & & & & \bullet.
\\ 
&\bullet \ar[r]_{\b_2} &\bullet \ar[r]_{\b_3} &\cdots \ar[r]_{\b_{j-1}} &\bullet \ar[ru]_{\b_{j}}
}
$$
A BGP-reflection of a quiver is a new quiver obtained by inverting all
arrows incident to a vertex which is a sink or a source.
Let $Q$ and $Q'$ be two quivers without oriented cycles.
Then the path algebras $kQ$ and $kQ'$ are derived equivalent if and
only if $Q'$ can be obtained from $Q$ by a sequence of BGP-reflections
(see \cite[Chapter \uppercase\expandafter{\romannumeral 1}, 5.7 ]{H}).

As a consequence, we obtain the following result, saying that $\tails S^G$ is determined by the triple $(\deg x, \deg y, |G|)$ up to derived equivalence.

\begin{theorem}
We have the following equivalences of triangulated categories
\begin{align*}
 \cD^b(\tails S^G)&\cong \cD^b(\mod kQ_{S, G}) \\
&\cong
\underbrace{ \cD^b(\mod k Q_{(\frac{m}{\ell}\deg x, \frac{m}{\ell}\deg y)}) \times \cdots \times \cD^b(\mod k Q_{(\frac{m}{\ell}\deg x, \frac{m}{\ell}\deg y)} )}
_{n \;\textnormal{times}}. 
\end{align*}
\end{theorem}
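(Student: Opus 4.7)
The first isomorphism $\cD^b(\tails S^G) \cong \cD^b(\mod kQ_{S,G})$ is essentially already in hand. By Theorem \ref{thm.4.5}, the group $G$ is ample for $S$, so Theorem \ref{thm.tilt} yields $\cD^b(\tails S^G) \cong \cD^b(\mod (\nabla S)*G)$. Since $\nabla S \cong kQ_S$ and, by Lemma \ref{lem.La}, $(\nabla S)*G \cong kQ_S *G$ is basic and hereditary, it is Morita (in fact, isomorphic) to the path algebra $kQ_{S,G}$ of the quiver described in Theorem \ref{lem.QSG}. This gives the first equivalence. So it remains to identify $\cD^b(\mod kQ_{S,G})$ with $n$ copies of $\cD^b(\mod kQ_{(a,b)})$, where $a = \frac{m}{\ell}\deg x$ and $b = \frac{m}{\ell}\deg y$.

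The second equivalence I would prove in two stages. \emph{Stage A (connected components):} Using the explicit description in Theorem \ref{lem.QSG}, the vertices of $Q_{S,G}$ are labelled by $(i,j) \in \ZZ/\ell\ZZ \times \ZZ/r\ZZ$, and every arrow takes $(i,j')$ to $(i\pm \deg x \text{ or } \deg y,\, j' \pm 1)$. Following arrows (in either direction) from $(0,0)$ traces out the cyclic subgroup generated by $(\deg x, 1)$ in $\ZZ/\ell\ZZ \times \ZZ/r\ZZ$. Since $\gcd(\deg x, \ell)=1$, this subgroup has order $\lcm(\ell,r)=m$, so there are $\ell r/m = \gcd(\ell,r)=n$ connected components, each of size $m$ and each isomorphic to the covering quiver $Q_S^{m/\ell}$ (as claimed in the paragraph preceding the theorem). \emph{Stage B (path algebra decomposition):} A disjoint union of quivers corresponds to a direct product of path algebras, so $kQ_{S,G} \cong \prod_{i=1}^n kQ_S^{m/\ell}$, and hence $\cD^b(\mod kQ_{S,G}) \cong \prod_{i=1}^n \cD^b(\mod kQ_S^{m/\ell})$.

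The remaining and \emph{main} step is to show that $kQ_S^{m/\ell}$ is derived equivalent to $kQ_{(a,b)}$. Both quivers are acyclic (no oriented cycles) and have underlying graph the cycle on $m$ vertices, i.e.\ type $\widetilde{A}_{m-1}$. I would verify that the two orientations agree in the key BGP-invariant data: counting arrows around the cycle, $Q_S^{m/\ell}$ has $(m/\ell)\deg y = b$ arrows in one direction (the $x$-arrows) and $(m/\ell)\deg x = a$ in the other (the $y$-arrows), while $Q_{(a,b)}$ by construction has precisely $a$ arrows on one path and $b$ on the other. Since any two acyclic orientations of an $\widetilde{A}$-type graph with matching clockwise/counterclockwise arrow counts are connected by a sequence of BGP reflections at sinks and sources, the classical theorem of Happel \cite[Chap.~I, 5.7]{H} gives the desired derived equivalence $\cD^b(\mod kQ_S^{m/\ell}) \cong \cD^b(\mod kQ_{(a,b)})$, and the statement follows. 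The main obstacle here is the combinatorial bookkeeping: one must carefully verify that the arrows produced by the rule of Theorem \ref{lem.QSG} indeed assemble into the cycle $Q_S^{m/\ell}$ with the correct orientation within each connected component, and that the explicit BGP sequence can be produced, but once the arrow-count invariant is checked both assertions reduce to routine manipulations.
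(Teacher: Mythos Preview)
Your proposal is correct and follows essentially the same route as the paper: the first equivalence is assembled from Theorem~\ref{thm.4.5}, Theorem~\ref{thm.tilt}, and Lemma~\ref{lem.La} exactly as you indicate; the decomposition $Q_{S,G}\cong\bigsqcup_n Q_S^{m/\ell}$ is used by the paper as well (stated there ``by observation'' just before the theorem, whereas you supply the covering/coset argument in $\ZZ/\ell\ZZ\times\ZZ/r\ZZ$); and the passage from $Q_S^{m/\ell}$ to $Q_{(\frac{m}{\ell}\deg x,\,\frac{m}{\ell}\deg y)}$ via BGP reflections using the clockwise/counterclockwise arrow count on $\widetilde A_{m-1}$ is precisely the paper's argument, which likewise cites \cite[Chap.~I, 5.7]{H}. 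Your Stage~A sketch is a bit loose---the $x$- and $y$-arrows in $Q_S$ do not literally realise steps by $(\deg x,1)$ in $\ZZ/\ell\ZZ\times\ZZ/r\ZZ$ since the first coordinate is not taken modulo $\ell$---but once one passes to the underlying $\ell$-cycle of $Q_S$ (as in the paper's renumbering) the monodromy computation you describe goes through and yields exactly $n=\gcd(\ell,r)$ components of size $m=\lcm(\ell,r)$.
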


\begin{proof}
Since $Q_S$ has $\deg y$ arrows labeled as ``$x$'' and $\deg x$ arrows labeled as ``$y$'',
the $\frac{m}{\ell}$-covering quiver $Q_{S}^{\frac{m}{\ell}}$ has $\frac{m}{\ell} \deg y$ arrows labeled as ``$x$'' and $\frac{m}{\ell} \deg x$ arrows labeled as ``$y$''.
So by a sequence of BGP-reflection from $Q_{S}^{\frac{m}{\ell}}$, we have the quiver $Q_{(\frac{m}{\ell}\deg x, \frac{m}{\ell}\deg y)}$.
Hence it follows that
\begin{align*}
\cD^b(\mod kQ_{S, G})
&\cong \underbrace{ \cD^b(\mod kQ_S ^{\frac{m}{\ell}}) \times \cdots \times \cD^b(\mod kQ_S^{\frac{m}{\ell}})}
_{n\;\textnormal{times}}\\
&\cong \underbrace{ \cD^b(\mod k Q_{(\frac{m}{\ell}\deg x, \frac{m}{\ell}\deg y)}) \times \cdots \times \cD^b(\mod k Q_{(\frac{m}{\ell}\deg x, \frac{m}{\ell}\deg y)})}
_{n\;\textnormal{times}}.
\end{align*}
\end{proof}

\begin{remark} Since $S^G$ is a noetherian AS-Gorenstein algebra, $\tails S^G$ is a noetherian $\Ext$-finite abelian category having no non-zero projective objects and having an object of infinite length.  Moreover, since $\gldim S*G=2$, we see that $\tails S^G\cong \tails S*G$ is hereditary and having a tilting object by Theorem \ref{thm.tilt}, so the structure of $\tails S^G$ is given in \cite[Theorem 1]{L}, \cite [Theorem 6.2]{RV}.  In particular, the connected component of $\tails S^G$ is equivalent to the category of coherent sheaves on a weighted projective line.  In fact, the path algebra $kQ_{(i, j)}$ is a canonical algebra, so 
the above theorem shows that $\tails S^G$ is a product of a weighted projective line 
up to derived equivalences.  
\end{remark}

\begin{example}[The case $\deg x=1,\deg y=3$ and $|G|=6$]
This is a continuation of Example \ref{ex.quiver}.
Since
$$Q_S^3 =
\xymatrix@C=1pc@R=1pc{
\bullet \ar[r]
&\bullet \ar[r]
&\bullet \ar[r]
&\bullet \ar@{<-}[r]
&\bullet \ar[r]
&\bullet \ar[r]
&\bullet \ar[r]
&\bullet \ar@{<-}[r]
&\bullet \ar[r]
&\bullet \ar[r]
&\bullet \ar[r]
&\bullet \ar@{<-}@/^0.5pc/@<0.5ex>[lllllllllll]
},$$
we have the quiver
$$
Q_{(3,9)}=
\xymatrix@C=1pc@R=0.3pc{
&&&\bullet \ar[rrr] &&&\bullet \ar[rrrd]
\\
\bullet \ar[rrru] \ar[rd] &&&&&&&&& \bullet
\\ 
&\bullet \ar[r] &\bullet \ar[r] &\bullet\ar[r] &\bullet\ar[r] &\bullet\ar[r] &\bullet\ar[r] &\bullet\ar[r] &\bullet  \ar[ru]
}$$
by a sequence of BGP-reflections from $Q_S^3$. Hence we obtain
\begin{align*}
\cD^b(\tails S^G) \cong \cD^b(\mod k Q_{S,G} ) \cong \cD^b(\mod kQ_{(3,9)}) \times \cD^b(\mod kQ_{(3,9)}).
\end{align*}
\end{example}


\begin{thebibliography}{99}


\bibitem{AIR} C. Amiot, O. Iyama and I. Reiten, Stable categories of
Cohen-Macaulay modules and cluster categories, preprint.




\bibitem{AZ}
M. Artin, and J. J. Zhang, Noncommutative projective schemes,
\emph{Adv. Math.} \textbf{109} (1994), 228--287.





\bibitem{CKWZ}
K. Chan, E. Kirkman, C. Walton, and J. J. Zhang, Quantum binary polyhedral groups and their actions on quantum planes, preprint.



\bibitem{H} D. Happel,
{\it Triangulated Categories in the Representation Theory of Finite-Dimensional Algebras},
London Mathematical Society Lecture Note Series 119, Cambridge University Press,
Cambridge, 1988.



\bibitem{IT}
O. Iyama and R. Takahashi, Tilting and cluster tilting for quotient
singularities,
{\it Math. Ann.} {\bf 356} (2013), 1065--1105.

\bibitem{IY}
O. Iyama and Y. Yoshino, Mutation in triangulated categories and
rigid Cohen-Macaulay modules, \emph{Invent. Math.} \textbf{172}
(2008), 117--168.

\bibitem{JiZ2}
N. Jing and J. J. Zhang,
Gorensteinness of invariant subrings of quantum algebras,
{\it J. Algebra} {\bf 221} (1999), 669--691.


\bibitem{JZ}
P. J\o rgensen and J. J. Zhang, Gourmet's guide to Gorensteinness,
\emph{Adv. Math.} \textbf{151} (2000), 313--345.


\bibitem{L}
H. Lenzing, Hereditary noetherian categories with a tilting complex, 
\emph{Proc. Amer. Math. Soc.} \textbf{125} (1997), 1893--1901. 



\bibitem{MR}
J. C. McConnell and J. C. Robson, \emph{Noncommutative Noetherian
Rings}, Wiley-Interscience, Chichester, 1987.


\bibitem{MM}
H. Minamoto and I. Mori, The structure of AS-Gorenstein algebras,
\emph{Adv. Math.} \textbf{226} (2011), 4061-4095.

\bibitem{Mfs}
S. Montgomery,
{\it Fixed rings of finite automorphism groups of associative rings},
Lecture Notes in Math., Vol. 818, Springer-Verlag, New York, 1980.



\bibitem{Mmc}
I. Mori, McKay type correspondence for AS-regular algebras, \emph{J.
London Math. Soc.}, \textbf{88} (2013), 97-117.




\bibitem{RR}
I. Reiten and C. Riedtmann,
Skew group algebras in the representation theory of Artin algebras,
{\it J. Algebra} {\bf 92} (1985), 224--282.

\bibitem{RRZ}
M. Reyes, D. Rogalski and J. J. Zhang, Skew Calabi-Yau algebras and
homological identities, preprint.

\bibitem{RV}
I. Reiten and M. Van den Bergh, 
Grothendieck groups and tilting objects, 
\emph{Algebr. Represent. Theory} \textbf{4} (2001), 1-21.


\bibitem{SZ} D. R. Stephenson and J. J. Zhang, Growth of graded noetherian
rings, \emph{Proc. Amer. Math. Soc.} \textbf{125} (1997),
1593--1605.

\bibitem{U1}
K. Ueda, Triangulated categories of Gorenstein cyclic quotient
singularities, \emph{Proc. Amer. Math. Soc.} \textbf{136} (2008),
2745--2747.


\bibitem{U}
K. Ueyama, Graded maximal Cohen-Macaulay modules over noncommutative
graded Gorenstein isolated singularities, \emph{J. Algebra} \textbf{383} (2013),
85--103.



\bibitem{Y}
Y. Yoshino, \emph{Cohen-Macaulay Modules over Cohen-Macaulay Rings},
London Mathematical Society Lecture Note Series 146, Cambridge
University Press, Cambridge, 1990.


\end{thebibliography}
\end{document}